\DeclareMathAlphabet{\mathcal}{OMS}{cmsy}{m}{n}
\newcommand{\dd}{\partial}
\newcommand{\eps}{\varepsilon}
\newcommand{\2}{{L^2\!\left(\mathbb{T}^3\right)}}
\newcommand{\osc}{\textnormal{osc}}
\newcommand{\dx}{\textnormal{d}{x}}
\newcommand{\dive}{\textnormal{div}\;}
\newcommand{\diveh}{\textnormal{div}_h\;}
\newcommand{\nhp}{\nabla_h^\perp}
\newcommand{\nh}{\nabla_h}
\newcommand{\Dh}{\Delta_h}
\newcommand{\ck}{\check{k}}
\newcommand{\cm}{\check{m}}
\newcommand{\cn}{\check{n}}
\newcommand{\Lplus}{\mathcal{L}\left(\frac{t}{\varepsilon}\right)}
\newcommand{\Lminus}{\mathcal{L}\left(-\frac{t}{\varepsilon}\right)}
\newcommand{\Hs}{{{H}^s\left( \T^3 \right)}}
\newcommand{\R}{\mathbb{R}}
\newcommand{\uh}{\overline{u}^h}
\newcommand{\Tq}{\triangle_{q'}}
\newcommand{\sumf}{\sum_{\left| q-q' \right|\leqslant4}}
\newcommand{\sumi}{\sum_{q'>q-4}}
\def\L2h{\mathop{{L^2\left( \T^2_h \right)}}}
\newcommand{\tq}{\triangle_q}
\newcommand{\Sq}{S_q}
\newcommand{\set}[1]{\left\lbrace #1 \right\rbrace}
\newcommand{\pare}[1]{\left( #1 \right)}
\newcommand{\bra}[1]{\left[ #1 \right]}
\newcommand{\RN}[1]{%
  \textup{\uppercase\expandafter{\romannumeral#1}}%
}
\newcommand{\av}[1]{\left| #1 \right|}
\newcommand{\norm}[1]{\left\| #1 \right\|}
\newcommand{\PA}{\mathbb{P}\mathcal{A}}
\newcommand{\cA}{\mathcal{A}}
\newcommand{\cC}{\mathcal{C}}
\newcommand{\cL}{\mathcal{L}}
\newcommand{\cB}{\mathcal{B}}
\newcommand{\cQ}{\mathcal{Q}}
\newcommand{\cF}{\mathcal{F}}
\newcommand{\cJ}{\mathcal{J}}
\newcommand{\cD}{\mathcal{D}}
\newcommand{\cE}{\mathcal{E}}
\newcommand{\cS}{\mathcal{S}}
\newcommand{\cR}{\mathcal{R}}
\renewcommand{\v}{\textnormal{v}}
\newcommand{\bP}{\mathbb{P}}
\newcommand{\bC}{\mathbb{C}}
\newcommand{\ps}[2]{\psca{ \left. #1 \ \right| \ #2 }}
\newcommand{\psc}[2]{\left\langle \left. #1 \ \right| \ #2 \right\rangle}
\newcommand{\hra}{\hookrightarrow}
\newcommand{\loc}{\textnormal{loc}}
\newcommand{\T}{\mathbb{T}}
\newcommand{\bZ}{\mathbb{Z}}
\newcommand{\bS}{\mathbb{S}}
\newcommand{\NS}{Navier-Stokes }
\newcommand{\uU}{\underline{U}}
\newcommand{\uu}{\underline{u}^h}
\newcommand{\bU}{\overline{U}}
\newcommand{\tU}{\widetilde{U}}
\newcommand{\psca}[1]{\left\langle#1\right\rangle}
\newcommand{\cFLinfty}{L^\infty\left(\mathbb{T}^3\right)}
\newcommand{\cFLtwo}{{L^2\left(\mathbb{T}^3\right)}}
\newcommand{\cFLp}{{L^p\left(\mathbb{T}^3\right)}}
\newcommand{\cFHs}{{{H}^s\left( \T^3 \right)}}
\theoremstyle{theorem}
\newtheorem{theorem}{Theorem}[section]
\newtheorem*{theorem*}{Theorem}
\newtheorem{prop}[theorem]{Proposition}
\newtheorem{lemma}[theorem]{Lemma}
\newtheorem{cor}[theorem]{Corollary}
\newtheorem{definition}[theorem]{Definition}
\theoremstyle{definition}
\newtheorem{rem}[theorem]{Remark}
\numberwithin{equation}{section}
\begin{document}

\title[Density-dependent, incompressible periodic fluids]{On the influence of gravity on density-dependent\\ incompressible periodic fluids}

\author[V.-S. Ngo \& S. Scrobogna]{Van-Sang Ngo \& Stefano Scrobogna}

\date{\today}

\address{\noindent \textsc{V.-S. Ngo,  Universit\'e de Rouen, Laboratoire de Math\'ematiques Rapha\"el Salem, UMR 6085 CNRS, 76801 Saint-Etienne du Rouvray, France}}
\email{van-sang.ngo@univ-rouen.fr}

\address{\noindent \textsc{S. Scrobogna, Basque Center for Applied Mathematics, Mazarredo, 14, E48009 Bilbao, Basque Country, Spain}}
\email{sscrobogna@bcamath.org}

\keywords{Incompressible fluids, stratified fluids,  parabolic systems, bootstrap}
\subjclass[2010]{35Q30,  35Q35, 37N10, 76D03, 76D50, 76M45, 76M55}

\begin{abstract}
	The present work is devoted to the analysis of density-dependent, incompressible fluids in a 3D torus, when the Froude number $\eps$ goes to zero. We consider the very general case where the initial data do not have a zero horizontal average, where we only have smoothing effect on the velocity but not on the density and where we can have resonant phenomena on the domain. We  explicitly determine the limit system when $\eps \to 0$ and prove its global wellposedness. Finally, we prove that for large initial data, the density-dependent, incompressible fluid system is globally wellposed, provided that $\eps$ is small enough.
\end{abstract}

\maketitle


\section{Introduction}

In this paper, we want to study the motion of an incompressible, inhomogeneous fluid whose density profile is considered to be a perturbation around a stable state, which is describe be the following system 
\begin{equation} \label{PBSe} \tag{PBS$_\varepsilon$}
	\left\lbrace
	\begin{aligned}
		&\partial_t v^\varepsilon  + v^\varepsilon \cdot\nabla v^\varepsilon  -\nu \Delta  v^\varepsilon -\displaystyle\frac{1}{\varepsilon} \rho^\varepsilon \overrightarrow{e}_3 &=& -\displaystyle \frac{1}{\varepsilon} \nabla \Phi^\varepsilon,\\
		&\partial_t \rho^\varepsilon + v^\varepsilon \cdot\nabla \rho^\varepsilon   + \displaystyle\frac{1}{\varepsilon} v^{3,\varepsilon} & =& \;0,\\
		&\dive v^\varepsilon =\;0,\\
		& \left. \left( v^\varepsilon, \rho^\varepsilon \right)\right|_{t=0}= \left( v_0, \rho_0 \right), 
	\end{aligned}
	\right.
\end{equation}
in the regime where the Froude number $ \varepsilon\to 0 $. Here, the vector field $v^\eps$ and the scalar function $\rho^\eps$ represent respectively the velocity and the density of the fluid and $\nu$ stands for the viscosity. For a more detailed discussion about the physical motivation and the derivation of the model, we refer to \cite{Scrobo_Froude_periodic},  \cite{Scrobo_Froude_FS}, \cite{Scrobo_thesis}. We also refer to the monographs \cite{cushman2011introduction} and \cite{Pedlosky87} for a much wider survey on geophysical models.

Let us give some brief comments about the system \eqref{PBSe}. In a nutshell, there are two forces which constrain the motion of a fluid on a geophysical scale: the Coriolis force and the gravitational stratification. The predominant influence of one force, the other, or both gives rise to substantially different dynamics.

The \emph{Coriolis force} (see \cite{SM89} for a detailed analysis of such force) is due to the rotation of the Earth around its axis and acts perpendicularly to the motion of the fluid. If the magnitude of the force is sufficiently large (when the rotation is fast or the scale is large for example), the Coriolis force ``penalizes'' the vertical dynamics of the fluid and makes it move in rigid columns (the so-called \emph{Taylor columns}). This tendency of a rotating fluid to displace in vertical homogeneous columns is generally known as \textit{Taylor-Proudmann theorem}, which was first derived by Sidney Samuel Hough (1870-1923), a mathematician at Cambridge in the work \cite{Hough1897}, but it was named after the works of G.I. Taylor \cite{Taylor1917} and Joseph
Proudman \cite{Proudman1916}. On a mathematical point of view, the Taylor-Proudman effect for homogeneous, fast rotating fluids is a rather well understood after the works \cite{BMN1}, \cite{BMNresonantdomains}, \cite{CDGG2}, \cite{Gallagher_schochet} and  \cite{grenierrotatingeriodic}. In such setting, we mention as well the work \cite{GallagherSaint-Raymondinhomogeneousrotating} in which, the authors consider an inhomogeneous rotation and the works \cite{FGG-VN}, \cite{FGN} and \cite{Scrobo_Ngo} in which fast rotation was considered simultaneously with weak compressibility.

Beside the rotation, one can consider a fluid which is inhomogeneous and whose density profile is a linearization around a stable state, we refer to \cite{Charve_thesis}, \cite{cushman2011introduction} and references therein for a thorough derivation of the model. In such situation, we can imagine that the rotation effects and stratification effects are equally relevant: the system describing such effect is known as \textit{primitive equations} (see \cite{Charve_thesis} and \cite{cushman2011introduction}). The primitive equations and their asymptotic dynamic as stratification and rotation tend to infinity at a comparable rate are as well rather well understood on a mathematical viewpoint: we refer to the works \cite{charve1}, \cite{charve2},  \cite{charve4}, \cite{charve3}, \cite{charve_ngo_primitive}, \cite{chemin_prob_antisym}, \cite{Gallagher_schochet}, \cite{Scrobo_primitive_horizontal_viscosity_periodic} and references therein.

\medskip

Now, we will briefly discuss the \emph{gravitational stratification} effect, which is the main physical phenomenon concerning the system \eqref{PBSe} for a inhomogeneous fluid, subjected to a gravitational force pointing downwards. Gravity force tends to lower the regions of the fluid with higher density and raise the regions with lower density, trying finally to dispose the fluid in horizontal stacks of vertically decreasing density. A fluid in such configuration (density profile which is a decreasing function of the variable $ x_3 $ only) is said to be in a \textit{configuration of equilibrium}.

Let hence consider a fluid in a configuration of equilibrium and let us imagine to raise a small parcel of the fluid with high density in a region of low density. Since such parcel is much heavier (in average) than the fluid surrounding it, the gravity force will induce a downwards motion. Such motion does not stop until the parcel reaches a layer whose density is comparable to its own, and inertially it will continue to move downwards until sufficient buoyancy is provided to invert the motion, due to Archimedes principle. This kind of perturbation of an equilibrium state induces hence a pulsating motion which is described by the linear application
\begin{equation} \label{eq:stratification_buoyancy}
	\pare{u^{1, \varepsilon}, u^{2, \varepsilon}, u^{3, \varepsilon}, \rho^{\varepsilon}} \mapsto \frac{1}{\varepsilon} \pare{0, 0, -\rho^{\varepsilon}, u^{3, \varepsilon}}, 
\end{equation}
which appears in \eqref{PBSe}. The application \eqref{eq:stratification_buoyancy} is called \textit{stratification buoyancy} and we will base our analysis on the dispersive effects induced by such perturbation.

To the best of our knowledge, there are not many results concerning the effects of the stratification buoyancy. In \cite{embid_majda2}, there was a first attempt to perform a multiscale analysis when Rossby and Froude number are in different regimes, while in \cite{Scrobo_Froude_FS} and \cite{Widmayer_Boussinesq_perturbation}, the authors studied the convergence and stability of solutions of \eqref{PBSe} when the Froude number $ \varepsilon\to 0 $ in the whole space $ \mathbb{R}^3 $. In \cite{Scrobo_Froude_periodic} the system \eqref{PBSe} is studied in nonresonant domains when the initial data has zero horizontal average.

\medskip

In this paper, the unknowns $\pare{ v^\varepsilon, \rho^\varepsilon }$ are considered to be functions in the variables $\pare{ x, t }\in \T^3\times \R_+ $ being the space domain $\T^3$ the three-dimensional periodic box
\begin{equation*}
\T^3 = \prod_{i=1}^3 \R \left/ a_i \mathbb{Z} \right. , \hspace{1cm} a_i \in \R.
\end{equation*}
Compared to \cite{Scrobo_Froude_periodic}, we consider the much more general case with the following additional difficulties
\begin{enumerate}
	\item \label{enumerate:punto1} Initial data are considered with generic horizontal average. This point seem marginal, but as showed in this paper, the dynamics induced by initial data with nonzero horizontal average create additional vertical gravitational perturbations, the control of which is highly non-trivial (see as well \cite{GS3}).
	\item \label{enumerate:punto2} Generic space domain may present resonant effects.
	\item \label{enumerate:punto3} Density profiles are only transported and do not satisfy a transport-diffusion equation and so do not possess smoothing effects.
\end{enumerate}

\noindent From now on we rewrite the system \eqref{PBSe} in the following more compact form
\begin{equation} \tag{\ref{PBSe}}
	\left\lbrace
	\begin{aligned}
		& {\partial_t V^\varepsilon}+ v^\varepsilon \cdot \nabla V^\varepsilon - \cA_2\pare{D}V^\varepsilon + \frac{1}{\varepsilon}\cA V^\varepsilon = -\frac{1}{\varepsilon} \left( \begin{array}{c} \nabla \Phi^\varepsilon\\ 0 \end{array} \right),\\
		& V^\varepsilon=\left( v^\varepsilon,\theta^\varepsilon \right),\\
		&\dive v^\varepsilon=0,\\
		 & \left. V^\varepsilon \right|_{t=0}= V_0,
	\end{aligned}
	\right.
\end{equation}
 where
\begin{align}\label{matrici}
\mathcal{A}= & \left( \begin{array}{cccc}
0&0&0&0\\
0&0&0&0\\
0&0&0&1\\
0&0&-1&0\\
\end{array} \right),
&
\cA_2\pare{D} =& \left( \begin{array}{cccc}
\nu\Delta&0&0&0\\
0&\nu\Delta&0&0\\
0&0&\nu\Delta&0\\
0&0&0&0\\
\end{array} \right).
\end{align}
The additional difficulties \eqref{enumerate:punto1}--\eqref{enumerate:punto3} listed above are the main difficulties in the present work and they modify significantly the dynamic of \eqref{PBSe} compared to the results proved in \cite{Scrobo_Froude_periodic}, as already mentioned. Let us hence start describing the effects induced by the hypothesis made in the point \ref{enumerate:punto1}: we will see in the following that the dynamics of the solutions of \eqref{PBSe} in the limit regime $ \varepsilon \to 0 $ is essentially governed by the effects of the outer force $ \varepsilon^{-1}\cA V^{\varepsilon} $.

\subsection{A survey on the notation adopted.}\label{sec:notation_and_results}

All along this note we consider real valued vector fields, i.e. applications $ V:\R_+\times \T^3 \to  \R^4 $. We will often associate to a vector field $V$ the vector field $v$ which  shall be simply the projection on the first three components of $V$. The vector fields considered are periodic in all their directions and they have zero global average $ \int_{\T^3} V \dx=0 $, which is equivalent to  assume that the first Fourier coefficient $ \hat{V} \left( 0 \right)=0 $. We remark that the zero average propriety stated above is preserved in time $t$ for both \NS\ equations as well as for the system \eqref{PBSe}.\\
Let us define the Sobolev space $ {\cFHs} $, which consists in all the tempered distributions $ u $ such that
\begin{equation}
\label{eq:non-hmogeneous_Sobolev_norms}
\left\| u \right\|_{\cFHs}= \left( \sum_{n\in\mathbb{Z}^3}\left(1+ \left| \cn \right|^{2} \right)^{s}\left| \hat{u}_n \right|^2 \right)^{1/2}<\infty.
\end{equation}
Since we shall consider always vector fields whose average is null the Sobolev norm defined above in particular is equivalent to the following semi-norm
$$
\left\| \left( -\Delta \right)^{s/2} u \right\|_{\cFLtwo} \sim \left\| u \right\|_{\cFHs}, \hspace{1cm}s\in\R,
$$
which appears naturally in parabolic problems.\\
Let us define the operator $\mathbb{P}$ as the three dimensional Leray operator $\mathbb{P}^{(3)}$ wich leaves untouched the fourth component, i.e.
\begin{equation}
	\label{eq:newLeray} \mathbb{P}= \left(
	\begin{array}{c|c}
	1-{\Delta^{-1}}{\partial_i\partial_j}& 0\\  \hline
	0 & 1
	\end{array}\right)_{i,j=1,2,3}= \left( \begin{array}{c|c}
	\mathbb{P}^{(3)} & 0 \\ \hline
	0 & 1
	\end{array} \right).
\end{equation}
The operator $ \mathbb{P} $ is  a pseudo-differential operator, in the Fourier space its symbol is
\begin{equation}
\label{Leray projector}
\mathbb{P}_n= \left(
\begin{array}{c|c}
\delta_{i,j}-\dfrac{\check{n}_i\; \check{n}_j}{ \left| \check{n} \right|^2}& 0\\[4mm]  \hline 
0 & 1
\end{array}\right)_{i,j=1,2,3},
\end{equation}
where $ \delta_{i,j} $ is Kronecker's delta and $ \check{n}_i=n_i/a_i, \left| \check{n} \right|^2= \sum_i \check{n}_i^2$.\\

\subsection{Results}

Being the operator $\cA$ skew-symmetric it is possible to apply energy methods to the system \eqref{PBSe} in the same fashion as it is done in \cite[Chapter 4]{BCD} for quasilinear symmetric hyperbolic systems. Being this the case we can deduce the following local existence result

\begin{theorem}\label{thm:local}
Let $V_0\in \Hs$ where $s>3/2$, there exist a $T^\star >0$ such that for every $T\in \left[0, T^\star \right)$ the system \eqref{PBSe} admits a unique solution in the energy space
\begin{equation*}
\cC \pare{  \left[0, T \right]; \Hs }\cap \cC^1 \pare{  \left[0, T \right]; H^{s-1}\pare{\T^3} }.
\end{equation*}
Moreover there exist a positive constant $ c $ such that
\begin{equation*}
T > \frac{c}{\norm{V_0}_{\Hs}},
\end{equation*}
and the maximal time of existence $ T^\star $ is independent of $ \varepsilon $ and $ s $ and, if $ T^\star < \infty $, then
\begin{equation}\label{eq:BU_criterion}
\int_0^{T^\star} \norm{\nabla U^\varepsilon \pare{t}}_{L^\infty \pare{\T^3}}\textnormal{d} t < \infty.
\end{equation}
\end{theorem}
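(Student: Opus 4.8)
\textit{Proof proposal.} I would treat \eqref{PBSe}, after eliminating the pressure with the modified Leray projector $\mathbb{P}$, as a quasilinear transport problem carrying two favourable structures — $\cA$ is a constant skew-symmetric matrix, and the first three components enjoy a parabolic regularization — and run the classical programme: approximation, uniform energy estimates, compactness, uniqueness.

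\textbf{Step 1: reduction and algebraic facts.} Since $\mathbb{P}$ is an orthogonal projector that commutes with $\cA_2\pare{D}$ and with every Fourier multiplier, and since $\dive v^\varepsilon=0$ is propagated, applying $\mathbb{P}$ kills the term $\varepsilon^{-1}\pare{\nabla\Phi^\varepsilon,0}$ and turns \eqref{PBSe} into
\begin{equation*}
\partial_t V^\varepsilon+\mathbb{P}\pare{v^\varepsilon\cdot\nabla V^\varepsilon}-\cA_2\pare{D}V^\varepsilon+\tfrac1\varepsilon\mathbb{P}\cA V^\varepsilon=0,\qquad V^\varepsilon|_{t=0}=V_0.
\end{equation*}
The two facts I would use repeatedly are: for every $W$ with divergence-free first components and zero average, $\big\langle\mathbb{P}\cA W,W\big\rangle_{\Hs}=\big\langle\cA(-\Delta)^{s/2}W,(-\Delta)^{s/2}W\big\rangle_{\cFLtwo}=0$, because $\cA$ is skew-symmetric and commutes with $(-\Delta)^{s/2}$; and $-\big\langle\cA_2\pare{D}W,W\big\rangle_{\Hs}=\nu\norm{\nabla v}_{\Hs}^2\geqslant0$. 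The first is what makes every estimate uniform in $\varepsilon$, and it only works on the full vector $V^\varepsilon$; the second is the parabolic gain on the velocity, which is indispensable because the density component is merely transported.

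\textbf{Step 2: approximation and uniform estimates.} I would construct approximate solutions by a Friedrichs scheme: with $J_k$ the spectral truncation onto frequencies $\av{\cn}\leqslant k$, solve $\partial_t V_k=J_k\big(-\mathbb{P}(J_k v_k\cdot\nabla J_k V_k)+\cA_2\pare{D}J_k V_k-\tfrac1\varepsilon\mathbb{P}\cA J_k V_k\big)$ with $V_k|_{t=0}=J_k V_0$; by Cauchy--Lipschitz this has a solution which keeps $\dive v_k=0$ and $\widehat{V_k}(0)=0$. Applying $(-\Delta)^{s/2}$ and using Step 1 the singular term disappears, leaving
\begin{equation*}
\tfrac12\tfrac{d}{dt}\norm{V_k}_{\Hs}^2+\nu\norm{\nabla v_k}_{\Hs}^2\lesssim\av{\big\langle[(-\Delta)^{s/2},v_k\cdot\nabla]V_k,(-\Delta)^{s/2}V_k\big\rangle}.
\end{equation*}
The commutator is decomposed à la Bony. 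The pieces in which the density loses no derivative are bounded by $\norm{\nabla v_k}_{\Linfty}\norm{V_k}_{\Hs}^2$ in the usual way; the delicate pieces, of the type $T_{\nabla\theta_k}v_k$ and $R(v_k,\nabla\theta_k)$, are estimated for $s>3/2$ by $\norm{\theta_k}_{\Hs}\norm{v_k}_{H^{s+1}\pare{\T^3}}$, using $H^{s+1}\pare{\T^3}\hookrightarrow W^{1,\infty}\pare{\T^3}\cap H^{5/2}\pare{\T^3}$, and then $\norm{v_k}_{H^{s+1}\pare{\T^3}}$ is absorbed into $\nu\norm{\nabla v_k}_{\Hs}^2$ by Young's inequality. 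This yields a closed inequality $\tfrac{d}{dt}\norm{V_k}_{\Hs}^2+\nu\norm{\nabla v_k}_{\Hs}^2\leqslant C\,\Psi\pare{\norm{V_k}_{\Hs}}$ with $\Psi$ polynomial and $C$ independent of $k$, $\varepsilon$ and $s$, hence a lower bound on the existence time of the claimed form $c/\norm{V_0}_{\Hs}$, together with uniform bounds in $\cC\pare{[0,T];\Hs}$, in $L^2\pare{[0,T];H^{s+1}\pare{\T^3}}$ for the velocity, and in $\cC^1\pare{[0,T];H^{s-1}\pare{\T^3}}$ via the equation.

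\textbf{Step 3: limit, uniqueness, blow-up.} The uniform bounds and a bound on $\partial_t V_k$ in $L^2\pare{[0,T];H^{s-1}\pare{\T^3}}$ give, by Aubin--Lions, strong convergence of a subsequence in $\cC\pare{[0,T];H^{s'}\pare{\T^3}}$ for every $s'<s$, enough to pass to the limit in all nonlinear terms; weak-$\ast$ compactness plus the Step 2 estimate place the limit $V^\varepsilon$ in $\cC\pare{[0,T];\Hs}\cap\cC^1\pare{[0,T];H^{s-1}\pare{\T^3}}$, time continuity into $\Hs$ being recovered by the standard Bona--Smith/energy argument. For uniqueness, the difference $W=(w,\theta_W)$ of two solutions solves a linear system; its $L^2$ estimate again loses the singular term by skew-symmetry, and the only dangerous term $\langle w\cdot\nabla\theta_2,\theta_W\rangle$ is controlled by $\norm{w}_{L^6\pare{\T^3}}\norm{\nabla\theta_2}_{L^3\pare{\T^3}}\norm{\theta_W}_{\cFLtwo}$ and absorbed into $\nu\norm{\nabla w}_{\cFLtwo}^2$, so Gronwall with the $L^1_{\loc}$ weight $\norm{\nabla v_2}_{\Linfty}+\norm{\theta_2}_{\Hs}^2$ forces $W\equiv0$. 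Finally, the Gronwall form of the Step 2 estimate shows $\norm{V^\varepsilon(t)}_{\Hs}$ is bounded by a function of $\norm{V_0}_{\Hs}$, $\nu$ and $\int_0^t\norm{\nabla V^\varepsilon}_{\Linfty}$ only, so if $T^\star<\infty$ then \eqref{eq:BU_criterion} must fail; since the controlling quantity does not see $s$, $T^\star$ is the same for all admissible $s$.

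\textbf{Main obstacle.} The real difficulty is closing the $\Hs$ estimate at the low threshold $s>3/2$: because the density carries no smoothing, the transport term $v^\varepsilon\cdot\nabla\theta^\varepsilon$ naively loses derivatives, and the only way out is to spend the parabolic gain $v^\varepsilon\in L^2_tH^{s+1}\pare{\T^3}$ on those pieces — while keeping the whole estimate uniform in $\varepsilon$, which forbids decoupling the $v^\varepsilon$ and $\theta^\varepsilon$ estimates, since the cancellation of the $\varepsilon^{-1}\cA$ term only occurs on the full unknown $V^\varepsilon$.
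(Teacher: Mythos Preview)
The paper does not prove this theorem; it simply invokes the energy method for quasilinear symmetric hyperbolic systems from \cite[Chapter~4]{BCD}, relying on the skew-symmetry of $\cA$ for $\varepsilon$-independence. Your proposal is exactly that programme---Friedrichs approximation, $\Hs$-energy estimate with the singular term vanishing by skew-symmetry, Aubin--Lions compactness, $L^2$ stability for uniqueness, Gronwall blow-up criterion---so your approach coincides with what the paper invokes.

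You in fact go further than the bare hyperbolic reference: \cite[Chapter~4]{BCD} on its own requires $s>d/2+1=5/2$, whereas you correctly observe that the partial parabolic smoothing on $v^\varepsilon$ lets one reach $s>3/2$, by estimating the paraproduct pieces $T_{\nabla\theta}v$ and $R(v,\nabla\theta)$ in $\Hs$ by $\norm{\theta}_{\Hs}\norm{v}_{H^{s+1}\pare{\T^3}}$ and absorbing $\norm{\nabla v}_{\Hs}$ into the dissipation. Two small caveats. First, after the Young absorption your closed inequality is $\tfrac{d}{dt}\norm{V}_{\Hs}^2\leqslant C_\nu\norm{V}_{\Hs}^4$, which yields $T\gtrsim c_\nu\norm{V_0}_{\Hs}^{-2}$ rather than the power $-1$ written in the statement (the latter is what the purely hyperbolic estimate gives for $s>5/2$). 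Second, for the blow-up criterion you should appeal to the direct Kato--Ponce commutator bound, which produces the Gronwall weight $\norm{\nabla V^\varepsilon}_{\Linfty}$, rather than to ``the Gronwall form of the Step~2 estimate'': the Step~2 weight after absorption is $\norm{V^\varepsilon}_{\Hs}^{2}$, and using that would make the continuation argument circular. Neither point threatens the overall scheme.
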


From now on we adopt the following notation: let us consider a vector field $A$, we denote as $\underline{A}= \underline{A} \pare{ x_3 } $ the horizontal average of $A$ defined as
\begin{equation*}
\underline{A} \pare{ x_3 } = \frac{1}{4\pi^2 a_1a_2} \int_{\T^2_h} A \pare{ y_h, x_3  } \textnormal{d} y_h,
\end{equation*} and by $\tilde{A}=\tilde{A}\pare{ x_h, x_3 }$ the horizontal oscillation of $A$ defined as
\begin{equation}\label{eq:bar-tilde_dec}
\tilde{A}\pare{ x_h, x_3 }= A \pare{ x_h, x_3 } - \underline{A}\pare{ x_3 }.
\end{equation}

Let us now define the operator 
\begin{equation}\label{eq:def_L}
\cL \pare{ \tau }= e^{-\tau \PA},
\end{equation}
where the matrices $\bP$ and $\cA$ are defined respectively in \eqref{Leray projector} and \eqref{matrici}, the result we prove is the following;

\begin{theorem}\label{thm:main_result}
Let $V_0$ be in $\Hs$ for $s>9/2 \pare{=\frac{d}{2}+3}$. Let us define 
\begin{align*}
\underline{V_0} & = \frac{1}{2\pi^2 a_1 a_2} \int_{\T^2_h} V_0\pare{y_h, x_3}\textnormal{d} y_h, \\
\uh_0 & = \pare{\begin{array}{c}
-\partial_2\\ \partial_1
\end{array} } \pare{-\Dh}^{-1} \pare{-\partial_2 V_0^1+\partial_1 V_0^2}, \\
\bU_0 & = \pare{\uh_0, 0, 0}^\intercal, \\
U_{\osc, 0} & = V_0 - \bU_0 - \underline{V_0}.
\end{align*}
 There exists a $ \varepsilon_0 > 0 $ such that for each $ \varepsilon\in \pare{0, \varepsilon_0} $ 
\begin{equation*}
V^\varepsilon = \underline{U} + \bar{U} + \Lplus U_{\osc} + o_\varepsilon \pare{ 1 }\hspace{1cm} \text{in } \ \cC_{\loc} \pare{ \R_+; H^{s-2}\pare{\T^3}},
\end{equation*}
where $\underline{U}= \pare{ \uu, 0, \uU^4 }= \pare{ \uU^1, \uU^2, 0, \uU^4 } $, $ \bar{U}= \pare{ \uh, 0, 0 } = \pare{ \bar{u}^1, \bar{u}^2, 0, 0 }$ and $U_{\osc}$ solve the systems
\begin{align}\label{eq:Uunder_heat2D}
&\left\lbrace
\begin{aligned}
& \partial_t \uu \left( x_3, t \right) - \nu \partial_3 ^2 \uu \left( x_3 , t \right) =0,\\
&  \underline{U}^4 \pare{ x_3, t } = \underline{V_0^4},\\
& \left. \uu \right|_{t=0}= \underline{V_0^h},\\
& \left. \underline{U}^4 \right|_{t=0}= \underline{V_0^4},
\end{aligned}
\right.
\\
&\label{eq:Uover_2DstratNS}
\left\lbrace
\begin{aligned}
&
\begin{multlined}
 \partial_t \uh \left( t,  x_h, x_3 \right) + \uh\left( t,  x_h, x_3 \right) \cdot \nh \uh \left( t,  x_h, x_3 \right) + \uu \left( t, x_3 \right)\cdot \nh \uh \left( t, x_h, x_3 \right)
\\ 
  - \nu \Delta \uh\left( t,  x_h, x_3 \right) = -\nh \bar{p} \left( t,  x_h, x_3 \right)
\end{multlined}  
  \\
   & \diveh \uh \left( x_h, x_3 \right) =0,\\
& \left. \uh\left( t,  x_h, x_3 \right) \right|_{t=0}=\uh_0\left(   x_h, x_3 \right).
\end{aligned}
\right.
\\
&\label{eq:lim_Uosc}
\left\lbrace
\begin{aligned}
&\partial_t U_{\osc} +\widetilde{\mathcal{Q}}_1\left(U_{\osc} +2 \bar{U}, U_{\osc} \right) +\mathcal{B} \left( \uU, U_{\osc} \right) -  \nu \Delta U_{\osc} =0,\\
& \dive U_{\osc}=0, \\
&\left. U_{\osc} \right|_{t=0}=U_{{\osc}, 0}.
\end{aligned}
\right.
\end{align}
Where $\cB$ and $\cQ$ are specific localizations of the transport form, whose explicit expression is given by the equations \eqref{eq:tQ2_e+-} and \eqref{eq:limit_cQ1} and are here omitted for the sake of clarity. 
\end{theorem}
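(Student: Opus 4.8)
\medskip
\noindent\textbf{Proof strategy.}
The plan is to treat \eqref{PBSe} as a fast singular perturbation: we filter out the stiff term $\tfrac1\varepsilon\cA V^\varepsilon$ by means of the group $\cL$, we identify and solve globally the limit system \eqref{eq:Uunder_heat2D}--\eqref{eq:lim_Uosc}, and we then run a continuation/bootstrap argument based on the blow-up criterion \eqref{eq:BU_criterion} to show that, for $\varepsilon$ small enough, $V^\varepsilon$ is global in time and stays $o_\varepsilon(1)$-close in $H^{s-2}\pare{\T^3}$ to the profile $\underline U+\bar U+\Lplus U_{\osc}$.

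First we apply the projector $\bP$ (which fixes $V^\varepsilon$, since $\dive v^\varepsilon=0$, and removes $\nabla\Phi^\varepsilon$) and set $U^\varepsilon(t)=\Lminus V^\varepsilon(t)$. Since $\PA$ is skew-adjoint on zero-average divergence-free fields, $\cL$ is unitary there and, after conjugation, the stiff term cancels, leaving
\[
\partial_t U^\varepsilon-\Lminus\,\cA_2\pare{D}\,\Lplus\,U^\varepsilon+\Lminus\,\bP\!\left(v^\varepsilon\cdot\nabla\,\Lplus U^\varepsilon\right)=0.
\]
Two features distinguish this from the homogeneous setting. First, the conjugated viscosity $\Lminus\,\cA_2\pare{D}\,\Lplus$ is \emph{not} $\cA_2\pare{D}$, because $\cA_2\pare{D}=\nu\Delta\cdot\mathrm{diag}(1,1,1,0)$ leaves the density untouched while $\PA$ rotates the vertical velocity into the density; it equals a nontrivial time-average (which reappears as the dissipation in \eqref{eq:lim_Uosc}) plus a uniformly bounded, zero-mean, time-oscillating operator carrying a full Laplacian. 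Second, we split $U^\varepsilon$ along the spectral subspaces of $\PA$: its kernel, which decomposes into the $x_3$-only horizontal average $\underline U$ and the $x_h$-oscillating, $x_3$-parametrised geostrophic field $\bar U=\pare{\uh,0,0}$, together with the two wave subspaces associated with the nonzero eigenvalues $\pm i\,|\check{n}_h|/|\check{n}|$, which carry $U_{\osc}$. Expanding the nonlinearity and the viscous correction in Fourier series, every triad interaction carries a phase $e^{i(t/\varepsilon)\sigma}$ with $\sigma$ a sum of eigenvalues; integration by parts in time turns the non-resonant interactions ($\sigma\neq0$), after truncating the frequencies at a level $N=N(\varepsilon)$ tending slowly to $+\infty$ so as to keep the small divisors $|\sigma|$ bounded away from zero (on a generic torus there is no uniform lower bound), into an $o_\varepsilon(1)$ remainder — but only in a norm two derivatives below the initial regularity, since $\partial_t U^\varepsilon$ reintroduces a Laplacian. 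The resonant and time-averaged contributions ($\sigma=0$) are, by construction, the operators $\widetilde{\mathcal{Q}}_1$, $\cB$, the dissipation in \eqref{eq:lim_Uosc}, and the slow self-interactions reducing to \eqref{eq:Uunder_heat2D}--\eqref{eq:Uover_2DstratNS}; verifying that the resonant average of the $\widetilde{v}^3\,\widetilde{v}^h$- and $\widetilde{v}^3\,\widetilde{\theta}$-interactions vanishes — so that $\uu$ obeys a pure heat equation and $\underline U^4$ stays frozen at $\underline{V_0^4}$ — is the combinatorial heart of point \ref{enumerate:punto1}.

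Next we prove global wellposedness of the limit system. Equation \eqref{eq:Uunder_heat2D} is a one-dimensional heat equation, so $\underline U\in\cC\pare{\R_+;\Hs}$ and $\partial_3\uu$ is bounded on $\R_+\times\T^3$; equation \eqref{eq:Uover_2DstratNS} is a two-dimensional Navier--Stokes system with $x_3$ a parameter, perturbed by the linear drift $\uu\cdot\nh\uh$, hence globally wellposed uniformly in $x_3$ by the 2D theory together with the bounds on $\uu$; and \eqref{eq:lim_Uosc} is, once $\bar U$ and $\underline U$ are known, a quadratic parabolic equation for $U_{\osc}$ whose crucial structural feature is that $\widetilde{\mathcal{Q}}_1$ and $\cB$, being resonant localizations of the divergence-free transport form, still satisfy the antisymmetries $\left\langle\widetilde{\mathcal{Q}}_1(a,b),\,b\right\rangle_{\cFLtwo}=0$ and $\left\langle\cB(\underline U,b),\,b\right\rangle_{\cFLtwo}=0$; this yields an a priori $L^2$ bound, and then a standard $\Hs$ energy estimate — treating the $2\bar U$-term as a linear forcing with already controlled coefficients — gives global $\Hs$ regularity of $U_{\osc}$. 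With the profile $V^\varepsilon_{\mathrm{app}}=\underline U+\bar U+\Lplus U_{\osc}$ in hand, we plug it into \eqref{PBSe} and check that the residual $\mathcal{R}^\varepsilon_{\mathrm{app}}$ is $o_\varepsilon(1)$ in $\cC_{\loc}\pare{\R_+;H^{s-2}\pare{\T^3}}$: the resonant and time-averaged contributions cancel by construction, while the non-resonant interactions and the oscillating part of the viscous correction are $o_\varepsilon(1)$ by the truncation-and-integration-by-parts above. Finally, setting $\delta^\varepsilon=V^\varepsilon-V^\varepsilon_{\mathrm{app}}$, an $H^{s-2}$ energy estimate gives
\[
\frac{d}{dt}\norm{\delta^\varepsilon}_{H^{s-2}\pare{\T^3}}^2\lesssim\left(\norm{\nabla V^\varepsilon}_{\Linfty}+\norm{\nabla V^\varepsilon_{\mathrm{app}}}_{\Linfty}\right)\norm{\delta^\varepsilon}_{H^{s-2}\pare{\T^3}}^2+\norm{\mathcal{R}^\varepsilon_{\mathrm{app}}}_{H^{s-2}\pare{\T^3}}\norm{\delta^\varepsilon}_{H^{s-2}\pare{\T^3}};
\]
since $s-2>5/2$ we have $H^{s-2}\pare{\T^3}\hra W^{1,\infty}\pare{\T^3}$, so on the time set where $\norm{\delta^\varepsilon}_{H^{s-2}\pare{\T^3}}\leqslant1$ the quantity $\norm{\nabla V^\varepsilon}_{\Linfty}$ is controlled by the global bound on $V^\varepsilon_{\mathrm{app}}$, and Gr\"onwall's lemma forces $\norm{\delta^\varepsilon}_{H^{s-2}\pare{\T^3}}\lesssim\norm{\mathcal{R}^\varepsilon_{\mathrm{app}}}\to0$ on every bounded time interval. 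A continuity argument combined with \eqref{eq:BU_criterion} then upgrades this to $T^\star=+\infty$ for $\varepsilon<\varepsilon_0$ and to the stated asymptotics.

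I expect the main obstacle to lie in two interlocking places. The structurally deepest one is point \ref{enumerate:punto3}: the density component of \eqref{PBSe} carries no dissipation at all, so transport-type a priori estimates in $\Hs$ must be used throughout; worse, after conjugation the viscosity acquires a zero-mean, full-Laplacian, non-smoothing remainder whose removal by time-integration-by-parts costs two derivatives — this is the genuine origin of the $\Hs\to H^{s-2}$ loss in the statement. The second obstacle, tied to points \ref{enumerate:punto1}--\ref{enumerate:punto2}, is the resonance analysis on a general, possibly resonant, torus with nonzero horizontal average: one must handle the small divisors via a carefully tuned frequency truncation, identify precisely the surviving resonant forms $\widetilde{\mathcal{Q}}_1$ and $\cB$ — in particular the new couplings between $\underline U$ and $U_{\osc}$ forced by the nonzero average — and prove their antisymmetry, without which the global bound for \eqref{eq:lim_Uosc} would fail.
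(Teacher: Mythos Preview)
Your overall architecture matches the paper's: filter by $\cL$, identify the limit via non-stationary phase / Schochet's method, prove the limit system is globally wellposed, and bootstrap via the blow-up criterion. The convergence step and the treatment of \eqref{eq:Uunder_heat2D}--\eqref{eq:Uover_2DstratNS} are essentially what the paper does (the paper packages the oscillating-remainder argument into Gallagher's abstract Lemma on $(p,\sigma)$--oscillating functions rather than doing the truncation by hand, but this is the same mechanism).

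There is, however, a genuine gap in your global wellposedness argument for \eqref{eq:lim_Uosc}. You write that the antisymmetry $\langle\widetilde{\cQ}_1(a,b),b\rangle_{\cFLtwo}=0$ gives an a priori $L^2$ bound, ``and then a standard $H^s$ energy estimate \dots\ gives global $H^s$ regularity of $U_{\osc}$.'' But \eqref{eq:lim_Uosc} is a genuinely three-dimensional quadratic parabolic equation; an $L^2$ bound together with a standard product/commutator estimate on $\widetilde{\cQ}_1(U_{\osc},U_{\osc})$ in $H^s$ only yields something like $\tfrac{d}{dt}\norm{U_{\osc}}_{\Hs}^2 \lesssim \norm{\nabla U_{\osc}}_{L^\infty}\norm{U_{\osc}}_{\Hs}^2$, which does \emph{not} close globally --- this is exactly the 3D Navier--Stokes obstruction. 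What actually saves the day (this is Remark~\ref{rem:smoothness_bilinear_Fourier} and the technical core of the paper's Section~6) is a structural property of the resonant set $\mathcal K^\star$ on which $\pare{\widetilde{\cQ}_1(U_{\osc},U_{\osc})}_{\osc}$ is Fourier-supported: for fixed $(k_h,n)$, the resonance condition $\omega^a(k)+\omega^b(n-k)=\omega^c(n)$ is a polynomial equation in $k_3$ of degree at most eight, so each fiber $\set{k_3:(k,n)\in\mathcal K^\star}$ has at most eight elements. This finite-fiber property yields a genuinely improved trilinear estimate,
\[
\av{\psca{\widetilde{\cQ}_1(U_{\osc},U_{\osc})\,\big|\,U_{\osc}}_{\Hs}}\ \leqslant\ C\,\norm{U_{\osc}}_{\cFLtwo}^{1/2}\norm{\nabla U_{\osc}}_{\cFLtwo}^{1/2}\norm{U_{\osc}}_{\Hs}^{1/2}\norm{\nabla U_{\osc}}_{\Hs}^{3/2},
\]
which after Young and Gronwall closes against the dissipation because $\int_0^T\norm{U_{\osc}}_{\cFLtwo}^2\norm{\nabla U_{\osc}}_{\cFLtwo}^2\,dt$ is controlled by the $L^2$ energy bound. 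Without this resonant smoothing your scheme stalls precisely where 3D Navier--Stokes does; you should identify it explicitly as the mechanism that makes \eqref{eq:lim_Uosc} globally solvable.
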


\begin{rem} \label{rem:propagation_parabolicity}
It is interesting to remark that, despite  $V^\varepsilon$ is the solution of the parabolic-hyperbolic system \eqref{PBSe},  it is  well defined in the space $\cC \pare{ [0,T]; H^s }$ for $s>9/2$ and $ T>0 $. This improvement of regularity is known as propagation of parabolicity: such name is motivated by the fact that the limit equations \eqref{eq:Uover_2DstratNS} and \eqref{eq:lim_Uosc} are strictly parabolic and we can prove they are globally well posed in some suitable energy space of subcritical regularity. A similar phenomenon takes place as well in the study of the incompressible limit for weakly compressible fluids, we refer the reader to the works \cite{DanchinCompressible} and \cite{Gallagher_incompressible_limit}.
\end{rem}

\begin{rem}\label{rem:smoothness_bilinear_Fourier}
Equation \eqref{eq:lim_Uosc} is a nonlinear three-dimensional parabolic equation. The nonlinearity $\widetilde{\cQ}_1$ is a modified, symmetric transport form which assumes the following explicit form
\begin{equation*}
\widetilde{\cQ}_1 \pare{ A, B } = \frac{1}{2} \chi \pare{ D } \pare{ A\cdot \nabla B + B \cdot \nabla A },
\end{equation*}
where $\chi$ is a Fourier multiplier of order zero which localizes bilinear interactions on a very specific frequency set. Following the theory of the three-dimensional \NS\ equations we do not expect hence \eqref{eq:lim_Uosc} to be globally well-posed. Despite this, we shall see that the zero-order Fourier multiplier $ \chi\left( D \right) $ has in fact a nontrivial smoothing effect, which makes the bilinear interaction $ \widetilde{\cQ}_1 \left( U_{\osc}, U_{\osc} \right) $ smoother than $ U_{\osc}\cdot \nabla U_{\osc} $.
\end{rem}

\subsection{Elements of Littlewood-Paley theory.}\label{elements LP}

A tool that will be widely used all along the paper is the theory of Littlewood--Paley, which consists in doing a dyadic cut-off of the  frequencies.\\
Let us define the (non-homogeneous)  truncation operators as follows:
\begin{align*}
\tq u= & \sum_{n\in\mathbb{Z}^3} \hat{u}_n \varphi \left(\frac{\left|\check{n}\right|}{2^q}\right) e^{i\check{n} \cdot x}, &\text{for }& q\geqslant 0,\\
\triangle_{-1}u=& \sum_{n\in\mathbb{Z}^3} \hat{u}_n \chi \left( \left|\check{n} \right| \right)e^{i\check{n} \cdot x},\\
\tq u =& 0, &\text{for }& q\leqslant -2,
\end{align*}
where $u\in\mathcal{D}'\left(\mathbb{T}^3 \right)$ and  $\hat{u}_n$ are the Fourier coefficients of $u$. The functions $\varphi$ and $\chi$ represent a partition of the unity in $\mathbb{R}$, which means that are smooth functions with compact support such that
\begin{align*}
\text{supp}\;\chi \subset&\; B \left(0,\frac{4}{3}\right), & \text{supp}\;\varphi \subset& \;\mathcal{C}\left( \frac{3}{4},\frac{8}{3}\right),
\end{align*}
and such that for all $t\in\mathbb{R}$,
$$
\chi\left( t\right) +\sum_{q\geqslant 0} \varphi \left( 2^{-q}t\right)=1.
$$

Let us define further the low frequencies cut-off operator
$$
S_q u= \sum_{q'\leqslant q-1}\Tq u.
$$

\subsubsection{Paradifferential calculus.}\label{paradifferential calculus}

The dyadic decomposition turns out to be very useful also when it comes to study the product betwee two distributions. We can in fact, at least formally, write for two distributions $u$ and $v$
\begin{align}\label{decomposition vertical frequencies}
u=&\sum_{q\in\mathbb{Z}} \tq u ; &
v=&\sum_{q'\in\mathbb{Z}} \Tq v;&
u\cdot v = & \sum_{\substack{q\in\mathbb{Z} \\ q'\in\mathbb{Z}}}\tq u \cdot \Tq v.
\end{align}

We are going to perform a Bony decomposition (see  \cite{BCD}, \cite{Bony1981}, \cite{chemin_book} for the isotropic case and \cite{CDGG2},\cite{iftimie_NS_perturbation} for the anisotropic one).
\\
Paradifferential calculus is  a mathematical tool for splitting the above sum in three parts 
\begin{itemize}
\item The first part concerns  the indices $\left(q,q'\right)$ for which the size of  $\text{ supp}\;\mathcal{F}\left( \tq u\right)$ is small compared to $\text{supp}\;\mathcal{F}\left( \Tq v\right)$.
\item The second part contains the indices corresponding to those frequencies of $u$ which are large compared to those of $v$.
\item In the last part  $\text{supp}\;\mathcal{F}\left( \Tq v\right)$ and $\text{ supp}\;\mathcal{F}\left( \tq u\right)$ have comparable sizes.
\end{itemize}
In particular we obtain
$$
u\cdot v = T_u v+ T_v u + R\left(u,v\right),
$$
where
\begin{align*}
T_u v=& \sum_q S_{q-1} u\; \tq v,&
 T_v u= & \sum_{q'} S_{q'-1} v \; \Tq u,&
 R\left( u,v \right) = & \sum_k \sum_{\left| \nu\right| \leqslant 1} \triangle_k  u\; \triangle_{k+\nu} v.
\end{align*}
The following almost orthogonality properties hold
\begin{align*}
\tq \left( \Sq a \Tq b\right)=&0, & \text{if }& \left|q-q'\right|\geqslant 5, \\
\tq \left( \Tq a \triangle_{q'+\nu}b\right)=&0, & \text{if }& q'> q-4,\; \left| \nu \right|\leqslant 1,
\end{align*}
and hence we will often use the following relation
\begin{align}
\tq\left( u\cdot v \right)= &\sum_{\left| q -q'\right| \leqslant 4} \tq\left(S_{q'-1} v\; \Tq u\right) +
\sum_{\left| q -q'\right| \leqslant 4} \tq\left(S_{q'-1} u\; \Tq v\right)+
\sum_{q'\geqslant q-4}\sum_{|\nu|\leqslant 1}\tq\left(  \Tq a \triangle_{q'+\nu}b\right)\nonumber ,\\
=& \sum_{\left| q -q'\right| \leqslant 4} \tq\left(S_{q'-1} v \; \Tq u\right) + \sum_{q'>q-4} \tq\left( S_{q'+2} u \Tq v\right).\label{Paicu Bony deco}
\end{align}

There is an interesting relatoin of regularity between dyadic blocks and full function in the Sobolev spaces, i.e.
\begin{equation}
\label{regularity_dyadic}
\left\| \tq f \right\|_{\cFLtwo} \leqslant C c_q (f) 2^{-qs}\left\| f \right\|_{\cFHs},
\end{equation}
with $ \left\| \left\lbrace c_q \right\rbrace_{q\in\mathbb{Z}} \right\|_{\ell^2\left( \mathbb{Z} \right)}\equiv 1 $. In the same way we denote as $ b_q $ a sequence in $ \ell^1 \left( \mathbb{Z} \right) $ such that $ \sum_q \left| b_q \right| \leqslant 1$.\\

The interest in the use of the dyadic decomposition is that the derivative of a function localized in  frequencies of size $2^q$ acts like the multiplication with the factor $2^q$ (up to a constant independent of $q$). In our setting (periodic case) a Bernstein type inequality holds. For a proof of the following lemma in the anisotropic (hence as well isotropic) setting we refer to the work \cite{iftimie_NS_perturbation}. For the sake of self-completeness we state the result in both isotropic and anisotropic setting.

\begin{lemma}\label{bernstein inequality}
Let $u$ be a function such that $\mathcal{F}u $ is supported in $ 2^q\mathcal{C}$, where $\mathcal{F}$ denotes the Fourier transform. For all integers $k$ the following relation holds
\begin{align*}
2^{qk}C^{-k}\left\| u \right\|_{{\cFLp}}\leqslant & \left\|\left( -\Delta \right)^{k/2} u \right\|_{{\cFLp}} \leqslant 2^{qk}C^{k}\left\| u \right\|_{{\cFLp}}.
\end{align*}

Let now $r\geqslant r' \geqslant 1$ be real numbers. Let  $\text{supp}\mathcal{F}u \subset  2^q B$, then
\begin{align*}
\left\| u \right\|_{ L^r}\leqslant & C \cdot 2^{3q\left( \frac{1}{r'}-\frac{1}{r}\right)}\left\| u \right\|_{ L^{r'}}.
\end{align*}
Let us consider now a function $ u $ such that $ \mathcal{F}u $ is supported in $ 2^q\mathcal{C}_h \times 2^{q'}\mathcal{C}_v $. Let us define $ D_h= \left( -\Dh \right)^{1/2}, D_3=\left| \partial_3 \right| $, then
$$
C^{-q-q'}2^{qs+q's'}\left\| u \right\|_{\cFLp}\leqslant 
\left\| D_h^s D_3^{s'} u \right\|_{\cFLp} \leqslant C^{q+q'}2^{qs+q's'}\left\| u \right\|_{\cFLp},
$$
and given $ 1\leqslant p'\leqslant p\leqslant \infty $, $1\leqslant r'\leqslant r\leqslant \infty $, then 
\begin{align*}
\left\| u \right\|_{L^p_hL^r_v} \leqslant & C^{q+q'} 2^{2q \left( \frac{1}{p'}-\frac{1}{p} \right) + q' \left( \frac{1}{r'}-\frac{1}{r} \right)
} \left\| u \right\|_{L^{p'}_h L^{r'}_v},\\
\left\| u \right\|_{L^r_vL^p_h} \leqslant & C^{q+q'} 2^{2q \left( \frac{1}{p'}-\frac{1}{p} \right) + q' \left( \frac{1}{r'}-\frac{1}{r} \right)
} \left\| u \right\|_{ L^{r'}_vL^{p'}_h}.
\end{align*}
\end{lemma}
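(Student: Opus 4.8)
\emph{Proof sketch.} The plan is to derive every inequality from Young's convolution inequality on $\T^3$, after rewriting a frequency--localised function as a convolution against the periodisation of a conveniently rescaled Schwartz kernel. Fix $\tilde\varphi\in C^\infty_c\pare{\R^3\setminus\set 0}$ equal to $1$ on $\mathcal C$ and supported in a slightly larger annulus. If $\mathcal F u$ is supported in $2^q\mathcal C$ then $\hat u_n=\tilde\varphi\pare{2^{-q}\check n}\hat u_n$ for every $n$, hence $u=g_q\ast u$ with $g_q\pare x=\sum_{m}2^{3q}h\pare{2^q\pare{x+\Lambda m}}$, where $h=\mathcal F^{-1}_{\R^3}\tilde\varphi$ is Schwartz and $\Lambda$ denotes the period lattice of $\T^3$. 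The change of variables $y=2^qx$, together with the elementary fact that the $L^\rho$ norm of the periodisation $\sum_m h\pare{\cdot+2^q\Lambda m}$ over $\R^3/2^q\Lambda\mathbb Z^3$ stays bounded uniformly in $q\geqslant-1$ (for the finitely many small indices by direct inspection, for large $q$ because the translated copies are essentially disjoint so the quantity is comparable to $\norm h_{L^\rho\pare{\R^3}}$), yields for every $\rho\in\bra{1,\infty}$
\[
\norm{g_q}_{L^\rho\pare{\T^3}}\leqslant C\,2^{3q\pare{1-1/\rho}}.
\]

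Granting this, the second display is immediate: given $r\geqslant r'\geqslant1$ choose $\rho\in\bra{1,\infty}$ with $1+\tfrac1r=\tfrac1\rho+\tfrac1{r'}$, i.e. $1-\tfrac1\rho=\tfrac1{r'}-\tfrac1r$; then Young's inequality on $\T^3$ gives $\norm u_{L^r}=\norm{g_q\ast u}_{L^r}\leqslant\norm{g_q}_{L^\rho}\norm u_{L^{r'}}\leqslant C\,2^{3q\pare{1/r'-1/r}}\norm u_{L^{r'}}$. For the first display one repeats the construction with the symbol $\av\xi^{\pm k}\tilde\varphi\pare\xi$ instead of $\tilde\varphi$: it is again smooth and compactly supported away from the origin, so its inverse Fourier transform $h_{\pm k}$ is Schwartz, rescaling produces the extra factor $2^{\pm qk}$, and Young's inequality with $\rho=1$ gives $\norm{\pare{-\Delta}^{k/2}u}_{\cFLp}\leqslant 2^{qk}\norm{h_k}_{L^1\pare{\R^3}}\norm u_{\cFLp}$. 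Since the $W^{N,1}$ norms of $\av\xi^k\tilde\varphi$ grow at most geometrically in $k$ one has $\norm{h_k}_{L^1}\leqslant C^k$, which is the upper bound; applying the same estimate with $-k$ to $\pare{-\Delta}^{k/2}u$, which is still supported in $2^q\mathcal C$, gives the lower one.

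For the anisotropic statements one uses a product cut--off $\tilde\varphi_h\pare{2^{-q}\xi_h}\tilde\varphi_v\pare{2^{-q'}\xi_3}$, so that the associated kernel factorises as $g^h_q\pare{x_h}\,g^v_{q'}\pare{x_3}$, the two factors being periodisations of rescaled Schwartz functions on $\R^2$ and $\R$ and obeying, uniformly in $q,q'$, $\norm{g^h_q}_{L^\rho\pare{\T^2_h}}\leqslant C^q2^{2q\pare{1-1/\rho}}$ and $\norm{g^v_{q'}}_{L^\sigma}\leqslant C^{q'}2^{q'\pare{1-1/\sigma}}$. Replacing $\tilde\varphi_h,\tilde\varphi_v$ by $\av{\xi_h}^s\tilde\varphi_h$ and $\av{\xi_3}^{s'}\tilde\varphi_v$ and applying Young's inequality on $\T^3$ with $\rho=\sigma=1$ against the factorised kernel, whose $L^1\pare{\T^3}$ norm equals the product $\norm{g^h_q}_{L^1}\norm{g^v_{q'}}_{L^1}\leqslant C^{q+q'}2^{qs+q's'}$, yields the upper bound for $\norm{D_h^sD_3^{s'}u}_{\cFLp}$; the lower bound follows by applying this upper bound to $D_h^sD_3^{s'}u$ with the exponents negated.

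Finally, for the two mixed--norm inequalities one writes $u=g^h_q\ast_h\pare{g^v_{q'}\ast_v u}$ and applies Young's inequality twice: in the horizontal variable $\norm{g^h_q\ast_h F\pare{\cdot,x_3}}_{L^p_h}\leqslant\norm{g^h_q}_{L^\tau_h}\norm{F\pare{\cdot,x_3}}_{L^{p'}_h}$ with $1+\tfrac1p=\tfrac1\tau+\tfrac1{p'}$, and in the vertical one $\norm{g^v_{q'}\ast_v u\pare{x_h,\cdot}}_{L^r_v}\leqslant\norm{g^v_{q'}}_{L^\varsigma_v}\norm{u\pare{x_h,\cdot}}_{L^{r'}_v}$ with $1+\tfrac1r=\tfrac1\varsigma+\tfrac1{r'}$, taking the norms in whichever order ($L^p_hL^r_v$ or $L^r_vL^p_h$) is required; inserting the kernel bounds $\norm{g^h_q}_{L^\tau_h}\leqslant C^q2^{2q\pare{1/p'-1/p}}$ and $\norm{g^v_{q'}}_{L^\varsigma_v}\leqslant C^{q'}2^{q'\pare{1/r'-1/r}}$ gives exactly the claimed prefactors. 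The only genuinely torus--specific ingredient is the uniform control of the periodisation norms $\norm{g_q}_{L^\rho\pare{\T^3}}$, which is precisely the content of the anisotropic Bernstein estimates of \cite{iftimie_NS_perturbation}; the rest of the difficulty is pure bookkeeping — keeping the cut--offs, their rescalings, the Young exponents and the horizontal and vertical factors mutually consistent — rather than any genuine analytic obstacle.
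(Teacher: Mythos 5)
Your argument is correct and is the standard periodised-kernel proof of Bernstein's inequalities on the torus: the paper itself does not prove this lemma but simply cites \cite{iftimie_NS_perturbation}, and the convolution-plus-Young scheme you describe (with the uniform $L^\rho$ control of the periodisation of the rescaled Schwartz kernel, the symbol $\av{\xi}^{\pm k}\tilde\varphi$ for the derivative bounds, and the factorised horizontal/vertical kernel with Minkowski--Young in each variable for the mixed norms) is precisely the argument carried out in that reference. No gaps.
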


\section{Analysis of the linear perturbation operator $\PA$}

\label{sec:linear_problem}

We recall that thoughout this paper, we alway use upper-case letters to represent vector fields on $\T^3$, with four components, the first three components of which form a divergence-free vector field (denoted by the same lower-case letter). More precisely, for a generic vector field $A$, we have 
\begin{equation*}
	A(x_1,x_2,x_3) = \pare{A^1(x_1,x_2,x_3),A^2(x_1,x_2,x_3),A^3(x_1,x_2,x_3),A^4(x_1,x_2,x_3)} = \pare{a(x_1,x_2,x_3), A^4(x_1,x_2,x_3)},
\end{equation*}
where
\begin{equation*}
	a = (a^1,a^2,a^3) \equiv (A^1,A^2,A^3), \quad\mbox{and}\quad \dive a = 0.
\end{equation*}

As explained in the introduction, the time derivative $\dd_t V^\eps$ is not uniformly bounded. In order to take the limit $\eps \to 0$, we need to filter the high oscillating terms out of the system \eqref{PBSe}. To this end, we consider the following linear, homogeneous Cauchy problem, which describes the internal waves associated to \eqref{PBSe}
\begin{equation}
	\label{eq:lin_prob}
	\left\lbrace
	\begin{aligned}
		&\partial_\tau W + \PA \; W = 0,\\
		&W|_{\tau=0}=W_0\in L^2_\sigma(\T^3),
	\end{aligned}
	\right.
\end{equation}
where 
\begin{equation*}
	L^2_\sigma \stackrel{def}{=} \set{ U=\pare{u, U^4}\in L^2, \; \dive u = 0 \;\mbox{ and } \int_{\T^3} U(x)\ dx = 0 }.
\end{equation*}
In \cite{Scrobo_Froude_periodic}, a detailed analysis of \eqref{eq:lin_prob} was given in a particular case where the vector fields are supposed to have zero horizontal average. In this section, we will provide a complete spectral analysis of the operator $\PA$ in the general case where there is no such assumption, which allows to get a detailed description of the solution of \eqref{eq:lin_prob} in $L^2_\sigma(\T^3)$. Using the decomposition \eqref{eq:bar-tilde_dec}, we can write
\begin{equation*}
	{L^2_\sigma} = \widetilde{L^2_\sigma} \oplus \underline{{L^2_\sigma}},
\end{equation*}
where
\begin{align*}
	\widetilde{L^2_\sigma}= & \Big\{ U=\pare{u, U^4}\in L^2_{\sigma} \ \Big\vert \ \int_{\T^2_h} U\pare{x_h, x_3}\, dx_h=0  \Big\},   \\
 	\underline{L^2_\sigma} = & \Big\{U=\pare{u, U^4 } \in L^2_\sigma \ \Big\vert \ U=U\pare{ x_3 } \ \text{and} \ u^3\equiv 0 \Big\}.
\end{align*}
Let us remark that, since $\dive a = 0$ and $a = a(x_3)$, we deduce that $\partial_3 a^3=0$, which in turn implies that $a^3=0$, taking into account the zero average of $a^3$ in $\T^3$.

\medskip 

Writing the first equation of \eqref{eq:lin_prob} in the Fourier variables, we have
\medskip
\begin{equation}
	\label{eq:lin_probF} 
	\left\lbrace
	\begin{aligned}
		&\dd_\tau \widehat{W} (\tau,n) + \widehat{\PA}\,(n)\; \widehat{W}(\tau,n) = 0,\\
		&\widehat{W}(0,n) = \widehat{W}_0(n),
	\end{aligned}
	\right.
\end{equation}
where 
\begin{equation}
	\label{eq:def_PA}
	\widehat{\PA}\,(n) = \left( \begin{array}{cccc}
				0&0&0&-\frac{ \cn_1 \cn_3}{\left| \cn \right|^2}\\
				0&0&0&-\frac{ \cn_2 \cn_3}{\left| \cn \right|^2}\\
				0&0&0&1-\frac{ \cn_3^2}{\left| \cn \right|^2}\\
				0&0&-1&0
			\end{array} \right).
\end{equation}
Standard calculations show that the matrix $\widehat{\PA}\,(n)$ possesses very different spectral properties in the case where $\cn_h = 0$ and in the case where $\cn_h \neq 0$.

\emph{1. In the case where $\cn_h \neq 0$:} The matrix $\widehat{\PA}\,(n)$ admits an eigenvalue $\omega^0(n)\equiv 0$ of multiplicity 2 and two other conjugate complex eigenvalues
\begin{equation}
	\label{eq:eigenvalues} i \omega^\pm (n) = \pm i \omega(n),
\end{equation}
where $\omega(n) = \frac{\left|\cn_h\right|}{\left|\cn\right|}$. Associated to each eigenvalue, there is a unique unit eigenvector, orthogonal to the frequency vector ${}^t(\cn_1,\cn_2,\cn_3,0)$, which is explicitly given as follows
\begin{align}
	\label{eq:eigenvectors}
	e_0(n) = &\frac{1}{\left|\cn_h\right|} \left( \begin{array}{c} -\cn_2\\ \cn_1\\ 0\\ 0 \end{array}\right), 
	&e_\pm(n)= &\frac{1}{\sqrt{2}} \left( \begin{array}{c}
		\pm \; i \; \frac{\cn_1 \cn_3}{\left| \cn_h \right|\; \left| \cn \right|}\\[2mm]
		\pm \; i \; \frac{\cn_2 \cn_3}{\left| \cn_h \right|\; \left| \cn \right|}\\[2mm]
		\mp \; i \; \frac{\left| \cn_h \right|}{\left| \cn \right|}\\[2mm]
		1
	\end{array}\right).
\end{align}	
Since $\set{e_\alpha}_{\alpha = 0,\pm}$ form an orthonormal basis of the subspace of $\mathbb{C}^4$ which is orthogonal to ${}^t(\cn_1,\cn_2,\cn_3,0)$, the classical theory of ordinary differential equations imply that the solution $\widehat{W}(\tau,n)$ of \eqref{eq:lin_probF}, with $\cn_h \neq 0$, writes
\begin{equation}
	\label{eq:FWnh} \widehat{W} (\tau,n) = \sum_{\alpha \in \set{0,\pm}} e^{i\tau \omega^\alpha(n)} \psca{\widehat{W}_0(n), e_\alpha(n)}_{\mathbb{C}^4} e_\alpha(n).
\end{equation}	
	
\medskip

\emph{2. In the case where $\cn_h = 0$:} The matrix $\widehat{\PA} (n) $ becomes
\begin{equation*}
	\widehat{\PA}\left( 0,0,n_3 \right)= \left( 
	\begin{array}{cccc}
	0&0&0&0\\
	0&0&0&0\\
	0&0&0&0\\
	0&0&-1&0
	\end{array}
	 \right),
\end{equation*}
and admits only one eigenvalue $\omega (0,0,n_3) = 0$ of multiplicity 4, and three associate unit eigenvectors, orthogonal to the frequency vector ${}^t(0,0,\cn_3,0)$
\begin{align}
	\label{ev_nh=0}
	f_1 = &	\left( \begin{array}{c} 1\\0\\0\\0 \end{array} \right),
	&
	f_2 = & \left( \begin{array}{c} 0\\1\\0\\0 \end{array} \right),
	&
	f_3 = & \left( \begin{array}{c} 0\\0\\0\\1 \end{array} \right).
\end{align}
The solution $\widehat{W}(\tau,n_3) \equiv \widehat{W}(\tau,0,0,n_3)$ of \eqref{eq:lin_probF} writes
\begin{equation}
	\label{eq:FWnh0} \widehat{W} (\tau,n_3) = \sum_{j \in \set{1,2,3}} \psca{\widehat{W}_0(0,0,n_3), f_j}_{\mathbb{C}^4} f_j.
\end{equation}

\medskip

The expressions of the Fourier modes $\widehat{W}(n)$ given in \eqref{eq:FWnh} and \eqref{eq:FWnh0} imply the following result

\begin{lemma}
	\label{lem:DecompW} Let $W_0\in L^2_\sigma(\T^3)$. The unique solution $W$ of the system \eqref{eq:lin_prob} accepts the following decomposition
	\begin{equation}
		\label{eq:DecompW} W(\tau,x) = \underline{W}\pare{x_3} + \overline{W}\pare{x} + W_{\osc}\pare{\tau, x},
	\end{equation} 
	where
	\begin{equation*}
		\begin{aligned}
			\underline{W}\pare{x_3} &= \sum_{n_3 \in \mathbb{Z}} \sum_{j \in \set{1,2,3}}  \psca{\widehat{W}_0(0,0,n_3), f_j}_{\mathbb{C}^4} e^{i\cn_3 x_3} f_j\\
			\overline{W}\pare{x} &= \sum_{\substack{n\in\bZ^3\\n_h\neq 0}} \psca{\widehat{W}_0(n), e_0(n)}_{\mathbb{C}^4} e^{i\cn \cdot x} e_0(n)\\
			W_{\osc}\pare{\tau, x} &= \sum_{\substack{n\in\bZ^3\\n_h\neq 0}} \sum_{\alpha \in \set{\pm}} \psca{\widehat{W}_0(n), e_\alpha(n)}_{\mathbb{C}^4} e^{i\tau \omega^\alpha(n)} e^{i\cn \cdot x} e_\alpha(n).
		\end{aligned}
	\end{equation*}
\end{lemma}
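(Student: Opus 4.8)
The plan is to solve \eqref{eq:lin_prob} one Fourier mode at a time, using the spectral description of $\widehat{\PA}(n)$ established above, and then to reassemble the solution by sorting the modes according to the value of the associated eigenvalue ($0$, or $\pm i\omega(n)$). First I would record the effect of the two constraints defining $L^2_\sigma(\T^3)$: since $\dive w_0 = 0$ we have $\sum_{j=1}^3 \cn_j\,\widehat{W}_0^j(n) = 0$ for every $n$, i.e. $\widehat{W}_0(n)$ is orthogonal in $\bC^4$ to the frequency direction ${}^t(\cn_1,\cn_2,\cn_3,0)$, and the condition $\int_{\T^3} W_0\,\dx = 0$ disposes of the mode $n=0$. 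Consequently, when $\cn_h \neq 0$ the vector $\widehat{W}_0(n)$ lies in the span of $\set{e_0(n),e_+(n),e_-(n)}$, while when $\cn_h=0$ it has vanishing third component and hence lies in the span of $\set{f_1,f_2,f_3}$. In both cases the solution of the scalar Cauchy problem \eqref{eq:lin_probF} is exactly the one written in \eqref{eq:FWnh}, resp. \eqref{eq:FWnh0}.

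Next I would group these modes according to their eigenvalue. The modes with $\cn_h=0$ are, by \eqref{eq:FWnh0}, $\tau$-independent and supported on $\set{f_1,f_2,f_3}$; their sum is an $x_h$-independent field with zero third component, which is precisely the horizontal average $\underline{W_0}$, an element of $\underline{L^2_\sigma}$ — this is $\underline{W}(x_3)$. Among the modes with $\cn_h\neq 0$, the component of $\widehat{W}_0(n)$ along $e_0(n)$ carries the eigenvalue $\omega^0(n)\equiv 0$ and is therefore $\tau$-independent as well; its sum, $\overline{W}(x)$, has zero horizontal average, so it belongs to $\widetilde{L^2_\sigma}$. Finally the components along $e_\pm(n)$ carry the purely imaginary eigenvalues $\pm i\omega(n)$ with $\omega(n)=|\cn_h|/|\cn|$, and their sum is the genuinely $\tau$-oscillating field $W_{\osc}(\tau,x)$, again of zero horizontal average. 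Adding the three groups reproduces \eqref{eq:DecompW}.

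For the final bookkeeping I would invoke Parseval: because $\set{e_0(n),e_\pm(n)}$ and $\set{f_1,f_2,f_3}$ are orthonormal in $\bC^4$ and $|e^{i\tau\omega^\alpha(n)}|=1$, one obtains $\norm{\underline{W}}_{L^2}^2+\norm{\overline{W}}_{L^2}^2+\norm{W_{\osc}(\tau)}_{L^2}^2=\norm{W_0}_{L^2}^2$ for every $\tau$, so each of the three series converges in $L^2_\sigma(\T^3)$, $W(\tau)$ is bounded in $L^2$ uniformly in $\tau$, and term-by-term differentiation confirms that $W$ solves \eqref{eq:lin_prob}. Uniqueness I would deduce from the fact that $\PA$ is bounded on $L^2_\sigma(\T^3)$ (its Fourier symbol has uniformly bounded entries) and satisfies $\psca{\PA U,U}_{L^2}=\psca{\cA U,U}_{L^2}=0$ for every $U\in L^2_\sigma$ (using $\bP U=U$, the self-adjointness of $\bP$ and the skew-symmetry of $\cA$), so that $e^{-\tau\PA}$ is a one-parameter group of $L^2_\sigma$-isometries and the Cauchy problem is well posed. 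The step I would be most careful to spell out is the reduction in the first paragraph: when $\cn_h=0$ the symbol $\widehat{\PA}(0,0,n_3)$ is nilpotent of order two, so a nonzero third component of $\widehat{W}_0(0,0,n_3)$ would generate a contribution growing linearly in $\tau$, and when $\cn_h\neq 0$ the eigenvalue $0$ has multiplicity two; in both cases it is precisely the divergence-free condition (together with zero average at $n=0$) that confines $\widehat{W}_0(n)$ to the span of the listed eigenvectors and makes \eqref{eq:DecompW} come out as stated, with $\underline{W}$ and $\overline{W}$ stationary.
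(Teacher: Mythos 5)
Your proof is correct and takes essentially the same approach as the paper: solve \eqref{eq:lin_probF} mode by mode using the spectral description of $\widehat{\PA}(n)$, then group modes according to whether the eigenvalue is $0$ or $\pm i\omega(n)$. The paper treats the lemma as an immediate consequence of the preceding discussion (expressions \eqref{eq:FWnh} and \eqref{eq:FWnh0}), so you have simply spelled out the details that the paper leaves implicit — in particular, the observation that $\widehat{\PA}(0,0,n_3)$ is nilpotent of order two and that it is the divergence-free and zero-average constraints that kill the would-be polynomial growth in $\tau$, a point the paper only touches in the remark that $\dive a=0$ and $a=a(x_3)$ force $a^3=0$.
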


\bigskip

Now, we set
\begin{align*}
	&E_\alpha(n,x) = e^{i\cn \cdot x} e_\alpha(n), \hspace{1cm} \forall \, n\in\bZ^3, n_h\neq 0, \forall \, \alpha\in\set{0,\pm}\\
	&F_j(n_3,x_3) = e^{i\cn_3 x_3} f_j, \hspace{1.1cm} \forall \, n_3\in\bZ, \forall \, j\in\set{1,2,3},
\end{align*}
then, $\set{E_\alpha(n,\cdot), F_j(n_3,\cdot)}$ forms an orthonormal basis of $L^2_\sigma(\T^3)$, and we have the following decomposition

\begin{definition}
	\label{def:DecompV} For any vector field $V \in L^2_\sigma(\T^3)$, we have
	\begin{equation}
		\label{eq:DecompV} V(x) = \underline{V}\pare{x_3} + \overline{V}\pare{x} + V_{\osc}\pare{x},
	\end{equation}
	where 
	\begin{equation*}
		\begin{aligned}
			\underline{V}\pare{x_3} &= \sum_{n_3 \in \mathbb{Z}} \sum_{j \in \set{1,2,3}}  \psca{\widehat{V}(0,0,n_3), f_j}_{\mathbb{C}^4} F_j(n_3,x_3)\\
			\overline{V}\pare{x} &= \sum_{\substack{n\in\bZ^3\\n_h\neq 0}} \psca{\widehat{V}(n), e_0(n)}_{\mathbb{C}^4} E_0(n,x)\\
			V_{\osc}\pare{\tau, x} &= \sum_{\substack{n\in\bZ^3\\n_h\neq 0}} \sum_{\alpha \in \set{\pm}} \psca{\widehat{V}(n), e_\alpha(n)}_{\mathbb{C}^4} E_\alpha(n,x).
		\end{aligned}
	\end{equation*}
\end{definition}

\noindent We also have the following result

\begin{prop}
	\label{prop:Projection}
	Let $\Pi_X$ be the projection onto the subspace $X$ of $L^2_\sigma(\T^3)$. For any vector field $V \in L^2_\sigma(\T^3)$, we have
	\begin{enumerate}
		\item \label{KerPA1} $\Pi_{\underline{L^2_\sigma}} V = \underline{V}\pare{x_3}$.
			
		\medskip
			
		\item \label{KerPA2} $\Pi_{\widetilde{L^2_\sigma}} V = \widetilde{V}(x) = V(x) - \underline{V}\pare{x_3} = \overline{V}\pare{x} + V_{\osc}\pare{x}$.
		
		\medskip
		
		\item \label{KerPA3} $\underline{V}\pare{x_3} + \overline{V}\pare{x} = \Pi_{\ker \pare{\PA}} V = \Pi_{\ker \pare{\cL - \textnormal{Id}}} V$.
	\end{enumerate}
	Thus, the operator $\cL(\tau)$ only acts on the oscillating part $V_{\osc}$ of $V$.
\end{prop}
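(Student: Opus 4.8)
The plan is to read off all three assertions from the Fourier-side spectral description of $\widehat{\PA}(n)$ established above, together with the fact that $\set{E_\alpha(n,\cdot)}_{n_h\neq0,\,\alpha\in\set{0,\pm}}\cup\set{F_j(n_3,\cdot)}_{n_3\in\bZ,\,j\in\set{1,2,3}}$ is an orthonormal basis of $L^2_\sigma(\T^3)$. With respect to this basis, Definition \ref{def:DecompV} is just the expansion of $V$ regrouped into three mutually orthogonal pieces: $\underline{V}$ spanned by the $F_j$'s, $\overline{V}$ spanned by the $E_0$'s, and $V_{\osc}$ spanned by the $E_\pm$'s. Hence \eqref{KerPA1}--\eqref{KerPA3} reduce to identifying each of $\underline{L^2_\sigma}$, $\widetilde{L^2_\sigma}$ and $\ker\pare{\PA}$ with the closed span of the appropriate sub-family of the basis, after which the projection formulas are immediate from orthonormality.

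For \eqref{KerPA1} I would first check that $\underline{L^2_\sigma}=\overline{\mathrm{span}}\set{F_j(n_3,\cdot)\,;\,n_3\in\bZ,\ j\in\set{1,2,3}}$: the inclusion ``$\supseteq$'' is clear since each $F_j(n_3,\cdot)$ depends on $x_3$ only, has vanishing third component and zero average; for ``$\subseteq$'', an element $U=\pare{u,U^4}\in\underline{L^2_\sigma}$ has Fourier support in $\set{n_h=0}$ and satisfies $u^3\equiv0$, so $\widehat{U}(0,0,n_3)\in\mathrm{span}\set{f_1,f_2,f_3}$ for every $n_3$. Orthonormality then gives $\Pi_{\underline{L^2_\sigma}}V=\sum_{n_3}\sum_{j}\psca{\widehat{V}(0,0,n_3),f_j}_{\bC^4}\,F_j(n_3,x_3)=\underline{V}\pare{x_3}$. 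For \eqref{KerPA2} I would use the orthogonal splitting $L^2_\sigma=\widetilde{L^2_\sigma}\oplus\underline{L^2_\sigma}$ introduced at the beginning of this section (it is plainly $L^2$-orthogonal, using the remark that $a=a(x_3)$ and $\dive a=0$ force $a^3\equiv0$), so that $\Pi_{\widetilde{L^2_\sigma}}=\mathrm{Id}-\Pi_{\underline{L^2_\sigma}}$ and hence $\Pi_{\widetilde{L^2_\sigma}}V=V-\underline{V}\pare{x_3}=\widetilde{V}$; since $\overline{V}$ and $V_{\osc}$ only carry modes with $n_h\neq0$, both lie in $\widetilde{L^2_\sigma}$, which forces $\widetilde{V}=\overline{V}+V_{\osc}$.

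For \eqref{KerPA3} the idea is to intersect the nullspace of $\widehat{\PA}(n)$ with the divergence-free (and zero-average) constraint defining $L^2_\sigma$. When $n_h\neq0$, on the three-dimensional space orthogonal to ${}^t(\cn_1,\cn_2,\cn_3,0)$ the matrix $\widehat{\PA}(n)$ has the simple eigenvalues $0,\pm i\omega(n)$ with eigenvectors $e_0(n),e_\pm(n)$, so its kernel there is exactly $\bC\,e_0(n)$; when $n_h=0$, the constraints $\dive v=0$ and $\widehat{V}(0)=0$ force $\widehat{V}(0,0,n_3)\in\mathrm{span}\set{f_1,f_2,f_3}$, which $\widehat{\PA}(0,0,n_3)$ annihilates in full. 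Summing over $n$ then gives $\ker\pare{\PA}=\overline{\mathrm{span}}\set{E_0(n,\cdot)\,;\,n_h\neq0}\oplus\overline{\mathrm{span}}\set{F_j(n_3,\cdot)}$, i.e. $\Pi_{\ker\pare{\PA}}V=\overline{V}+\underline{V}$. Finally, since $\cL(\tau)=e^{-\tau\PA}$ acts in Fourier as multiplication by $e^{-\tau\widehat{\PA}(n)}$, which is diagonal in the basis $\set{e_\alpha(n)}$ (resp. $\set{f_j}$) with entries $e^{i\tau\omega^\alpha(n)}$ (resp. $1$), and $\omega^\pm(n)=\pm\omega(n)\neq0$, a vector is left invariant by $\cL(\tau)$ for every $\tau$ precisely when its $e_\pm$-components vanish, i.e. precisely when it lies in $\ker\widehat{\PA}(n)$; this identifies $\ker\pare{\cL-\mathrm{Id}}$ with $\ker\pare{\PA}$ and, in particular, shows that $\cL(\tau)$ fixes $\underline{V}+\overline{V}$ and merely rotates the phases of $V_{\osc}$. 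The same conclusion can also be read off Lemma \ref{lem:DecompW}, whose $\underline{W}$ and $\overline{W}$ are $\tau$-independent and equal to $\underline{W_0}$ and $\overline{W_0}$.

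The step I expect to require the most care is the treatment of the degenerate frequencies $n_h=0$: there $\widehat{\PA}(n)$ is nilpotent, the algebraic multiplicity of the eigenvalue $0$ (which is $4$) does not match the geometric one (which is $3$), and one must use simultaneously incompressibility and the zero-average condition to see that the admissible Fourier coefficients nevertheless all lie in $\ker\widehat{\PA}(n)$. A minor subtlety is that, for the countably many resonant times $\tau$ with $\tau\,\omega(n)\in2\pi\bZ$ for some $n$, $\ker\pare{\cL(\tau)-\mathrm{Id}}$ could a priori be strictly larger than $\ker\pare{\PA}$; the identity in \eqref{KerPA3} is therefore to be understood as $\ker\pare{\PA}=\bigcap_{\tau}\ker\pare{\cL(\tau)-\mathrm{Id}}$ (equivalently, as holding for a generic non-resonant $\tau$), which is all that is used later on.
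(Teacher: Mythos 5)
Your argument is correct, and it follows the same spectral route that the paper relies on: the paper declares \eqref{KerPA1}--\eqref{KerPA2} immediate and for \eqref{KerPA3} simply refers to the proof of Proposition~4.1 in \cite{grenierrotatingeriodic}, which is precisely the Fourier-side identification of $\ker\widehat{\PA}(n)$ mode by mode that you carry out explicitly. Your treatment of the degenerate frequencies $n_h=0$ (using incompressibility and the zero-average condition to see that the admissible Fourier coefficients already lie in $\operatorname{span}\set{f_1,f_2,f_3}$, which $\widehat{\PA}(0,0,n_3)$ kills) is exactly what is needed there and matches Lemma~\ref{lem:DecompW}. You also flag a genuine subtlety that the paper leaves implicit: for an isolated $\tau$ the kernel of $\cL(\tau)-\mathrm{Id}$ can a priori be strictly larger than $\ker\PA$ (resonant times with $\tau\,\omega(n)\in 2\pi\bZ$), so the identity $\Pi_{\ker(\PA)}=\Pi_{\ker(\cL-\mathrm{Id})}$ is to be read as holding for the common kernel over all $\tau$ (equivalently a generic $\tau$), which is indeed how it is used later; making this explicit is a small but real improvement over the paper's terse statement.
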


\begin{proof}
	The points \eqref{KerPA1} and \eqref{KerPA2} are immediate consequences of the identity \eqref{eq:bar-tilde_dec} and of Definition \eqref{def:DecompV}. The only non evident point is \eqref{KerPA3}, the proof of which simply follows the lines of the proof of \cite[Proposition 4.1]{grenierrotatingeriodic}.
\end{proof}

\section{Analysis of the filtered equation} \label{sec:filt-sys}

In this section, we will use the method of \cite{schochet}, \cite{grenierrotatingeriodic}, \cite{Gallagher_schochet} or \cite{paicu_rotating_fluids} to filter out the high oscillation term in the system \eqref{PBSe}. In order to do so, we first decompose the initial data in the same way as in \eqref{eq:DecompV}, \emph{i.e.}, we write
\begin{equation*}
	V_0= \underline{V_0}+\widetilde{V}_0 = \underline{V_0}+\overline{V}_0 + V_{\osc, 0}.
\end{equation*}

\bigskip

We recall that in the introduction, we defined $\cL$ as the operator which maps $W_0\in L^2_\sigma(\T^3)$ to the solution $W$ of the linear system \eqref{eq:lin_prob}. Using this operator, we now defined the following auxiliary vector field
\begin{equation*}
	U^\varepsilon = \cL \pare{-\frac{t}{\varepsilon}} V^\varepsilon.
\end{equation*}
Replacing $V^\varepsilon = \cL \pare{\frac{t}{\varepsilon}} U^\varepsilon$ into the initial system \eqref{PBSe}, straightforward computations show that $U^\varepsilon$ satisfies the following ``filtered'' system 
\begin{equation}
 	\tag{S${}_\varepsilon$}
	\label{eq:filt-sys}
	\left\lbrace
	\begin{aligned}
		&\partial_t U^\varepsilon+\mathcal{Q}^\varepsilon \pare{U^\varepsilon,U^\varepsilon}- \cA_2^\varepsilon \pare{ D } U^\varepsilon= 0,\\
		&\bigl. U^\varepsilon\bigr|_{t=0}=V_0,
	\end{aligned}
	\right.
\end{equation}
where
\begin{align}
	\label{eq:def_Qeps} \mathcal{Q}^\varepsilon \left( V_1,V_2\right)= & \frac{1}{2} \Lplus \mathbb{P} \left[ \Lminus V_1 \cdot \nabla \Lminus V_2 + \Lminus V_2 \cdot \nabla \Lminus V_1
	\right],\\
	\label{eq:def_A2eps} \cA^\varepsilon_2 (D) W = & \Lplus \cA_2 (D) \Lminus W.
\end{align}

\bigskip

In this section, we will consider the evolution of \eqref{eq:filt-sys} as the superposition of its projections onto the subspace of horizontal independent (or average) vector fields $\underline{L^2_\sigma}$ and the subspace of horizontal oscillating vector fields $\widetilde{L^2_\sigma}$. Always denoting $\underline{V}$ and $\widetilde{V}$ the projection of $V \in L^2_\sigma$ onto $\underline{L^2_\sigma}$ and $\widetilde{L^2_\sigma}$, we formally decompose \eqref{eq:filt-sys} as sum of two following systems
\begin{equation*}
 	\left\lbrace
	\begin{aligned}
		&\partial_t \underline{U^\varepsilon} + \underline{\mathcal{Q}^\varepsilon} \pare{U^\varepsilon,U^\varepsilon} - \underline{\cA_2^\varepsilon}(D) U^\varepsilon = 0,\\
		&\bigl. \underline{U^\varepsilon}\bigr|_{t=0}=\underline{V_0},
	\end{aligned}
	\right.
\end{equation*}
and
\begin{equation*}
 	\left\lbrace
	\begin{aligned}
		&\partial_t \widetilde{U^\varepsilon} + \widetilde{\mathcal{Q}^\varepsilon} \pare{U^\varepsilon,U^\varepsilon} - \widetilde{\cA_2^\varepsilon}(D) U^\varepsilon = 0,\\
		&\bigl. \widetilde{U^\varepsilon}\bigr|_{t=0}=\widetilde{V_0},
	\end{aligned}
	\right.
\end{equation*}
where, for the sake of the simplicity, we identify
\begin{align*}
	&\underline{\mathcal{Q}^\varepsilon} \pare{U^\varepsilon,U^\varepsilon} \equiv \underline{\mathcal{Q}^\varepsilon \pare{U^\varepsilon,U^\varepsilon}} && \underline{\cA_2^\varepsilon}(D) U^\varepsilon \equiv \underline{\cA_2^\varepsilon \pare{ D } U^\varepsilon},\\
	&\widetilde{\mathcal{Q}^\varepsilon} \pare{U^\varepsilon,U^\varepsilon} \equiv \widetilde{\mathcal{Q}^\varepsilon \pare{U^\varepsilon,U^\varepsilon}} && \widetilde{\cA_2^\varepsilon}(D) U^\varepsilon \equiv \widetilde{\cA_2^\varepsilon \pare{ D } U^\varepsilon}.
\end{align*}
In what follows, we provide explicit formulas of $\underline{\mathcal{Q}^\varepsilon} \pare{U^\varepsilon,U^\varepsilon}$, $\underline{\cA_2^\varepsilon}(D) U^\varepsilon$, $\widetilde{\mathcal{Q}^\varepsilon} \pare{U^\varepsilon,U^\varepsilon}$ and $\widetilde{\cA_2^\varepsilon}(D) U^\varepsilon$, and we will decompose the vectors $ \cQ^\varepsilon\pare{U^\varepsilon, U^\varepsilon} $ and $ \cA^\varepsilon_2\pare{D}U^\varepsilon $ in the $ L^2_\sigma $ basis 
\begin{equation*}
	\set{E_\alpha(n,x), F_j(n_3,x_3)}_{\substack{n\in\bZ^3\\ n_3\in\bZ\\ \alpha=0, \pm \\ j=1,2,3}}, 
\end{equation*}
given in the previous section. To this end, in what follows, we introduce some additional notations. For any vector field $V \in L^2_\sigma$, we set
\begin{equation*}
	V^a (n) = \left(\left. \widehat{V} (n) \right| e_a(n)  \right)_{\mathbb{C}^4} \ e_a (n), \qquad  \forall\, a=0, \pm, \ \forall\, n=\pare{ n_h, n_3 } \in \mathbb{Z}^3, \ n_h\neq 0, 
\end{equation*}
and
\begin{equation*}
	V^j(0,n_3) = \left(\left. \widehat{V} \left( 0,  n_3 \right) \right| f_j  \right)_{\mathbb{C}^4} \ f_j,  \qquad \forall\, j=1,2,3, \ \forall\, n_3 \in \mathbb{Z}.
\end{equation*}
We also define the following quantities in order to shorten as much as possible the forthcoming expressions
\begin{align*}
	\omega^{a,b,c}_{k,m,n} & = \omega^a (k) + \omega^b (m) - \omega^c (n), & a,b,c=0,\pm,\\
	\omega^{a,b}_n & =\omega^a(n)+\omega^b(n), & a,b=0,\pm,\\
	\widetilde{\omega}^{b,c}_{m, n} &= \omega^b\pare{m}-\omega^c(n), & b,c=0, \pm,\\
	\omega^{a,b}_{k,m} & = \omega^a(k)+\omega^b(m), & a, b=0, \pm, 
\end{align*}
where the eigenvalues $\omega^0(\cdot)$ and $\omega^\pm(\cdot)$ are defined in the previous section.

\bigskip

Following the lines of \cite{Gallagher_schochet} or \cite{paicu_rotating_fluids}, we deduce that, for $ c=0, \pm $, the projection of $ \cQ^\varepsilon\pare{V_1, V_2} $ onto the subspace generated by $ E_c(n,\cdot) $, for any $ n\in \bZ^3 $ such that $ n_h\neq 0 $,  is 
\begin{align*}
	\psca{\cQ^\varepsilon\pare{V_1, V_2} \,\Big\vert\, E_c(n,x)}_{\widetilde{L^2_\sigma}} E_c(n,x) & = 
	\psca{\cF\pare{\cQ^\varepsilon\pare{V_1, V_2}}(n) \,\Big\vert\, e_c(n)}_{\bC^4} e^{i\cn\cdot x} \ e_c(n),\\
	& = \sum_{k=1}^2 \psca{\cF\pare{\widetilde{ \cQ}^\varepsilon_k \pare{V_1, V_2}}(n) \,\Big\vert\, e_c(n)}_{\bC^4} e^{i\cn\cdot x} \ e_c(n), \notag
\end{align*}
where using the divergence-free property, we can write the Fourier coefficients of the bilinear forms $\widetilde{ \cQ }^\varepsilon_k$, $k=1,2$, as follows
\begin{equation*}
	\cF \pare{{ \widetilde{ \cQ}^\varepsilon_1 \pare{V_1, V_2} }} (n) = \sum_{\substack{k+m=n \\k_h, m_h\neq 0\\ a,b, c=0, \pm}} e^{i\frac{t}{\varepsilon}\omega^{a,b,c}_{k,m,n}} \psca{\widehat{\bP}(n) \begin{pmatrix} \cn \\ 0 \end{pmatrix} \cdot \bS \pare{V_1^{a}(k)  \otimes V_2^b(m)} \Big| \ e_c(n)}_{\mathbb{C}^4}\; e_c(n)
\end{equation*}
and
\begin{align*}
	\cF \pare{\widetilde{ \cQ}^\varepsilon_2 \pare{V_1, V_2}} (n) &=  \sum_{\substack{\pare{0,k_3} + m=n\\  m_h\neq 0\\b, c=0, \pm\\j=1,2,3}} e^{i\frac{t}{\varepsilon}\widetilde{\omega}^{b,c}_{m,n}} \psca{\widehat{\bP}(n) \begin{pmatrix} \cn \\ 0 \end{pmatrix} \cdot \bS \pare{V_1^{j}(0,k_3)  \otimes V_2^b(m)} \big| e_c(n)}_{\mathbb{C}^4}\; e_c(n)\\
 	&+ \sum_{\substack{\pare{0,k_3} + m=n\\m_h\neq 0\\b, c=0, \pm\\j=1,2,3}} e^{i\frac{t}{\varepsilon}\widetilde{\omega}^{b,c}_{m,n}} \psca{\widehat{\bP}(n) \begin{pmatrix} \cn \\ 0 \end{pmatrix} \cdot \bS \left( V_1^{b} (m)  \otimes V_2^j (0,k_3) \right) \big| \ e_c(n)}_{\mathbb{C}^4}\; e_c(n). \notag
\end{align*}
Here, $\begin{pmatrix} \cn \\ 0 \end{pmatrix}$ stands for the four-component vector ${}^t (\cn_1,\cn_2,\cn_3,0)$ and $\bP$ is the Leray projection, defined in \eqref{eq:newLeray}. In order to shorten the notations, we use $\bS$ for the following symmetry operator
\begin{equation*}
	\bS \pare{V_1 \otimes V_2} = V_1 \otimes V_2 + V_2 \otimes V_1,
\end{equation*}
for any $V_1 = (v_1, V_1^4)$ and $V_2 = (v_2,V_2^4)$. We remark that in the above summation formula there is no bilinear interaction which involves elements of the form $ V^{j_1}\pare{0, k_3}\otimes V^{j_2}\pare{0, m_3} $ since, a priori,  these represent vector fields the horizontal average of which is not zero and hence they do not belong to $\widetilde{L^2_\sigma}$.

\bigskip

The projection of $ \cQ^\varepsilon\pare{V_1, V_2} $ onto the subspace generated by $F_j(n_3,\cdot)$,  for any $j=1,2,3$, for any $ n_3\in \bZ$, can be computed in a similar way and we get
\begin{multline*} 
	\psca{\cQ^\varepsilon\pare{V_1, V_2} \,\big\vert\, F_j(n_3,x_3)}_{\underline{L^2_\sigma}} F_j(n_3,x_3)\\
	\begin{aligned}
		& = \psca{\cF\pare{\cQ^\varepsilon\pare{V_1, V_2}} (0,n_3) \,\big\vert\, f_j}_{\bC^4} e^{i\cn_3 x_3} f_j,\\
		& =  \sum_{\substack{k+m=(0,n_3) \\ k_h, m_h\neq 0\\  a,b \in \set{ 0,\pm} }} e^{i\frac{t}{\varepsilon}\omega^{a,b}_{k,m}} \psca{\widehat{\bP}(0,n_3) \begin{pmatrix} 0 \\ 0 \\ \cn_3 \\ 0 \end{pmatrix} \cdot \bS \left( V_1^{a} (k)  \otimes  V_2^b (m)\right) \Big|\, f_j}_{\mathbb{C}^4}\; e^{i\cn_3 x_3} f_j,\\
		& = \psca{\cF \pare{\underline{\cQ}^\varepsilon\pare{ V_1, V_2 }} (0,n_3) \,\big\vert\, f_j} \ e^{i\cn_3 x_3} f_j.
	\end{aligned}
\end{multline*}
Hence,
\begin{equation*}
	\cQ^\varepsilon \pare{ V_1, V_2 }\pare{ x } = \sum_{\substack{n\in\bZ^3 \\ n_h\neq 0}} \cF \pare{{ \widetilde{ \cQ}^\varepsilon_1 \pare{V_1, V_2} } + \widetilde{ \cQ}^\varepsilon_2 \pare{V_1, V_2}} (n) e^{i\cn\cdot x} + \sum_{n_3\in\bZ} \cF \pare{ \underline{\cQ}^\varepsilon\pare{ V_1, V_2 } } (0,n_3) \ e^{i\cn_3 x_3}.
\end{equation*}

\bigskip

The decomposition of $\cA^\varepsilon_2\pare{D}W$ can also be calculated as in \cite{Gallagher_schochet}. We have
\begin{equation*}
	\begin{aligned}
		&\psca{\cA^\varepsilon_2\pare{D}W \,\big\vert\, E_b(n,x)}_{\widetilde{L^2_\sigma}} E_b(n,x)  = \sum_{ a=0,\pm} e^{i\frac{t}{\varepsilon}\omega^{a,b}_n} \psca{\cF \cA_2 (n) W^a (n) \,\big\vert\, e_b(n)}_{\mathbb{C}^4} \ e^{i\cn\cdot x} e_b(n),\\
		&\psca{\cA^\varepsilon_2\pare{D}W \,\big\vert\, F_j(n_3,x_3)}_{\underline{L^2_\sigma}} F_j(n_3,x_3) = \psca{\cF \cA_2 (0,n_3) {W}^j (0,n_3) \,\big\vert\, f_j}_{\mathbb{C}^4} \ e^{i\cn_3\cdot x_3} f_j.
	\end{aligned}
\end{equation*}
Thus,
\begin{equation*}
	\widetilde{ \cA^\varepsilon_2 }\pare{ D } W = \widetilde{ \cA^\varepsilon_2 }\pare{ D } \widetilde{W} = \sum_{\substack{n\in\bZ^3 \\ n_h\neq 0}} \sum_{ a, b=0,\pm} e^{i\frac{t}{\varepsilon}\omega^{a,b}_n} \psca{\cF \cA_2 (n) W^a (n) \,\Big|\, e_b(n)}_{\mathbb{C}^4} \ e^{i\cn\cdot x} \ e_b(n),
\end{equation*}
and
\begin{equation*}
	\underline{ \cA^\varepsilon_2 }\pare{ D } W = \underline{ \cA^0_2 }\pare{ D_3 } \underline{W} = \sum_{n_3\in\bZ} \sum _{j=1,2,3} \psca{\cF \cA_2 (0,n_3) {W}^j (0,n_3) \,\big|\, f_j}_{\mathbb{C}^4} \ e^{i\cn_3 x_3} \ f_j = \pare{ \nu\partial_3^2 \underline{W^1}, \nu\partial_3^2 \underline{W^2}, 0, 0 }.
\end{equation*}
We remark that the operator $ \underline{\cA_2^\varepsilon} $ does not present oscillations, \emph{i.e.} it is independent of $ \varepsilon $.

\bigskip

Combining all the above calculations, we get the following decomposition of the system \eqref{eq:filt-sys}
\begin{lemma}
	\label{le:decomp_filtered_syst}
	Let $ V_0\in\Hs, \ s>5/2 $. We set
	\begin{equation*}
		\underline{V_0}\pare{ x_3 } = \frac{1}{4\pi^2 a_1 a_2} \int_{\T^2_h} V_0 \pare{ y_h, x_3 } dy_h,
	\end{equation*}
	and 
	\begin{equation*}
		\widetilde{V}_0 = V_0-\underline{V_0}.
	\end{equation*}
	Then, the local solution $U^\varepsilon $ of \eqref{eq:filt-sys} can be written as the superposition
	\begin{equation*}
		U^\varepsilon = \tU^\varepsilon + \underline{U^\varepsilon}, 
	\end{equation*}
	where $ \tU^\varepsilon $ and $ \underline{U^\varepsilon} $ are local solutions of the equations
	\begin{equation}
		\label{eq:eq_Utilde_epsilon}
		\left\lbrace
		\begin{aligned}
			& \partial_t \tU^\varepsilon + \widetilde{ \cQ }^\varepsilon_1 \pare{ \tU^\varepsilon, \tU^\varepsilon } + \widetilde{ \cQ }^\varepsilon_2 \pare{ U^\varepsilon, U^\varepsilon } - \widetilde{ \cA^\varepsilon_2 }\pare{ D } \tU^\varepsilon=0,\\
			& \dive \tU^\varepsilon =0,\\
			& \left. \tU^\varepsilon\right|_{t=0}=\widetilde{V}_0,
		\end{aligned}
		\right.
	\end{equation}
	and
	\begin{equation}
		\label{eq:eq_Uunderline_epsilon}
		\left\lbrace
		\begin{aligned}
			& \partial_t \underline{U^\varepsilon}+ \underline{\cQ}^\varepsilon \pare{ \tU^\varepsilon, \tU^\varepsilon }- \underline{\cA^0_2}\pare{ D_3 } \underline{U^\varepsilon}=0, \\
			& \left. \underline{U^\varepsilon}\right|_{t=0} = \underline{V_0}.
		\end{aligned}
		\right.
	\end{equation}
\end{lemma}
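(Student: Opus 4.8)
The plan is to exploit the orthogonal splitting $L^2_\sigma = \widetilde{L^2_\sigma} \oplus \underline{L^2_\sigma}$ established in Section \ref{sec:linear_problem}, together with the explicit Fourier computations of $\cQ^\varepsilon$ and $\cA_2^\varepsilon(D)$ carried out just above the statement, and simply to read off what each projection of the filtered system \eqref{eq:filt-sys} becomes. First I would recall that, by Proposition \ref{prop:Projection}, the projections $\Pi_{\widetilde{L^2_\sigma}}$ and $\Pi_{\underline{L^2_\sigma}}$ commute with the time derivative (they are constant-coefficient Fourier multipliers) and with $\cA_2^\varepsilon(D)$, so that applying them to \eqref{eq:filt-sys} yields
\begin{align*}
&\partial_t \widetilde{U^\varepsilon} + \widetilde{\cQ^\varepsilon}\pare{U^\varepsilon, U^\varepsilon} - \widetilde{\cA_2^\varepsilon}(D)\widetilde{U^\varepsilon} = 0,\\
&\partial_t \underline{U^\varepsilon} + \underline{\cQ^\varepsilon}\pare{U^\varepsilon, U^\varepsilon} - \underline{\cA_2^\varepsilon}(D)\underline{U^\varepsilon} = 0,
\end{align*}
with the initial data split as $\widetilde{V}_0$ and $\underline{V_0}$ exactly because $V_0 = \widetilde{V}_0 + \underline{V_0}$ is the decomposition \eqref{eq:DecompV}. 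Here I have already used that $\widetilde{\cA_2^\varepsilon}(D)U^\varepsilon = \widetilde{\cA_2^\varepsilon}(D)\widetilde{U^\varepsilon}$ and $\underline{\cA_2^\varepsilon}(D)U^\varepsilon = \underline{\cA_2^0}(D_3)\underline{U^\varepsilon}$, which are precisely the two displayed identities for $\widetilde{\cA_2^\varepsilon}$ and $\underline{\cA_2^\varepsilon}$ derived above.

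The substantive point is to check that the quadratic terms decouple in the way claimed: that $\widetilde{\cQ^\varepsilon}\pare{U^\varepsilon, U^\varepsilon}$ reduces to $\widetilde{\cQ}^\varepsilon_1\pare{\widetilde{U^\varepsilon}, \widetilde{U^\varepsilon}} + \widetilde{\cQ}^\varepsilon_2\pare{U^\varepsilon, U^\varepsilon}$, and that $\underline{\cQ^\varepsilon}\pare{U^\varepsilon, U^\varepsilon}$ depends only on $\widetilde{U^\varepsilon}$, i.e. equals $\underline{\cQ}^\varepsilon\pare{\widetilde{U^\varepsilon}, \widetilde{U^\varepsilon}}$. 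For this I would insert $U^\varepsilon = \widetilde{U^\varepsilon} + \underline{U^\varepsilon}$ into the bilinear form and expand using bilinearity into four pieces: $\widetilde{\cdot}\otimes\widetilde{\cdot}$, $\widetilde{\cdot}\otimes\underline{\cdot}$, $\underline{\cdot}\otimes\widetilde{\cdot}$, and $\underline{\cdot}\otimes\underline{\cdot}$. The term $\underline{U^\varepsilon}\otimes\underline{U^\varepsilon}$ involves only frequencies with $k_h = m_h = 0$, hence $n_h = 0$; such a term is the gradient-part of a product of two $x_3$-dependent divergence-free fields and, as observed right after the formula for $\widetilde{\cQ}^\varepsilon_2$, cannot contribute to $\widetilde{L^2_\sigma}$ — and moreover $\underline{u^\varepsilon}\cdot\nabla\underline{U^\varepsilon} = \underline{u}^{\varepsilon,3}\partial_3 \underline{U^\varepsilon} = 0$ since $\underline{u}^{\varepsilon,3}\equiv 0$, so this piece vanishes identically. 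The two mixed pieces $\widetilde{\cdot}\otimes\underline{\cdot}$ and $\underline{\cdot}\otimes\widetilde{\cdot}$ are exactly what $\widetilde{\cQ}^\varepsilon_2$ collects (its defining sum runs over $(0,k_3) + m = n$ with $m_h\neq 0$, symmetrized in the two arguments via $\bS$), while $\widetilde{\cdot}\otimes\widetilde{\cdot}$ is exactly $\widetilde{\cQ}^\varepsilon_1$ (its sum runs over $k+m=n$ with $k_h, m_h\neq 0$). For the $\underline{L^2_\sigma}$ component one argues analogously: projecting onto $F_j(n_3,\cdot)$ forces $n_h = 0$, hence $k_h = -m_h$, and the surviving resonant interactions are those with $k_h, m_h\neq 0$ (a purely horizontal-oscillation self-interaction), giving $\underline{\cQ}^\varepsilon\pare{\widetilde{U^\varepsilon}, \widetilde{U^\varepsilon}}$; the mixed and pure-average contributions vanish for the same reason as before ($\underline{u}^{\varepsilon,3}\equiv 0$).

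I expect the only genuinely delicate step to be the bookkeeping that matches the four expanded pieces against the precise index sets in the definitions of $\widetilde{\cQ}^\varepsilon_1$, $\widetilde{\cQ}^\varepsilon_2$ and $\underline{\cQ}^\varepsilon$ — in particular keeping track of which interactions are automatically excluded because the corresponding Fourier coefficient lies outside $\widetilde{L^2_\sigma}$ or $\underline{L^2_\sigma}$, and verifying that the symmetrization operator $\bS$ correctly accounts for both orderings of a $\widetilde{\cdot}\otimes\underline{\cdot}$ interaction so that $\widetilde{\cQ}^\varepsilon_2$ need only be written once. Everything else is a direct consequence of applying the two mutually orthogonal projectors to \eqref{eq:filt-sys} and of the structural fact $a^3 = 0$ for $a = a(x_3)$ noted in Section \ref{sec:linear_problem}. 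Finally, local existence and uniqueness of $\widetilde{U^\varepsilon}$ and $\underline{U^\varepsilon}$ on the common time interval follows from Theorem \ref{thm:local} applied to $U^\varepsilon$ together with the fact that the projections are continuous on $\Hs$, so that $\widetilde{U^\varepsilon} = \Pi_{\widetilde{L^2_\sigma}}U^\varepsilon$ and $\underline{U^\varepsilon} = \Pi_{\underline{L^2_\sigma}}U^\varepsilon$ inherit the regularity $\cC([0,T];\Hs)\cap \cC^1([0,T];H^{s-1})$.
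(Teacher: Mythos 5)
Your proposal is correct and follows the same route the paper takes implicitly: the paper does not write a standalone proof of this lemma but states it as a direct consequence of the preceding explicit Fourier-side computations of $\widetilde{\cQ}^\varepsilon_1$, $\widetilde{\cQ}^\varepsilon_2$, $\underline{\cQ}^\varepsilon$ and $\cA^\varepsilon_2$, and you correctly identify that applying the two orthogonal Fourier-multiplier projections $\Pi_{\widetilde{L^2_\sigma}}$, $\Pi_{\underline{L^2_\sigma}}$ to \eqref{eq:filt-sys} and reading off the summation constraints in those formulas yields \eqref{eq:eq_Utilde_epsilon}--\eqref{eq:eq_Uunderline_epsilon}. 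One small imprecision in your last paragraph: when projecting onto $\underline{L^2_\sigma}$, the mixed pieces $\widetilde{\cdot}\otimes\underline{\cdot}$ and $\underline{\cdot}\otimes\widetilde{\cdot}$ do not vanish because $\underline{u}^{\varepsilon,3}\equiv 0$; they are simply absent from that projection because the convolution constraint $k_h+m_h=n_h=0$ forces $k_h$ and $m_h$ to be both zero or both nonzero, so no mixed interaction can contribute there. The $\underline{u}^{\varepsilon,3}\equiv 0$ observation is only needed to kill the pure-average self-interaction.
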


\section{The limit system} \label{se:lim_syst}

In this short section, the convergence of suitable subsequences of local strong solutions  $\pare{U^\varepsilon}_{\varepsilon > 0}$ of the system \eqref{eq:filt-sys} will be put in evidence. Moreover, using the similar methods as in \cite{Gallagher_schochet} or \cite{Scrobo_Froude_periodic}, we can determine the systems which describe the evolution of such limits. The explicit formulation of these limiting systems will be given in the next section.
We first introduce the limit forms $\widetilde{\cQ}_1$, $\widetilde{\cQ}_2$, $\underline{\cQ}$, $\widetilde{\cA^0_2}$ and $\underline{\cA^0_2}$ such that
\begin{equation}
	\label{eq:limit_cQ1} \widetilde{\mathcal{Q}}_1 (V_1, V_2) = \sum_{\substack{n\in\bZ^3 \\ n_h\neq 0}} \sum_{\substack{\omega^{a,b,c}_{k,m,n}=0\\k+m=n \\ k_h, m_h, n_h\neq 0 \\ a,b,c \in \set{ 0,\pm}}} \psca{\widehat{\mathbb{P}}(n) \begin{pmatrix} n \\ 0 \end{pmatrix} \cdot \bS \pare{V_1^{a} (k)  \otimes  V_2^b (m)} \,\Big|\, e_c(n)}_{\mathbb{C}^4}\; e^{i\cn\cdot x} \ e_c(n), 
\end{equation}
\begin{align}
	\label{eq:limit_cQ2} \widetilde{ \cQ}_2 \pare{V_1, V_2} &= \sum_{\substack{n\in\bZ^3 \\ n_h\neq 0}} \sum_{\substack{\pare{0,k_3} + m=n\\ m_h, n_h \neq 0\\ \widetilde{\omega}^{b,c}_{m,n}=0\\b, c \in \set{0,\pm}\\j=1,2,3}} \psca{\widehat{\mathbb{P}}(n) \begin{pmatrix} n \\ 0 \end{pmatrix} \cdot \bS \pare{V_1^j (0,k_3)  \otimes  V_2^b (m)} \Big| e_c(n)}_{\mathbb{C}^4}\; e^{i\cn\cdot x} \ e_c(n) \\
	& \ + \sum_{\substack{n\in\bZ^3 \\ n_h\neq 0}} \sum_{\substack{\pare{0,k_3} + m=n\\m_h, n_h \neq 0\\ \widetilde{\omega}^{b,c}_{m,n}=0\\b, c \in \set{0,\pm}\\j=1,2,3}} \psca{\widehat{\mathbb{P}}(n) \begin{pmatrix} n \\ 0 \end{pmatrix} \cdot \bS \left( V_1^{b} (m)  \otimes  {V}_2^j (0,k_3)\right) \,\Big|\, e_c(n)}_{\mathbb{C}^4}\; e^{i\cn\cdot x} \ e_c(n), \notag
\end{align}
\begin{equation}
	\label{eq:limit_cQuline} \underline{\cQ} \pare{ V_1, V_2 } = \sum_{n_3\in\bZ} \sum_{\substack{k+m=(0,n_3) \\ k_h, m_h \neq 0\\ \omega^a (k)+\omega^b (m)=0 \\ a,b \in \set{ 0,\pm}\\ j=1,2,3}} \psca{\widehat{\bP}(0,n_3) \begin{pmatrix} 0 \\ 0 \\ \cn_3 \\ 0 \end{pmatrix} \cdot \bS \pare{V_1^a (k)  \otimes V_2^b (m)} \,\Big|\, f_j}_{\mathbb{C}^4}\; e^{i\cn_3 x_3} \ f_j,
\end{equation}
\begin{equation}
	\label{eq:limit_At} \widetilde{\cA^0_2} (D) W = \sum_{\substack{n\in\bZ^3 \\ n_h\neq 0}} \sum_{\substack{\omega^{a,b}_n=0\\ a,b\in \set{0,\pm}}} \psca{\cF \cA_2 (n) \widetilde{W}^a (n) \,\big|\, e_b(n)}_{\mathbb{C}^4} e^{i\cn\cdot x} \ e_b(n),
\end{equation}
and
\begin{equation}
	\label{eq:limit_Au} \underline{ \cA^0_2} (D) W = \sum_{n_3\in\bZ} \sum_{j=1,2,3} \ps{\cF \cA(0,n_3) W^j(0,n_3)}{f_j}_{\bC^4} e^{i\cn_3 x_3} \ f_j. 
\end{equation}
Using the same standard method of the non-stationary phases as in \cite{Gallagher_schochet}, \cite{grenierrotatingeriodic}, \cite{paicu_rotating_fluids} or \cite{Scrobo_primitive_horizontal_viscosity_periodic}, we can prove the following convergence result
\begin{lemma}
	\label{le:lim_smooth}
	Let $ V_1, V_2 $ and $ W $ be zero-average smooth vector fields. Then, we have the following convergence in the sense of distributions
	\begin{align*}
		\widetilde{\cQ}_k^\varepsilon \left( V_1, V_2 \right) \xrightarrow{\varepsilon\to 0} & \ \widetilde{\cQ}_k \left( V_1, V_2 \right), \qquad k=1,2 \\
		\underline{\cQ}^\varepsilon \left( V_1, V_2 \right) \xrightarrow{\varepsilon\to 0} & \ \underline{\cQ} \left( V_1, V_2 \right), \\
		\widetilde{\cA^\varepsilon_2} (D) W \xrightarrow{\varepsilon\to 0} & \ \widetilde{\cA^0_2} (D) W,\\
		\underline{\cA^\varepsilon_2} \left( D_3 \right) W \xrightarrow{\varepsilon\to 0} & \ \underline{\cA^0_2} \left( D_3 \right) W.
	\end{align*}
\end{lemma}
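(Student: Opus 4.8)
The plan is to treat each of the four convergences by the same mechanism: each of the filtered objects $\widetilde{\cQ}_k^\varepsilon$, $\underline{\cQ}^\varepsilon$, $\widetilde{\cA^\varepsilon_2}(D)$ is an absolutely convergent sum (for smooth, zero-average vector fields the Fourier coefficients decay faster than any polynomial) of terms of the shape $e^{i\frac{t}{\varepsilon}\Omega}\, c(k,m,n)\, e^{i\cn\cdot x}$, where $\Omega$ is one of the phase combinations $\omega^{a,b,c}_{k,m,n}$, $\widetilde{\omega}^{b,c}_{m,n}$, $\omega^{a,b}_{k,m}$ or $\omega^{a,b}_n$ defined above, and $c(k,m,n)$ is a bounded coefficient built from $\widehat{\bP}$, the eigenprojections and $\bS$. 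First I would fix a test function and reduce, by the rapid decay of the coefficients, to showing termwise convergence plus a dominated-convergence argument: the tail of the series is uniformly small in $\varepsilon$, so it suffices to pass to the limit in each individual summand and in any finite partial sum. This is why the smoothness hypothesis on $V_1,V_2,W$ is used — it gives the summable majorant independent of $\varepsilon$.

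Second, for a single summand the behaviour as $\varepsilon\to0$ is dictated entirely by whether the phase $\Omega$ vanishes. If $\Omega=0$ the factor $e^{i\frac{t}{\varepsilon}\Omega}\equiv1$ and the term simply persists into the limit; collecting exactly these terms reproduces the resonant sums \eqref{eq:limit_cQ1}, \eqref{eq:limit_cQ2}, \eqref{eq:limit_cQuline}, \eqref{eq:limit_At}. If $\Omega\neq0$, then against a test function $\psi(t,x)$ one integrates in time and invokes the non-stationary phase principle: $\int e^{i\frac{t}{\varepsilon}\Omega}\psi(t)\,dt = -\frac{\varepsilon}{i\Omega}\int e^{i\frac{t}{\varepsilon}\Omega}\partial_t\psi(t)\,dt \to 0$, since $\psi$ is smooth and compactly supported in time and $|\Omega|$ is bounded below on the (finite, once frequencies are truncated) set of nonresonant indices under consideration. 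One must note that the possible values of $\Omega$ are of the form $\pm\omega(k)\pm\omega(m)\mp\omega(n)$ with $\omega(n)=|\cn_h|/|\cn|\in(0,1]$, and that on any finite frequency window the nonzero ones are uniformly bounded away from zero; combined with the tail estimate from the previous step this gives the full convergence in $\mathcal D'$.

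Third, the only point requiring a little care is the bookkeeping for $\widetilde{\cQ}_2^\varepsilon$ and $\underline{\cQ}^\varepsilon$, where the phase involves the $\omega^j(0,k_3)$ associated to the $n_h=0$ modes $f_j$; since those eigenvalues are all zero, $\widetilde{\omega}^{b,c}_{m,n}$ reduces to $\omega^b(m)-\omega^c(n)$ and $\omega^{a,b}_{k,m}$ to $\omega^a(k)+\omega^b(m)$, and one checks that the resonance conditions printed in \eqref{eq:limit_cQ2} and \eqref{eq:limit_cQuline} are precisely the vanishing of these reduced phases — so no extra resonances are created or lost. For $\widetilde{\cA^\varepsilon_2}(D)W$ the phase is the two-index $\omega^{a,b}_n=\omega^a(n)+\omega^b(n)$ and the argument is identical but simpler, while $\underline{\cA^\varepsilon_2}(D_3)W$ carries no oscillation at all (as already observed after \eqref{eq:limit_Au}), so that convergence is an equality. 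I expect the main obstacle to be purely organizational rather than analytical: writing the dominated-convergence / uniform-tail estimate cleanly once, in a form that applies simultaneously to all four bilinear/linear expressions, so that the non-stationary phase step can then be quoted verbatim from \cite{Gallagher_schochet} or \cite{grenierrotatingeriodic} without reproving it for each case.
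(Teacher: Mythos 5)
Your proposal is correct and follows exactly the standard non-stationary phase route that the paper itself invokes (it gives no proof, only a reference to \cite{Gallagher_schochet}, \cite{grenierrotatingeriodic}, \cite{paicu_rotating_fluids}): truncate the frequency sum using the rapid decay of Fourier coefficients of smooth fields, pass to the limit termwise by separating resonant phases ($\Omega=0$, which survive and reconstitute \eqref{eq:limit_cQ1}--\eqref{eq:limit_Au}) from non-resonant ones ($\Omega\neq0$, killed by one integration by parts in $t$), and use that $|\Omega|$ is bounded below on any finite frequency window. One small remark: the reduction you mention in the third paragraph is already built into the paper's definitions of $\widetilde{\omega}^{b,c}_{m,n}$ and $\omega^{a,b}_{k,m}$ (they are defined directly as two-term phases, with no $\omega^j(0,k_3)$ to discard), so that step is a tautology rather than a check, but this does not affect the argument.
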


Let us now define the (limit) bilinear forms
\begin{equation}
	\label{eq:cQ_cA}
	\begin{aligned}
		\cQ \pare{V_1, V_2} & = \widetilde{\cQ}_1 \pare{V_1, V_2} + \widetilde{\cQ}_2 \pare{V_1, V_2} + \underline{\cQ} \pare{V_1, V_2},\\
		\cA^0_2 (D) W  & = \widetilde{\cA^0_2} (D) W + \underline{\cA^0_2} (D) W.
 	\end{aligned}
\end{equation}
Then, the limit dynamics of \eqref{PBSe} as $\varepsilon\to 0$ can be described as follows
\begin{prop} 
	\label{pr:compact_strong} Let $V_0 \in \Hs$, $s>5/2$ and $T\in \left[0,T^\star \right[$, where $T^\star > 0$ is defined in Theorem \ref{thm:local}. The sequence $\left( U^\varepsilon \right)_{\varepsilon>0}$ of local strong solutions of \eqref{eq:filt-sys}, which is uniformly bounded in $ \cC \left( \left[0,T\right]; \Hs \right) $, is compact in the space $ \cC \left( \left[0,T\right]; H^{\sigma} \left( \T^3 \right) \right) $ where $ \sigma\in \left( s-2, s \right) $. Moreover each limit point $U$ of $ \left( U^\varepsilon \right)_{\varepsilon>0}$ solves the following limit equation
	\begin{equation}
		\label{eq:limit_system}
		\tag{{S}$_0 $}
		\left\lbrace
		\begin{aligned}
			&\partial_t  U + \mathcal{Q} \left(  U,  U \right) - \cA^0_2 (D)  U=0,\\
			& \left.  U \right|_{t=0}=  V_0,
		\end{aligned}
		\right.
	\end{equation}
	\emph{a.e.} in $\T^3\times [0,T]$, where $\mathcal{Q}$ and $\cA^0_2 (D)$ are given in \eqref{eq:cQ_cA}.
\end{prop}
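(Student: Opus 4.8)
The plan is to prove Proposition~\ref{pr:compact_strong} in two stages: first establish the compactness of $\pare{U^\varepsilon}_{\varepsilon>0}$ in $\cC\pare{[0,T];H^\sigma\pare{\T^3}}$ for $\sigma\in\pare{s-2,s}$, then pass to the limit in the filtered system \eqref{eq:filt-sys} to identify every cluster point with a solution of \eqref{eq:limit_system}. For the compactness, I would start from the uniform bound $\norm{U^\varepsilon}_{\cC\pare{[0,T];\Hs}}\leqslant C$, which is inherited from Theorem~\ref{thm:local} and the fact that $\cL\pare{\pm t/\varepsilon}$ is an $L^2_\sigma$-isometry commuting with $\pare{-\Delta}^{s/2}$ (it is a Fourier multiplier of modulus one on each mode). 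From the equation in \eqref{eq:filt-sys}, $\dd_t U^\varepsilon = -\cQ^\varepsilon\pare{U^\varepsilon,U^\varepsilon}+\cA_2^\varepsilon\pare{D}U^\varepsilon$; using the product estimates in $H^s$ (with $s>5/2$, so $H^{s-1}$ is a Banach algebra in $3$D, hence controls the transport term $\cL\pare{t/\varepsilon}\bP[\,\cdot\,]$ which is bounded on every $H^\sigma$), one gets $\dd_t U^\varepsilon$ uniformly bounded in $\cC\pare{[0,T];H^{s-2}}$. Thus $\pare{U^\varepsilon}$ is bounded in $\cC\pare{[0,T];\Hs}$ with equicontinuous time-derivative in $H^{s-2}$; Ascoli together with the compact embedding $\Hs\hookrightarrow\hookrightarrow H^\sigma$ for $\sigma<s$ (on the compact manifold $\T^3$), in the form of the Aubin--Lions--Simon lemma, yields a subsequence converging strongly in $\cC\pare{[0,T];H^\sigma}$ for any $\sigma\in\pare{s-2,s}$.

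The second stage is to pass to the limit in each term of \eqref{eq:filt-sys} along such a converging subsequence $U^{\varepsilon_k}\to U$. The linear term $\cA_2^\varepsilon\pare{D}U^\varepsilon$ splits, via Lemma~\ref{le:decomp_filtered_syst}, into $\widetilde{\cA_2^\varepsilon}\pare{D}\widetilde{U^\varepsilon}$ and the $\varepsilon$-independent $\underline{\cA_2^0}\pare{D_3}\underline{U^\varepsilon}$; for the oscillating part one writes $\widetilde{\cA_2^\varepsilon}\pare{D}\widetilde{U^\varepsilon}-\widetilde{\cA_2^0}\pare{D}\widetilde U = \bra{\widetilde{\cA_2^\varepsilon}\pare{D}-\widetilde{\cA_2^0}\pare{D}}\widetilde U + \widetilde{\cA_2^\varepsilon}\pare{D}\pare{\widetilde{U^{\varepsilon_k}}-\widetilde U}$, the first bracket $\to 0$ in distributions by Lemma~\ref{le:lim_smooth} (after a density argument, smoothing $U$ and using the uniform $H^s$ bound to absorb the error), the second $\to 0$ by the strong $H^\sigma$ convergence and the uniform boundedness of the multipliers $e^{i\frac t\varepsilon\omega^{a,b}_n}\cF\cA_2(n)$ on $H^{\sigma-2}$. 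The nonlinear term $\cQ^\varepsilon\pare{U^\varepsilon,U^\varepsilon}$ is treated the same way by bilinearity: $\cQ^\varepsilon\pare{U^{\varepsilon_k},U^{\varepsilon_k}}-\cQ^\varepsilon\pare{U,U} = \cQ^\varepsilon\pare{U^{\varepsilon_k}-U,U^{\varepsilon_k}}+\cQ^\varepsilon\pare{U,U^{\varepsilon_k}-U}$, which vanishes in $\cC\pare{[0,T];H^{\sigma-2}}$ because $\cQ^\varepsilon$ maps $H^\sigma\times H^\sigma\to H^{\sigma-1}$ uniformly in $\varepsilon$ (again $\cL$ acts boundedly and the transport form loses one derivative), while $\cQ^\varepsilon\pare{U,U}\to\cQ\pare{U,U}$ in distributions by Lemma~\ref{le:lim_smooth} and a density argument. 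Since $\dd_t U^{\varepsilon_k}$ is bounded in $\cC\pare{[0,T];H^{s-2}}$, it converges to $\dd_t U$ in the sense of distributions, and $\dive u = 0$ passes to the limit trivially; hence $U$ solves \eqref{eq:limit_system} a.e.\ in $\T^3\times[0,T]$, with the initial datum $V_0$ recovered from the $\cC\pare{[0,T];H^\sigma}$ convergence at $t=0$.

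The main obstacle is the convergence $\cQ^\varepsilon\pare{U,U}\to\cQ\pare{U,U}$ when $U$ is only the (finite-regularity) limit point rather than a fixed smooth field: Lemma~\ref{le:lim_smooth} is stated for smooth zero-average vector fields, so one must run a careful density/non-stationary-phase argument. Concretely, I would approximate $U$ in $\cC\pare{[0,T];H^\sigma}$ by $U_N=\bra{\text{spectral truncation at }2^N}U$, apply Lemma~\ref{le:lim_smooth} to the finitely many surviving resonant triads for fixed $N$ (for which $\omega^{a,b,c}_{k,m,n}\neq 0$ gives a genuine oscillation and integration by parts in $t$ against a test function produces an $O(\varepsilon)$ bound with constant depending on $N$), and then control $\cQ^\varepsilon\pare{U,U}-\cQ^\varepsilon\pare{U_N,U_N}$ and $\cQ\pare{U,U}-\cQ\pare{U_N,U_N}$ uniformly in $\varepsilon$ by the bilinear $H^\sigma\times H^\sigma\to H^{\sigma-1}$ bound, letting $N\to\infty$ after $\varepsilon\to 0$. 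One subtlety deserving care is that the resonant set entering $\widetilde{\cQ}_2$ and $\underline{\cQ}$ involves interactions with the non-oscillating modes $\pare{0,k_3}$ coming from the nonzero horizontal average — exactly the novelty of point~\eqref{enumerate:punto1} in the introduction — so one must check that these ``zero-frequency'' contributions are handled correctly by the phase $\widetilde\omega^{b,c}_{m,n}$ and do not spoil the distributional limit; this is where the present argument genuinely departs from \cite{Scrobo_Froude_periodic}.
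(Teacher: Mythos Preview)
Your proof is correct and follows essentially the same route as the paper: uniform $\cC\pare{[0,T];\Hs}$ bounds from Theorem~\ref{thm:local} plus the isometry property of $\cL$, a uniform $\cC\pare{[0,T];H^{s-2}}$ bound on $\partial_t U^\varepsilon$ from the bilinear/linear estimates, Aubin--Lions for compactness, and then passage to the limit via Lemma~\ref{le:lim_smooth} combined with a mollification/density argument to handle the finite regularity of $U$. Your spelling-out of the spectral truncation $U_N$ and the double limit $\varepsilon\to 0$ then $N\to\infty$ is exactly what the paper means by ``mollifying the data as in \cite{grenierrotatingeriodic}, \cite{schochet}''.

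One small remark on your final paragraph: the interactions with the horizontal-average modes $\pare{0,k_3}$ are already absorbed into the statement of Lemma~\ref{le:lim_smooth} (they appear in $\widetilde{\cQ}_2^\varepsilon$ and $\underline{\cQ}^\varepsilon$, whose phases $\widetilde{\omega}^{b,c}_{m,n}$ and $\omega^{a,b}_{k,m}$ are treated by the same non-stationary-phase mechanism), so no additional care is needed here. The genuine departure from \cite{Scrobo_Froude_periodic} for \emph{this} proposition is rather point~\eqref{enumerate:punto3}: the absence of uniform parabolic smoothing on the density component means one cannot use the $L^2_tH^{s+1}_x$ regularity exploited in \cite[Lemma 3.3]{Scrobo_Froude_periodic}, which is why the argument must rely purely on the $\cC_tH^s_x$ bound and Aubin--Lions.
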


\begin{proof}
The proof of the compactness of the sequence $ \left( U^\varepsilon \right)_{\varepsilon>0} $ is a  standard argument. Thanks to Theorem  \ref{thm:local} we know that $ \left( U^\varepsilon \right)_{\varepsilon>0} $ is uniformly bounded in $\cC \left( \left[0,T\right]; \Hs \right)$. Since
\begin{equation*}
	\left\| \cQ^\varepsilon \left( U^\varepsilon, U^\varepsilon \right) \right\|_{H^{s-1}} \leqslant C \left\| U^\varepsilon \right\|_{H^s}^2,
\end{equation*}
and
\begin{equation*}
	\left\| A^\varepsilon_2 (D) U^\varepsilon \right\|_{H^{s-2}} \leqslant C \left\| U^\varepsilon \right\|_{H^s},
\end{equation*}
we deduce that $\left( \partial_t U^\varepsilon \right)_{\varepsilon>0}$ is uniformly bounded in $\cC \left( \left[0,T\right]; H^{s-2} \right)$. Remarking that
\begin{equation*}
	\cC \left( \left[0,T\right]; H^{s-2} \right) \hra L^1 \left( \left[0,T\right]; H^{s-2} \right),
\end{equation*}
we can hence apply Aubin-Lions lemma \cite{Aubin63} to deduce the compactness of the sequence.

Finally, we need to prove that a limit point $U$ (weakly) solves the limit system \eqref{eq:limit_system}. We remark that Lemma \ref{le:lim_smooth} cannot be directly applied since the sequence $\left( U^\varepsilon \right)_{\varepsilon >0}$ is not sufficiently regular. However, by mollifying the data as in \cite{grenierrotatingeriodic}, \cite{schochet} or \cite{Scrobo_primitive_horizontal_viscosity_periodic}, we can deduce that $U$ solves \eqref{eq:limit_system} in $\cD'\pare{\T^3\times [0, T]}$. We choose now $\overline{\sigma}\in \pare{\frac{5}{2}, s}$. Since $ U\in \cC\pare{[0, T]; H^{\overline{\sigma}}}$, Sobolev embeddings imply that $U\in \cC\pare{[0, T]; \cC^{1, 1}}$, and so it solves the system \eqref{eq:limit_system} \emph{a.e.}  in $\T^3\times [0,T]$
\end{proof}

\begin{rem}
	A similar result to Proposition \ref{pr:compact_strong} was proved in \cite[Lemma 3.3]{Scrobo_Froude_periodic}. We underline though that the proof of \cite[Lemma 3.3]{Scrobo_Froude_periodic} cannot be adapted to the present setting since it strongly used the regularity induced by the uniform parabolic smoothing effects.  
\end{rem}

\section{Explicit formulations of the limit system} \label{sec:decomposition_limit}

In this section, we determine the explicit formulation of the limit system \eqref{eq:limit_system}. More precisely, for each limit point $U$ of the sequence of solutions $\left( U^\varepsilon \right)_{\varepsilon>0}$ of \eqref{eq:filt-sys}, we decompose the bilinear term $\cQ(U,U)$ and the linear term $\cA_2^0(D)U$ by mean of the projections onto $\ker \PA$ defined in Proposition \ref{prop:Projection} and onto its orthogonal $\pare{\ker \PA}^\perp$. Such approach is generally used for hyperbolic symmetric systems with skew-symmetric perturbation in periodic domains, and we refer the reader to \cite{Gallagher_schochet}, \cite{Gallagher_singular_hyperbolic}, \cite{Gallagher_incompressible_limit}, \cite{paicu_rotating_fluids}, \cite{Scrobo_Froude_periodic} or \cite{Scrobo_primitive_horizontal_viscosity_periodic},  for instance, for some other related systems. Since the spectral properties of the operator $ \PA $ are rather different in the Fourier frequency subspaces $\set{ n_h=0 }$ and $\set{ n_h \neq 0 }$, as in Section \ref{sec:filt-sys}, we write the limit system \eqref{eq:limit_system} as the superposition of the following systems
\begin{equation}
	\label{eq:lim_syst_Uunderline}
	\left\lbrace
	\begin{aligned}
		&\partial_t \uU + \underline{\cQ} \pare{\tU, \tU} - \underline{\cA^0_2}\pare{D_3}\uU =0,\\
		&\uU^3\equiv 0,\\
		&\left. \uU\right|_{t=0}=\underline{V_0},
	\end{aligned}
	\right.
\end{equation}
and
\begin{equation}
	\label{eq:lim_syst_Utilde}
	\left\lbrace
	\begin{aligned}
		&\partial_t \tU + \widetilde{\cQ}_1 \left( \tU, \tU \right) +\widetilde{\cQ}_2 \left( U, U \right)- \widetilde{\cA^0_2}(D)\tU=0,\\
		&\dive \tU=0,\\
		&\left. \tU\right|_{t=0}=\widetilde{V}_0,
	\end{aligned}
	\right.
\end{equation}
where the limit terms $\underline{\cQ} \pare{\tU, \tU}$, $\underline{\cA^0_2}\pare{D_3}\uU$, 
$\widetilde{\cQ}_1 (\tU, \tU)$, $\widetilde{\cQ}_2 (\tU, \tU)$ and $\widetilde{\cA^0_2}(D)\tU$ are defined as in Lemma \ref{le:lim_smooth}. In what follows, we will separately study the systems \eqref{eq:lim_syst_Uunderline} and \eqref{eq:lim_syst_Utilde}.

\subsection{The dynamics of $\uU$}

We will prove the following result, which allows to simplify the system \eqref{eq:lim_syst_Uunderline}.
\begin{prop} \label{prop:limit_eq_Uunderline_local}
	Let ${V_0}\in \Hs, s>5/2$, and $V_0 = \underline{V_0} + \widetilde{V_0}$ as in Lemma \ref{le:decomp_filtered_syst}. Then, the horizontal average $\uU=\pare{\uu, 0, \underline{U^4}}$ of the solution $U$ of the limit system \eqref{eq:limit_system} solves the  homogeneous diffusion system \eqref{eq:Uunder_heat2D} a.e. in $ \left[0,T\right] \times \T^1_{\v} $ for each $T\in \left[0, T^\star \right[$.
\end{prop}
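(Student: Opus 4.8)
The plan is to analyze the equation \eqref{eq:lim_syst_Uunderline} by computing explicitly the nonlinear term $\underline{\cQ}(\tU,\tU)$ and the linear term $\underline{\cA^0_2}(D_3)\uU$, and then showing that the former vanishes identically while the latter reduces to the vertical heat operator acting on $\uu$, with the fourth component being merely transported trivially. First I would recall from Section~\ref{sec:filt-sys} that $\underline{\cA^0_2}(D_3)\uU = \pare{\nu\partial_3^2\uU^1, \nu\partial_3^2\uU^2, 0, 0}$, which immediately accounts for the $-\nu\partial_3^2\uu$ term and forces $\partial_t\uU^4 = $ (contribution of $\underline{\cQ}$ to the fourth component only). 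So the crux is to show $\underline{\cQ}(\tU,\tU) \equiv 0$, or at least that its first two components vanish and its fourth component vanishes as well (so that $\underline{U^4}$ is constant in time, equal to $\underline{V_0^4}$).

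The key computation is with the resonance condition appearing in \eqref{eq:limit_cQuline}: the sum is over $k+m=(0,n_3)$ with $k_h,m_h\neq 0$ and $\omega^a(k)+\omega^b(m)=0$. Since $k_h + m_h = 0$, we have $m_h = -k_h$, hence $|\cm_h| = |\ck_h|$. Also $k_3 + m_3 = n_3$. I would then examine the structure $\psca{\widehat{\bP}(0,n_3)\,{}^t(0,0,\cn_3,0)\cdot \bS(V_1^a(k)\otimes V_2^b(m))\,\big|\,f_j}_{\bC^4}$. The vector $\widehat{\bP}(0,n_3){}^t(0,0,\cn_3,0)$: applying the Leray symbol $\mathbb{P}_n$ at $n=(0,0,n_3)$ to ${}^t(0,0,\cn_3,0)$ — since the purely vertical frequency makes $\mathbb{P}^{(3)}$ annihilate the vertical component (the projection onto divergence-free kills the gradient direction, and ${}^t(0,0,\cn_3)$ is exactly the frequency direction), we get $\widehat{\bP}(0,n_3){}^t(0,0,\cn_3,0) = 0$ in the first three components, leaving only possibly the fourth component untouched — but that is $0$ here. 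Hence the entire bracket vanishes, so $\underline{\cQ}(\tU,\tU)\equiv 0$. This is the main mechanism; I expect the bookkeeping with the pairing against $f_j$ and the precise form of $\bS$ to be the only slightly delicate part, but no genuine obstacle.

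With $\underline{\cQ}(\tU,\tU)\equiv 0$ established, the system \eqref{eq:lim_syst_Uunderline} becomes $\partial_t\uU - \underline{\cA^0_2}(D_3)\uU = 0$ with $\uU^3\equiv 0$ and $\uU|_{t=0}=\underline{V_0}$, which componentwise reads $\partial_t\uu - \nu\partial_3^2\uu = 0$ and $\partial_t\underline{U^4}=0$. Together with the initial conditions $\uu|_{t=0}=\underline{V_0^h}$ and $\underline{U^4}|_{t=0}=\underline{V_0^4}$ this is exactly \eqref{eq:Uunder_heat2D}; in particular $\underline{U^4}(x_3,t)=\underline{V_0^4}$. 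Finally, to upgrade from a distributional identity to an a.e.\ identity on $[0,T]\times\T^1_\v$, I would invoke the regularity $U\in\cC([0,T];H^{\overline\sigma})$ for $\overline\sigma\in(5/2,s)$ already used in the proof of Proposition~\ref{pr:compact_strong}, which gives enough Sobolev embedding ($H^{\overline\sigma}\hra\cC^{1,1}$, hence the horizontal average is at least $\cC^2$ in $x_3$) for all terms in the equation to be continuous functions, so the equality holds pointwise a.e. The only point requiring care is that one must first know $U$ itself solves \eqref{eq:limit_system} a.e., which is precisely the content of Proposition~\ref{pr:compact_strong}; everything here is then a consequence of projecting that equation onto $\underline{L^2_\sigma}$.
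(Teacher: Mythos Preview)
Your argument for $\underline{\cQ}(\tU,\tU)\equiv 0$ contains a genuine error: you misparse the expression in \eqref{eq:limit_cQuline}. The Leray symbol $\widehat{\bP}(0,n_3)$ is \emph{not} applied to the frequency vector ${}^t(0,0,\cn_3,0)$; rather, the notation $\begin{pmatrix}\cn\\0\end{pmatrix}\cdot\bS(V_1^a(k)\otimes V_2^b(m))$ denotes the contraction $(\cn\cdot v_1^a(k))\,V_2^b(m)+(\cn\cdot v_2^b(m))\,V_1^a(k)$, and $\widehat{\bP}(0,n_3)$ is applied to this resulting $4$-vector. At $n=(0,0,n_3)$ the contraction reduces to $\cn_3\,U^{a,3}(k)\,U^b(m)$ (plus its symmetrization), which is precisely the form used in the paper's proof. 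Applying $\widehat{\bP}(0,n_3)$ to this vector only kills its third component (the projection onto divergence-free at a purely vertical frequency is projection onto the horizontal plane in the first three slots, identity on the fourth); the first, second and fourth components --- exactly those paired with $f_1,f_2,f_3$ --- survive and are generically nonzero. In other words, your short argument would prove that even the oscillating form $\underline{\cQ^\varepsilon}$ vanishes identically before taking $\varepsilon\to 0$, which is false.

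The actual proof in the paper proceeds by a case analysis on $(a,b)\in\{0,\pm\}^2$. Cases $(0,0)$, $(\pm,0)$, $(0,\pm)$ and $(\pm,\pm)$ are disposed of by observing that either $U^{0,3}\equiv 0$ or the resonance condition $\omega^a(k)+\omega^b(m)=0$ forces $k_h=0$ (contradicting $k_h\neq 0$). The delicate case is $(a,b)=(+,-)$ or $(-,+)$: there the resonance condition together with $k_h=-m_h$ forces $m_3=\pm k_3$, and when $m_3=k_3=n_3/2$ one must show that the surviving sums $B^{+,-}_{n_3}+B^{-,+}_{n_3}$ and $C^{+,-}_{n_3}+C^{-,+}_{n_3}$ cancel, which is done via a symmetry argument on the eigenvectors (Lemma~\ref{lem:BCpmmp}). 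This cancellation is the real content of the vanishing of $\underline{\cQ}$, and it is missing from your proposal. The remainder of your outline (reduction to \eqref{eq:Uunder_heat2D} once $\underline{\cQ}=0$ is known, and the a.e.\ upgrade via regularity) is fine.
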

\noindent In order to prove Proposition \ref{prop:limit_eq_Uunderline_local}, we only need to prove the following lemma
\begin{lemma}
	\label{lem:zero_limit_bilinear_hor_average}
	The following identity holds true
	\begin{equation*}
 		\underline{\cQ} \pare{\tU, \tU}=0.
	\end{equation*}
\end{lemma}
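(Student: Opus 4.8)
The plan is to expand the explicit formula \eqref{eq:limit_cQuline} and to check that every summand vanishes. First I would simplify the summand. Since the output frequency is purely vertical, $\widehat{\bP}(0,n_3)$ is the diagonal projector $\mathrm{diag}(1,1,0,1)$ for $n_3\neq0$ (and $\cn_3=0$ otherwise), and, writing $p,q$ for the velocity parts (first three components) of the modes $\tU^a(k)$ and $\tU^b(m)$, one has
\begin{equation*}
\begin{pmatrix} 0 \\ 0 \\ \cn_3 \\ 0 \end{pmatrix}\cdot\bS\pare{\tU^a(k)\otimes\tU^b(m)} = \cn_3\pare{p^3\,\tU^b(m)+q^3\,\tU^a(k)},
\end{equation*}
so that applying $\widehat{\bP}(0,n_3)$ and then testing against $f_1,f_2,f_3$ simply extracts the $\set{1,2,4}$-components of this $4$-vector. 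Hence $\underline{\cQ}\pare{\tU,\tU}$ is a sum, over $n_3\in\bZ$ and over quadruples $(k,m,a,b)$ with $k+m=(0,0,n_3)$, $k_h,m_h\neq0$, $a,b\in\set{0,\pm}$ and $\omega^a(k)+\omega^b(m)=0$, of terms proportional to $\cn_3\,e^{i\cn_3 x_3}$ times the $\set{1,2,4}$-components of $p^3\,\tU^b(m)+q^3\,\tU^a(k)$.

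The second step is to enumerate the resonant quadruples. Because $k+m=(0,0,n_3)$ forces $m_h=-k_h$, we get $\av{\cm_h}=\av{\ck_h}$, hence $\omega(k),\omega(m)>0$; checking the four sign patterns of $(a,b)$ shows that $\omega^a(k)+\omega^b(m)=0$ can hold only for $(a,b)=(0,0)$ and for $(a,b)\in\set{(+,-),(-,+)}$, and in the latter two cases it is equivalent to $\omega(k)=\omega(m)$, which together with $\av{\cm_h}=\av{\ck_h}$ gives $\av{\cm}=\av{\ck}$ and therefore $\cm_3=\pm\ck_3$.

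The third step disposes of each surviving family. For $(a,b)=(0,0)$ the eigenvector $e_0$ has vanishing third and fourth components, so $p^3=q^3=0$ and the term is zero. For $(a,b)\in\set{(+,-),(-,+)}$ with $\cm_3=-\ck_3$ one has $\cn_3=\ck_3+\cm_3=0$, so the prefactor annihilates the term. For $(a,b)\in\set{(+,-),(-,+)}$ with $\cm_3=\ck_3$, i.e.\ $m=(-k_h,k_3)$, a direct inspection of \eqref{eq:eigenvectors} shows that the horizontal reflection $k_h\mapsto-k_h$ combined with the change of polarization $a\mapsto b=-a$ leaves the first, second and fourth components of the unit eigenvector unchanged and reverses the sign of the third, i.e.\ $\pare{e_b(m)}^\ell=\pare{e_a(k)}^\ell$ for $\ell\in\set{1,2,4}$ and $\pare{e_b(m)}^3=-\pare{e_a(k)}^3$. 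Writing $\tU^a(k)=\psca{\widehat{\tU}(k),e_a(k)}_{\bC^4}e_a(k)$ and $\tU^b(m)=\psca{\widehat{\tU}(m),e_b(m)}_{\bC^4}e_b(m)$, this yields, for $\ell\in\set{1,2,4}$,
\begin{equation*}
p^3\,\pare{\tU^b(m)}^\ell + q^3\,\pare{\tU^a(k)}^\ell = \psca{\widehat{\tU}(k),e_a(k)}_{\bC^4}\,\psca{\widehat{\tU}(m),e_b(m)}_{\bC^4}\,\pare{e_a(k)}^3\pare{e_a(k)}^\ell\,(1-1) = 0 .
\end{equation*}
Thus every term of the sum vanishes and $\underline{\cQ}\pare{\tU,\tU}=0$; in fact the same computation gives $\underline{\cQ}(V_1,V_2)=0$ for arbitrary zero-average vector fields $V_1,V_2$.

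The whole argument is elementary and I anticipate no genuine obstacle. The one point deserving care is the bookkeeping of the eigenvector components in the third step: the exact cancellation rests on the precise way the horizontal reflection $k\mapsto(-k_h,k_3)$ interacts with the polarization switch $a\mapsto -a$, flipping the sign of exactly the third component, which is the one multiplied against the other factor in $p^3\,\tU^b(m)+q^3\,\tU^a(k)$.
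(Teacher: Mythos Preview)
Your proof is correct and follows essentially the same strategy as the paper: a case analysis on $(a,b)$, reducing to the only nontrivial case $(a,b)\in\{(+,-),(-,+)\}$ with $\cm_3=\ck_3$, and then exploiting the eigenvector identity $e_{-a}^\ell(-k_h,k_3)=e_a^\ell(k)$ for $\ell\in\{1,2,4\}$ together with $e_{-a}^3(-k_h,k_3)=-e_a^3(k)$. Your bookkeeping is cleaner: by retaining the symmetrized expression $p^3\,\tU^b(m)+q^3\,\tU^a(k)$ you obtain the cancellation term by term, whereas the paper first passes to the unsymmetrized quantities $\cJ_{a,b}(n_3)$ and then needs the auxiliary Lemma~\ref{lem:BCpmmp} to pair $\cJ_{+,-}$ with $\cJ_{-,+}$ and recover the same cancellation.
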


We recall that in Section \ref{sec:filt-sys}, we already introduce, for any vector field $V \in L^2_\sigma$, 
\begin{equation*}
	V^a (n) = \left(\left. \widehat{V} (n) \right| e_a(n)  \right)_{\mathbb{C}^4} \ e_a (n), \qquad  \forall\, a=0, \pm, \ \forall\, n=\pare{ n_h, n_3 } \in \mathbb{Z}^3, \ n_h\neq 0.
\end{equation*}
Here, we also denote
\begin{equation*}
	\widehat{V}^a (n)  = \left(\left. \widehat{V} (n) \right| e_a (n)  \right)_{\mathbb{C}^4},
\end{equation*}
and $V^{a,l}$, $l=1,2,3,4$ the $l$-th component and respectively $V^{a,h}$ the first two components of the vector $V^a$. Using the definition of $\underline{\cQ}$ and the particular form of the vector ${}^t(0,0,\cn_3,0)$ given in \eqref{eq:limit_cQuline}, we have
\begin{align*}
	\cF \pare{\underline{\cQ} \left( U, U \right)}(0,n_3) & = \sum_{\substack{k+m=(0,n_3) \\ k_h, m_h \neq 0\\ \omega^a (k)+\omega^b(m)=0 \\ a, b =0, \pm \\ j=1, 2, 3}} \psca{\widehat{\bP}(0,n_3) \, n_3 \, U^{a,3}(k) \, U^b (m) \,\Big\vert\, f_j}_{\mathbb{C}^4} f_j,\\
	& = \sum_{a,b=0,\pm} \sum_{j=1}^3 \sum_{\mathcal{I}_{a,b} (n_3)} \psca{\widehat{\bP}(0,n_3) \, n_3 \, U^{a, 3}(k) \, U^b(m) \,\Big\vert\, f_j}_{\mathbb{C}^4} f_j,
\end{align*}
where the set $\mathcal{I}_{a, b}(n_3)$ contains the following resonance frequencies
\begin{equation}
	\label{eq:Iab} \mathcal{I}_{a,b} (n_3) = \set{  (k,m)\in \mathbb{Z}^6, \ k_h, m_h\neq 0 \; \left| \; k+m= \left( 0,n_3 \right),   \; \omega^a (k) + \omega^b (m)=0\Big. \right.}.
\end{equation}
In order to prove Lemma \ref{lem:zero_limit_bilinear_hor_average}, we will show that, for each couple $\pare{ a, b }\in \set{ 0, \pm }^2$, the contribution
\begin{equation}
	\label{eq:Jab} \mathcal{J}_{a,b}(n_3) = \sum_{\mathcal{I}_{a,b} (n_3)} \widehat{\bP}(0,n_3) \, n_3 \, U^{a, 3}(k) \, U^b(m),
\end{equation}
is null, which implies that
\begin{equation}
	\label{eq:Q_come_Jab}
	\cF \pare{\underline{\cQ} \left( U, U \right)}(0,n_3) =  \sum_{a,b=0,\pm} \sum_{j=1}^3 \psca{\mathcal{J}_{a,b}(n_3) \,\Big\vert\, f_j}_{\mathbb{C}^4} f_j=0.
\end{equation}

\bigskip

\noindent \underline{\textit{Case 1: $(a,b)=(0,0)$}}. We have
\begin{equation*}
	\cJ_{0,0} (n_3)= \sum_{k+m = \left( 0,n_3 \right)} \widehat{\bP}(0,n_3) \, n_3 \, U^{0,3}(k) \, U^{0}(m) = 0,
\end{equation*}
since $ U^{0,3}\equiv 0 $ (see \eqref{eq:eigenvectors} and \eqref{ev_nh=0}).

\bigskip

\noindent \underline{\textit{Case 2: $(a,b)=(\pm,0)$ or $(a,b)=(0,\pm)$}}. If $(a,b)=(\pm,0)$, then,
\begin{equation*}
	\mathcal{J}_{\pm, 0} (n_3) = \sum_{\substack{ k+m = (0,n_3) \\ \omega^\pm (k)=0 }} \widehat{\bP}(0,n_3) \, n_3 \, U^{\pm,3}(k)   \, U^0(m).
\end{equation*}
The condition $ \omega^\pm (k)=0 $ implies that $ k_h \equiv 0 $, while the condition $ k+m = (0,n_3) $ implies that $ m_h \equiv 0 $. Then from \eqref{ev_nh=0}, we have $ U^{a, 3} (0,k_3)\equiv 0 $, which shows that $\mathcal{J}_{\pm, 0} (n_3)$ gives a null contribution in \eqref{eq:Q_come_Jab}. The same approach can be applied to the case $ \left( a, b \right)= \left( 0, \pm \right) $.

\bigskip

\noindent \underline{\textit{Case 3: $(a,b)=(+,+)$ or $(a,b) = (-,-)$}}. In this case, $\mathcal{J}_{a,b}(n_3)$ writes
\begin{equation*}
	\cJ_{\pm, \pm} (n_3) = \sum_{\substack{ k+m = (0,n_3) \\ \omega^\pm (k) + \omega^\pm (m)=0 }}  \widehat{\bP}(0,n_3) \, n_3 \, U^{\pm,3}(k) \, U^\pm(m).
\end{equation*}
Since $k+m = (0,n_3)$, we can set $\left| \ck_h \right|= \left| \cm_h \right|=\lambda$. We deduce from the constraint $ \omega^\pm (k) + \omega^\pm (m)=0 $ and the explicit formulation of the eigenvalues \eqref{eq:eigenvalues} that
\begin{equation*}
	\frac{\lambda}{\sqrt{\lambda^2 + \ck_3^2}} + \frac{\lambda}{\sqrt{\lambda^2 + \cm_3^2}}=0,
\end{equation*}
which implies that $ \lambda\equiv 0 $. Then the similar argument as in Cases 1 and 2 shows that $\cJ_{\pm, \pm} (n_3) = 0$.

\bigskip

\noindent \underline{\textit{Case 4: $(a,b)=(+,-)$ or $(a,b) = (-,+)$}}. This is the most delicate case to treat. We write
\begin{equation}
	\label{eq:bil_contr_pm_mp}
	\cJ_{\pm, \mp} (n_3)= \sum_{\substack{ k+m = (0,n_3) \\ \omega^\pm (k) = \omega^\pm (m) }} \widehat{\bP}(0,n_3) \, n_3 \, U^{\pm,3}(k)  \, U^{\mp}(m).
\end{equation}
The conditions $k+m = (0,n_3)$ and $\omega^\pm (k) = \omega^\pm (m)$ imply now that $k_h = -m_h$, $\left| \ck_h \right|= \left| \cm_h \right|=\lambda$ and
\begin{equation*}
	\frac{\lambda}{\sqrt{\lambda^2 + \ck_3^2}} = \frac{\lambda}{\sqrt{\lambda^2 + \cm_3^2}}.
\end{equation*}
Then, it is obvious that $ m_3=\pm k_3 $.

If $ m_3=-k_3 $ then the convolution constraint $ k_3+m_3=n_3 $ in \eqref{eq:bil_contr_pm_mp} implies that $ n_3\equiv 0 $, and hence there is no contributions of $\cJ_{\pm, \mp} (n_3)$ in \eqref{eq:Q_come_Jab}. So, we concentrate on the case where $ k_h = -m_h $ and $ k_3=m_3= \frac{n_3}{2}$ and we will deal with the interaction will be of the form, for any $n_3 \in 2 \mathbb{Z}$,
\begin{equation*}
	\cJ_{\pm, \mp} (n_3)= \sum_{m_h \in \mathbb{Z}^2} \widehat{\bP}(0,n_3) \, n_3 \, U^{\pm,3} \pare{-m_h , \frac{n_3}{2}}  \, {U}^{\mp} \pare{ m_h, \frac{n_3}{2} }.
\end{equation*}
For any $n_3 \in 2 \mathbb{Z}$, we set
\begin{align*}
	B^{\pm, \mp}_{n_3} & = \sum_{m_h \in \mathbb{Z}^2} n_3 \, {U}^{\pm,3} \left( -m_h , \frac{n_3}{2} \right)  \; {U}^{\mp, h} \left( m_h, \frac{n_3}{2} \right) , \\
	C^{\pm, \mp}_{n_3} & = \sum_{m_h \in \mathbb{Z}^2} n_3 \, {U}^{\pm,3} \left( -m_h , \frac{n_3}{2} \right)  \; {U}^{\mp, 4} \left( m_h, \frac{n_3}{2} \right).
\end{align*}
Taking into account the form of the vectors $f_j$ in \eqref{ev_nh=0}, we deduce that
\begin{equation*}
	\sum_{j=1}^3 \psca{\cJ_{+,-} (n_3) +\cJ_{-,+} (n_3) \, \vert \, f_j}_{\bC^4} f_j = \widehat{\bP}(0,n_3)  
	\begin{pmatrix}
		B^{+,-}_{n_3} + B^{-, +}_{n_3}\\
		0\\
		C^{+,-}_{n_3} + C^{-, +}_{n_3}
	\end{pmatrix}.
\end{equation*}
Then, we can prove that the sum $\cJ_{+,-} (n_3) +\cJ_{-,+} (n_3)$ have no contribution in \eqref{eq:Q_come_Jab} and conclude Case 4 if we prove the following lemma
\begin{lemma}
	\label{lem:BCpmmp} For any $n_3 \in 2 \mathbb{Z}$, we have the following identities
	\begin{equation}
		\label{eq:Bpmmp=0} B^{+,-}_{n_3} = -B^{-,+}_{n_3}, 
	\end{equation}
	and
	\begin{equation}
		\label{eq:Cpmmp=0} C^{+,-}_{n_3} = -C^{-,+}_{n_3}. 
	\end{equation}
\end{lemma}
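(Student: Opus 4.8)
The plan is to reduce both identities to a single relabelling $m_h \mapsto -m_h$ of the summation index, after writing each summand explicitly in terms of the scalar coefficients $\widehat U^{a}(n) := \pare{\widehat U(n)\,\big|\, e_a(n)}_{\bC^4}$ and of the entries of the eigenvectors $e_\pm$ given in \eqref{eq:eigenvectors}. One may assume $n_3 \neq 0$, for otherwise every summand carries the vanishing prefactor $n_3$ and there is nothing to prove; moreover, on the resonance set the already recorded constraint $k_h = -m_h \neq 0$ forces the sums defining $B^{\pm,\mp}_{n_3}$ and $C^{\pm,\mp}_{n_3}$ to range over $m_h \in \bZ^2 \setminus \set{0}$, so that no denominator $\av{\cm_h}$ vanishes. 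Since $U \in L^2\pare{\T^3}$, one has $\widehat U^{a}\pare{\cdot, n_3/2} \in \ell^2\pare{\bZ^2}$, and the scalar prefactors that will appear below are bounded in $m_h$; hence all the series converge absolutely by Cauchy--Schwarz and any rearrangement is legitimate.

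The first step is a direct reading of \eqref{eq:eigenvectors} at the two interacting frequencies $\pare{-m_h, n_3/2}$ and $\pare{m_h, n_3/2}$. For either of them the horizontal part of the frequency has modulus $\lambda := \av{\cm_h}$ and the full frequency has modulus $\rho := \bigl(\lambda^2 + (\cn_3/2)^2\bigr)^{1/2}$, both even in $m_h$. Then the third component of $e_\pm\pare{-m_h, n_3/2}$ equals $\mp\tfrac{i}{\sqrt 2}\tfrac{\lambda}{\rho}$, the horizontal part of $e_\pm\pare{m_h, n_3/2}$ equals $\pm\tfrac{i}{\sqrt 2}\tfrac{\cn_3}{2\lambda\rho}\cm_h$, and the fourth component of $e_\pm\pare{m_h, n_3/2}$ equals $\tfrac{1}{\sqrt 2}$. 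Substituting $U^{a,l}(n) = \widehat U^{a}(n)\,\pare{e_a(n)}_l$ into the definitions of $B^{\pm,\mp}_{n_3}$ and $C^{\pm,\mp}_{n_3}$, the product of the two sign-carrying entries collapses and one obtains
\begin{align*}
	B^{\pm,\mp}_{n_3} &= -\,\frac{n_3\,\cn_3}{4} \sum_{m_h \in \bZ^2 \setminus \set{0}} \frac{\cm_h}{\rho^2}\, \widehat U^{\pm}\pare{-m_h, \tfrac{n_3}{2}}\, \widehat U^{\mp}\pare{m_h, \tfrac{n_3}{2}}, \\
	C^{\pm,\mp}_{n_3} &= \mp\,\frac{i\,n_3}{2} \sum_{m_h \in \bZ^2 \setminus \set{0}} \frac{\lambda}{\rho}\, \widehat U^{\pm}\pare{-m_h, \tfrac{n_3}{2}}\, \widehat U^{\mp}\pare{m_h, \tfrac{n_3}{2}}.
\end{align*}
The key structural observation is that the scalar prefactor of the $B$-sum does not depend on the ordered pair $(\pm,\mp)$, whereas that of the $C$-sum changes sign under the swap $(\pm,\mp)\mapsto(\mp,\pm)$.

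Finally I would relabel $m_h \mapsto -m_h$ in the series defining $B^{-,+}_{n_3}$ and $C^{-,+}_{n_3}$. Under this relabelling $\lambda$, $\rho$ and $\cn_3$ are unchanged, $\cm_h$ changes sign, and the two scalar factors are exchanged: $\widehat U^{-}\pare{-m_h, \tfrac{n_3}{2}}\,\widehat U^{+}\pare{m_h, \tfrac{n_3}{2}}$ becomes $\widehat U^{-}\pare{m_h, \tfrac{n_3}{2}}\,\widehat U^{+}\pare{-m_h, \tfrac{n_3}{2}} = \widehat U^{+}\pare{-m_h, \tfrac{n_3}{2}}\,\widehat U^{-}\pare{m_h, \tfrac{n_3}{2}}$, which is precisely the product occurring in the $B^{+,-}_{n_3}$ and $C^{+,-}_{n_3}$ sums. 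For $B$ the needed sign $-1$ is then carried by $\cm_h \mapsto -\cm_h$, giving $B^{-,+}_{n_3} = -B^{+,-}_{n_3}$; for $C$ there is no $\cm_h$ factor and the sign comes instead from the intrinsic $\mp$ in the prefactor, giving $C^{-,+}_{n_3} = -C^{+,-}_{n_3}$. This is exactly \eqref{eq:Bpmmp=0}--\eqref{eq:Cpmmp=0}, which closes Case 4 and, together with Cases 1--3 already treated, finishes Lemma \ref{lem:zero_limit_bilinear_hor_average} and hence Proposition \ref{prop:limit_eq_Uunderline_local}. The whole argument is elementary; the two points requiring care are verifying that the $m_h$-dependent prefactors are genuinely \emph{even} in $m_h$ --- which is exactly why the preliminary reduction to $k_h = -m_h$, $k_3 = m_3 = n_3/2$ is indispensable --- and keeping track of which eigenvalue label ($+$ or $-$) sits at which of the two interacting frequencies, so that the signs in the explicit formulas above come out right.
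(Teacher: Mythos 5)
Your proof is correct and rests on the same underlying mechanism as the paper's: the change of summation variable $m_h\mapsto-m_h$ together with the sign/parity relations of the components of $e_\pm$ under $(m_h,\pm)\mapsto(-m_h,\mp)$ (which the paper records as \eqref{eq:bof1} and the identities $e_-^h(m_h,n_3/2)=e_+^h(-m_h,n_3/2)$, $e_+^3(-m_h,n_3/2)=-e_-^3(m_h,n_3/2)$). The only presentational difference is that where the paper handles $C$ by a direct relabelling but handles $B$ by first symmetrizing into a pointwise quantity $\beta=\beta^++\beta^-$ and showing each piece vanishes, you make every eigenvector entry explicit at the outset, so that both identities drop out of one and the same relabelling step: the $B$-prefactor $-\tfrac{n_3\cn_3}{4}\cm_h/\rho^2$ is independent of the ordered pair $(\pm,\mp)$ and odd in $m_h$, while the $C$-prefactor $\mp\tfrac{in_3}{2}\lambda/\rho$ is even in $m_h$ but flips sign under $(\pm,\mp)\mapsto(\mp,\pm)$. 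This is a cleaner packaging of the same argument rather than a genuinely different route.
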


\bigskip

\begin{proof}

Identity \eqref{eq:Cpmmp=0} is quite easy to prove. Indeed, we have
\begin{equation*}
	C^{\pm, \mp}_{n_3} = \sum_{m_h\in\bZ^2} n_3 \ e_\pm^3 \pare{ -m_h, \frac{n_3}{2} }  \widehat{U}^{\pm} \pare{ -m_h, \frac{n_3}{2} } \widehat{U}^{\mp} \pare{-m_h, \frac{n_3}{2} } .
\end{equation*}
Then,
\begin{multline}
	\label{eq:Cpm+Cmp}
	C^{+,-}_{n_3} + C^{-,+}_{n_3}= \sum_{m_h\in \bZ^2} \frac{n_3}{2}  \left[ e_\pm^3 \pare{ -m_h, \frac{n_3}{2} }  \widehat{U}^{\pm} \pare{ -m_h, \frac{n_3}{2} } \widehat{U}^{\mp} \pare{-m_h, \frac{n_3}{2}}\right.\\
	+ \left. \ e_\mp^3 \pare{ m_h, \frac{n_3}{2} }  \widehat{U}^{\mp} \pare{ m_h, \frac{n_3}{2} } \widehat{U}^{\pm} \pare{-m_h, \frac{n_3}{2}}\right].
\end{multline}

\noindent
The explicit expression of $e_\mp(n)$ in \eqref{eq:eigenvectors} yields
\begin{equation}
	\label{eq:bof1} e_\pm^3 \pare{ -m_h, \frac{n_3}{2} } = - e_\mp^3 \pare{ m_h, \frac{n_3}{2} },
\end{equation}
which, combined with \eqref{eq:Cpm+Cmp}, implies \eqref{eq:Cpmmp=0}.

\medskip

To prove Identity \eqref{eq:Bpmmp=0}, we consider the quantities
\begin{multline}
	\label{def_beta} \beta \left( m_h, n_3 \right) = \frac{n_3}{2} \left[ {U}^{+,3} \left( -m_h , \frac{n_3}{2} \right)  \; {U}^{-, h} \left( m_h, \frac{n_3}{2} \right) + {U}^{-,3} \left( -m_h , \frac{n_3}{2} \right)  \; {U}^{+, h} \left( m_h, \frac{n_3}{2} \right)\right.\\
	+ \left. {U}^{+,3} \left( m_h , \frac{n_3}{2} \right)  \; {U}^{-, h} \left( -m_h, \frac{n_3}{2} \right) + {U}^{-,3} \left( m_h , \frac{n_3}{2} \right)  \; {U}^{+, h} \left( -m_h, \frac{n_3}{2} \right) \right],
\end{multline}
which allows to write
\begin{equation}
	\label{Bpm} B^{+,-}_{n_3}+ B^{-,+}_{n_3} = \sum _{m_h \in \mathbb{Z}^2}  \beta \left( m_h, n_3 \right).
\end{equation}
Now, we decompose
\begin{equation*}
	\beta \left( m_h, n_3 \right)= \beta^+ \left( m_h, n_3 \right)+ \beta^- \left( m_h, n_3 \right), 
\end{equation*}
where
\begin{align*}
	\beta^+ \left( m_h, n_3 \right)= & \ \frac{n_3}{2} \left[ {U}^{+,3} \left( -m_h , \frac{n_3}{2} \right)  \; {U}^{-, h} \left( m_h, \frac{n_3}{2} \right) + {U}^{-,3} \left( m_h , \frac{n_3}{2} \right)  \; {U}^{+, h} \left( -m_h, \frac{n_3}{2} \right)  \right],\\
	 \beta^- \left( m_h, n_3 \right)= & \ \frac{n_3}{2} \left[ {U}^{-,3} \left( -m_h , \frac{n_3}{2} \right)  \; {U}^{+, h} \left( m_h, \frac{n_3}{2} \right) + {U}^{+,3} \left( m_h , \frac{n_3}{2} \right)  \; {U}^{-, h} \left( -m_h, \frac{n_3}{2} \right) \right].
\end{align*}
By definition, we have
\begin{multline*}
	{U}^{+,3} \left( -m_h , \frac{n_3}{2} \right)  \; {U}^{-, h} \left( m_h, \frac{n_3}{2} \right) = \psca{\widehat{U} \left( -m_h, \frac{n_3}{2} \right) \,\Big\vert\, e_+ \left( -m_h, \frac{n_3}{2} \right) }_{\mathbb{C}^4} e_+^3 \left( -m_h, \frac{n_3}{2} \right)\\
	\times \psca{\widehat{U} \left( m_h, \frac{n_3}{2} \right) \,\Big\vert\, e_- \left( m_h, \frac{n_3}{2} \right) }_{\mathbb{C}^4} e_-^h \left( m_h, \frac{n_3}{2} \right),
\end{multline*}
and
\begin{multline*}
	{U}^{-,3} \left( m_h , \frac{n_3}{2} \right)  \; {U}^{+, h} \left( -m_h, \frac{n_3}{2} \right) = \psca{ \widehat{U} \left( m_h, \frac{n_3}{2} \right) \,\Big\vert\, e_- \left( m_h, \frac{n_3}{2} \right) }_{\mathbb{C}^4} e_-^3 \left( m_h, \frac{n_3}{2} \right)\\
	\times \psca{ \widehat{U} \left( -m_h, \frac{n_3}{2} \right) \,\Big\vert\, e_+ \left( -m_h, \frac{n_3}{2} \right) }_{\mathbb{C}^4} e_+^h \left( -m_h, \frac{n_3}{2} \right).
\end{multline*}
The explicit formula \eqref{eq:eigenvectors} implies
\begin{align*}
 	e_-^h \left( m_h, \frac{n_3}{2} \right) = & \ e_+^h \left( -m_h, \frac{n_3}{2} \right) = A^h_{m_h, n_3}  ,\\
 	e_+^3 \left( -m_h, \frac{n_3}{2} \right) = & \ - e_-^3 \left( m_h, \frac{n_3}{2} \right) = A^3_{m_h, n_3} .
\end{align*}
Setting
\begin{equation*}
	C_{m_h, n_3}= \psca{ \widehat{U} \left( -m_h, \frac{n_3}{2} \right) \,\Big\vert\, e_+ \left( -m_h, \frac{n_3}{2} \right) }_{\mathbb{C}^4} \ \psca{ \widehat{U} \left( m_h, \frac{n_3}{2} \right) \,\Big\vert\, e_- \left( m_h, \frac{n_3}{2} \right) }_{\mathbb{C}^4},
\end{equation*}
we obtain
\begin{align*}
{U}^{+,3} \left( -m_h , \frac{n_3}{2} \right)  \; \widehat{U}^{-, h} \left( m_h, \frac{n_3}{2} \right) = & \ - C_{m_h, n_3} A^h_{m_h, n_3} A^3_{m_h, n_3},\\
{U}^{-,3} \left( m_h , \frac{n_3}{2} \right)  \; \widehat{U}^{+, h} \left( -m_h, \frac{n_3}{2} \right) = & \ C_{m_h, n_3} A^h_{m_h, n_3} A^3_{m_h, n_3},
\end{align*}
which imply
\begin{equation*}
	\beta^+ \left( m_h, n_3 \right) \equiv 0.
\end{equation*}
By the similar argument, we also get
\begin{equation*}
	\beta^- \left( m_h, n_3 \right) \equiv 0,
\end{equation*}
which yields
\begin{equation*}
	\beta \left( m_h, n_3 \right) \equiv 0.
\end{equation*}
Thus, Identity \eqref{Bpm} implies \eqref{eq:Bpmmp=0}

\end{proof}

\subsection{The dynamics of $ \tU $}

In the previous paragraph, the dynamics of $\bU$ is well understood and turns out to follow quite a simple heat equation. To complete the study of the limit system \eqref{eq:limit_system}, we now give an explicit expression of the system \eqref{eq:lim_syst_Utilde}. As in Lemma \ref{lem:DecompW} and Proposition \ref{prop:Projection} \eqref{KerPA2}, we will study the evolution of $ \tU $ as the superposition of
\begin{equation*}
	\tU=\bU+U_{\osc}.
\end{equation*}
The main technical difficulty of the study consists in giving a close formulation of the projection of the bilinear interactions, which was considered in \cite{Scrobo_Froude_periodic}. In what follows, we only mention the main steps of the study, without going into technical calculations.

\subsubsection{Derivation of the evolution of $ \bU $}

We recall that $ \bU $ is the projection of $ \tU $ onto the nonoscillating subspace generated by $\set{E_0(n,\cdot)}_n$. The derivation of the evolution of $ \bU $ can be done in three steps.

\bigskip

\noindent \underline{\textit{Step 1}}: We explicitly compute the projections of $ \widetilde{\cQ}_1 \pare{\tU, \tU} $ and $ \widetilde{\cQ}_2 \pare{U, U} $ onto $\text{Span}\set{E_0(n,\cdot)}$, \emph{i.e.}, for any $n\in\bZ^3$, $n_h \neq 0$, we compute the following quantities
\begin{align*}
 	\overline{\widetilde{\cQ}_1 \pare{\tU, \tU}} &= \sum_{\substack{n\in\bZ^3 \\ n_h\neq 0}} \ps{ \cF \widetilde{\cQ}_1 \pare{\tU, \tU}(n)}{e_0(n)}_{\bC^4} e^{i\cn\cdot x} \ e_0(n)\\
 	\overline{\widetilde{\cQ}_2 \pare{U, U}} &= \sum_{\substack{n\in\bZ^3 \\ n_h\neq 0}} \ps{ \cF \widetilde{\cQ}_2 \pare{U, U}(n)}{e_0(n)}_{\bC^4} e^{i\cn\cdot x} \ e_0(n).
\end{align*}

The projection of $ \widetilde{\cQ}_1 \left( \tU, \tU \right) $ is a mere horizontal transport interaction of elements in the kernel of $ \PA $ as it is showed in the following lemma, the proof of which can be found in \cite[Lemma 4.2]{Scrobo_Froude_periodic}.
\begin{lemma} \label{lem:proj_tQ1_e0}
 	The following identity holds true
 	\begin{equation*}
 		\overline{\widetilde{\cQ}_1 \pare{\tU, \tU}} = \begin{pmatrix} \uh\cdot\nh \uh \\ 0\\ 0 \end{pmatrix} + \begin{pmatrix} \nh \overline{p}_1 \\ 0\\ 0 \end{pmatrix},
 	\end{equation*}
 	where
 	\begin{align*}
 		\uh	 & = \nhp \left( -\Dh \right)^{-1} \left( -\partial_2 U^1 + \partial_1 U^2 \right), \\
 		\overline{p}_1 & =  \left( -\Dh \right)^{-1} \diveh \diveh \left( \uh \otimes \uh \right).
 	\end{align*}
\end{lemma}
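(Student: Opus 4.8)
The plan is to compute the projection $\overline{\widetilde{\cQ}_1 \pare{\tU,\tU}}$ directly from the explicit formula \eqref{eq:limit_cQ1}, exploiting the resonance condition $\omega^{a,b,c}_{k,m,n}=0$ together with the structure of the eigenvectors $e_\alpha$ in \eqref{eq:eigenvectors}. First I would restrict the sum defining $\widetilde{\cQ}_1 \pare{\tU,\tU}$ to the output mode $c=0$, since that is exactly the content of the projection onto $\text{Span}\set{E_0(n,\cdot)}$. The key algebraic observation is that $\omega^0\equiv 0$, so the resonance relation $\omega^a(k)+\omega^b(m)-\omega^c(n)=0$ with $c=0$ becomes $\omega^a(k)+\omega^b(m)=0$; combined with the analysis already carried out in Cases 1--4 of Lemma \ref{lem:zero_limit_bilinear_hor_average} (adapted to the present situation where $k_h,m_h,n_h\neq0$ rather than $n_h=0$), one sees that for $\pare{a,b}\neq\pare{0,0}$ the contributions either vanish or force $k_h=-m_h$, which is incompatible with $n_h=k_h+m_h\neq0$. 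Hence only the purely nonoscillating interaction $\pare{a,b}=\pare{0,0}$ survives.

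Next I would evaluate this surviving term. For $a=b=0$ we have $V^0(k)=\widehat{V}^0(k)e_0(k)$ with $e_0(k)=|\ck_h|^{-1}\pare{-\ck_2,\ck_1,0,0}^\intercal$, so $V^{0,3}\equiv0$ and $V^{0,4}\equiv0$; thus the tensor $\bS\pare{\tU^0(k)\otimes\tU^0(m)}$ involves only horizontal components, and the contraction with $\widehat{\bP}(n)\pare{\cn,0}^\intercal$ produces a horizontal vector field of the form $\uh\cdot\nh\uh$ up to the Leray pressure correction, where $\uh$ is precisely the nonoscillating horizontal part $\nhp\pare{-\Dh}^{-1}\pare{-\partial_2 U^1+\partial_1 U^2}$. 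The fourth component of the output is zero because $e_0(n)$ has zero fourth component, and the third component vanishes because $e_0^3(n)=0$; this gives the stated block structure with the last two entries equal to zero. The pressure term $\nh\overline{p}_1$ arises from reassembling the part of the convolution that is a pure gradient, and one identifies $\overline{p}_1=\pare{-\Dh}^{-1}\diveh\diveh\pare{\uh\otimes\uh}$ by taking the horizontal divergence of the identity and using $\diveh\uh=0$.

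The main obstacle I anticipate is the bookkeeping in passing from the abstract sum over $\pare{a,b,c}$ with the resonance constraint to the clean two-dimensional transport expression: one must verify carefully that every ``mixed'' term ($a$ or $b$ equal to $\pm$ but $c=0$) really drops out, and that the only genuine interaction is horizontal-horizontal giving horizontal. This is where the explicit forms \eqref{eq:eigenvectors} and the vanishing of $e_0^3$, $e_0^4$, $e_\pm^3$-type cancellations (analogous to \eqref{eq:bof1}) must be used repeatedly. Since this lemma is quoted from \cite[Lemma 4.2]{Scrobo_Froude_periodic}, I would present the argument as a reduction: reduce \eqref{eq:limit_cQ1} with output $c=0$ to the $\pare{a,b}=\pare{0,0}$ term exactly as in the cited reference (the resonance set analysis is unchanged since it only used $\omega^0\equiv0$ and the eigenvalue formula \eqref{eq:eigenvalues}), then perform the explicit computation of the $\pare{0,0}$ interaction and invoke the standard identity $\bP^{(3)}\pare{w\cdot\nabla w}=w\cdot\nabla w+\nabla p$ restricted to horizontal two-dimensional divergence-free fields to obtain the pressure $\overline{p}_1$.
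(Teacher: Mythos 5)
Your reduction to the $(a,b)=(0,0)$ term contains a genuine gap. For the output mode $c=0$ the resonance constraint is $\omega^a(k)+\omega^b(m)=0$, and your treatment of the cases $(a,b)\in\{(\pm,0),(0,\pm),(+,+),(-,-)\}$ is correct: each of those forces $k_h=0$ or $m_h=0$, contradicting the restriction $k_h,m_h\neq0$ in \eqref{eq:limit_cQ1}. But for $(a,b)=(+,-)$ or $(-,+)$, the constraint reads $\omega(k)=\omega(m)$, i.e.\ $|\ck_h|/|\ck| = |\cm_h|/|\cm|$, and this does \emph{not} force $k_h=-m_h$. In Case~4 of Lemma~\ref{lem:zero_limit_bilinear_hor_average} the conclusion $k_h=-m_h$ came from the convolution constraint $k+m=(0,n_3)$, which is absent here: now $k_h+m_h=n_h\neq0$, and the resonance set for $c=0$, $(a,b)=(+,-)$ is genuinely nonempty (take, e.g., $k=m$, $n=2k$). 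So your stated reason for discarding the $(+,-)$ and $(-,+)$ interactions is false, and the claimed reduction does not follow from the emptiness of resonance sets.

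The interactions $(+,-)$ and $(-,+)$ are in fact eliminated by an algebraic cancellation that holds \emph{on} the resonance set, not by its being empty. Using that $\widehat{\bP}(n)^{\!\top}e_0(n)=e_0(n)$ (orthogonality of $e_0(n)$ to the frequency vector), the projection of the symmetric bilinear term onto $e_0(n)$ can be computed explicitly from \eqref{eq:eigenvectors}; after simplification the scalar prefactor is proportional to
\begin{equation*}
\ck_3^2\,|\cm_h|^2 \;-\; \cm_3^2\,|\ck_h|^2,
\end{equation*}
and the resonance condition $\omega(k)=\omega(m)$ is equivalent, after squaring and clearing denominators, precisely to $|\ck_h|^2\cm_3^2=|\cm_h|^2\ck_3^2$. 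Hence the factor vanishes identically on the resonance set and the $(\pm,\mp)$ contribution to $\overline{\widetilde{\cQ}_1(\tU,\tU)}$ drops out. This is the content of \cite[Lemma~4.2]{Scrobo_Froude_periodic} that you would need to reproduce; it cannot be replaced by the case analysis borrowed from the proof of Lemma~\ref{lem:zero_limit_bilinear_hor_average}, which relied essentially on $n_h=0$. Your subsequent computation of the surviving $(0,0)$ interaction, leading to $\uh\cdot\nh\uh+\nh\overline{p}_1$, is fine once the reduction is established correctly.
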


For the projection of $  \widetilde{\cQ}_2 \left( U, U \right) $, we remark that the matrix $\widehat{\bP}(n)$ real and symmetric, so we can write
\begin{equation*}
	\cF \widetilde{\cQ}_2 \left( U, U \right) (n) = \cF\widetilde{\cQ}_2 \left( \uU, \tU \right) (n) = 2 \sum_{\substack{\pare{0,k_3} + m=n\\ m_h, n_h \neq 0 \\ \widetilde{\omega}^{b,c}_{m,n}=0\\b, c =0, \pm\\j=1,2,3}} \psca{\widehat{\bP}(n) \begin{pmatrix} \cn \\ 0 \end{pmatrix} \cdot \bS \left( {U}^{j} (0,k_3)  \otimes  {U}^b (m)\right) \,\Big|\, e_c(n)}_{\mathbb{C}^4}\; e_c(n) , 
\end{equation*}
whence $ \widetilde{\cQ}_2 $ in \eqref{eq:lim_syst_Utilde} acts as a non-local transport between the vectors $ \uU $ and $ \tU $. We have
\begin{lemma} \label{lem:proj_tQ2_e0}
	Let $ U $ be as in Proposition \ref{pr:compact_strong}, and let $ \widetilde{\cQ}_2 $ be defined as in \eqref{eq:limit_cQ2}. Then,
	\begin{equation*}
		\sum_{\substack{n\in\bZ^3 \\ n_h\neq 0}} \ps{\cF\widetilde{\cQ}_2 \pare{\uU, \tU}(n)}{e_0(n)}_{\bC^4}\  e_0(n) = \begin{pmatrix} \underline{u}^h \cdot\nh \uh \\ 0\\ 0 \end{pmatrix} + \begin{pmatrix} \nh \overline{p}_2 \\ 0\\ 0 \end{pmatrix},
	\end{equation*}
	where
	\begin{equation*}
		\overline{p}_2 = \pare{-\Dh}^{-1} \dive_h \pare{\underline{u}^h\cdot \nh \uh}.
	\end{equation*}
\end{lemma}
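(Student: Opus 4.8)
The plan is to project the definition \eqref{eq:limit_cQ2} of $\widetilde{\cQ}_2$ onto the nonoscillating modes $E_0(n,\cdot)$ with $n_h\neq0$, and to isolate the surviving resonances. Pairing against $e_0(n)$ forces $c=0$ in \eqref{eq:limit_cQ2}, so $\omega^c(n)=\omega^0(n)=0$ and the resonance condition $\widetilde{\omega}^{b,c}_{m,n}=\omega^b(m)-\omega^c(n)=0$ becomes $\omega^b(m)=0$. Since the convolution $(0,k_3)+m=n$ imposes $\cm_h=\cn_h\neq0$, we have $\omega^\pm(m)=\pm|\cm_h|/|\cm|\neq0$, whence necessarily $b=0$. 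Therefore the $E_0$-part of $\widetilde{\cQ}_2\pare{U,U}=\widetilde{\cQ}_2\pare{\uU,\tU}$ only retains interactions between the $\underline{L^2_\sigma}$-modes $U^j(0,k_3)$, $j=1,2,3$, and the $E_0$-modes $U^0(m)$ of $\tU$, i.e.\ with the $\bU$-component of $\tU$; all $e_\pm$ interactions, and hence the whole oscillating part $U_{\osc}$, drop out.

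Next I would carry out the algebraic reduction of the remaining $b=c=0$ sum. As $e_0(n)=|\cn_h|^{-1}\pare{-\cn_2,\cn_1,0,0}^\intercal$ is orthogonal to $\pare{\cn_1,\cn_2,\cn_3,0}^\intercal$ and has zero fourth component, it belongs to the range of $\widehat{\bP}(n)$; by symmetry of $\widehat{\bP}(n)$ this operator may be discarded when pairing with $e_0(n)$, and only the two horizontal components of $\begin{pmatrix}\cn\\0\end{pmatrix}\cdot\bS\pare{U^j(0,k_3)\otimes U^0(m)}$ are relevant. Expanding $\bS$ produces a term proportional to $U^0(m)$ with coefficient $\cn\cdot u^{(j)}(0,k_3)$ and a term proportional to $U^j(0,k_3)$ with coefficient $\cn\cdot u^0(m)$, where $u^{(j)}$ and $u^0$ denote velocity parts. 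Using $\cm_h=\cn_h$ together with the explicit form of $e_0(m)$ gives $\cn\cdot u^0(m)=\widehat{U}^0(m)|\cn_h|^{-1}\pare{-\cn_1\cn_2+\cn_2\cn_1}=0$, so the second term is void; and the velocity part of $U^3(0,k_3)$ vanishes, $f_3$ being the density eigenvector, so only $j=1,2$ remain with $\sum_j\cn\cdot u^{(j)}(0,k_3)=\cn_h\cdot\widehat{\uu}(0,k_3)$. Finally $e_0(m)=e_0(n)$ whenever $\cm_h=\cn_h$, hence $\ps{U^0(m)}{e_0(n)}_{\bC^4}=\widehat{U}^0(m)$, and after tracking the numerical constants issued from \eqref{eq:def_Qeps} and the symmetrization one finds that $\cF\big(\overline{\widetilde{\cQ}_2\pare{\uU,\tU}}\big)(n)$ reduces to $\sum_{(0,k_3)+m=n}\pare{\cn_h\cdot\widehat{\uu}(0,k_3)}\widehat{U}^0(m)\,e_0(n)$.

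The last step is to recognise this expression. Since $\uh$ is precisely the $E_0$-component of $\tU$, the sum $\sum_{(0,k_3)+m=n}\pare{\cn_h\cdot\widehat{\uu}(0,k_3)}\widehat{U}^0(m)$ is the Fourier transform at frequency $n$ of the horizontal transport $\uu\cdot\nh\uh$, written in divergence form $\diveh\pare{\uu\otimes\uh}$ (legitimate because $\uu=\uu(x_3)$ is trivially horizontally divergence-free), and $\uu\cdot\nh\uh$ is a horizontal field carrying only modes with $n_h\neq0$; projecting it onto $\text{Span}\set{E_0(n,\cdot)}$ amounts to applying the two-dimensional Leray projector $\bP_h w=w+\nh\pare{-\Dh}^{-1}\diveh w$. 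Taking $w=\uu\cdot\nh\uh$ and embedding the outcome as the first two slots of a four-vector yields exactly the stated decomposition with $\overline{p}_2=\pare{-\Dh}^{-1}\diveh\pare{\uu\cdot\nh\uh}$. I expect the only genuine obstacle to be the bookkeeping: matching the eigenvector and Leray-projector conventions, controlling the constants, and verifying the two vanishings ($\cn\cdot u^0(m)=0$ and the absence of a $j=3$ contribution), so that no density-dependent term slips into the horizontal average; the underlying mechanism is the non-stationary-phase selection already employed for $\widetilde{\cQ}_1$ in Lemma~\ref{lem:proj_tQ1_e0}.
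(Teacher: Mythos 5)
Your proof is correct and takes essentially the same route as the paper: pairing with $e_0(n)$ forces $c=0$, the resonance condition $\omega^b(m)=\omega^0(n)=0$ together with $m_h=n_h\neq 0$ then forces $b=0$, and the lemma is concluded by explicit computation. The paper simply asserts that these explicit computations are "standard" and omits them; you spell them out (the vanishing of $\cn\cdot u^0(m)$, the dropping of $j=3$, and the identification of the remaining sum with $\cF(\uu\cdot\nh\uh)(n)$), which is helpful detail rather than a different method. One small remark: your computation already produces a vector proportional to $e_0(n)$, so there is in fact nothing left to project; the Leray-projector description in your last step agrees with this only because $\uu\cdot\nh\uh$ is horizontally divergence-free (so $\overline{p}_2$ is actually zero), and phrasing the conclusion as "apply $\bP_h$" slightly obscures that the projection is already done.
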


\begin{proof}

For $ c=\pm $, from \eqref{eq:eigenvectors}, we have $ e^c\perp e_0$, which implies that
\begin{equation*}
	\ps{\cF\widetilde{\cQ}_2 \left( \uU, \tU \right)(n)}{e_0(n)}_{\bC^4} = 2 \sum_{\substack{\pare{0,k_3} + m=n\\m_h, n_h \neq 0 \\ \widetilde{\omega}^{b,0}_{m,n}=0\\b =0, \pm\\j=1,2,3}} \psca{\widehat{\bP}(n) \begin{pmatrix} \cn \\ 0 \end{pmatrix} \cdot \bS \left( {U}^{j} (0,k_3)  \otimes  {U}^b (m)\right) \,\Big|\, e_0 (n) }_{\mathbb{C}^4}. 
\end{equation*}
The condition $ \widetilde{\omega}^{b,0}_{m,n}=0 $ implies that
\begin{equation*}
	\widetilde{\omega}^{b,0}_{m,n} = \omega^b\pare{m}-\omega^0(n) = \pm i \frac{\av{m_h}}{\av{m}} =0,
\end{equation*}
hence $ m_h =0 $. This consideration combined with the convolution constraint $ \pare{0,k_3} + m=n $ implies $ n_h=0 $, which contradicts the definition of the form $ \widetilde{\cQ}_2 $. Then, in the expression of $\cF\widetilde{\cQ}_2 \left( \uU, \tU \right)$, we should only take $ c=0 $ and Lemma \ref{lem:proj_tQ2_e0} follows standard explicit computations.

\end{proof}

Now, setting
\begin{equation*}
	\overline{p}= \overline{p}_1 + \overline{p}_2, 
\end{equation*}
Lemma \ref{lem:proj_tQ1_e0} and \ref{lem:proj_tQ2_e0} imply the following
\begin{cor} \label{cor:proj_cQ1cQ2_e0}
	Let $ U $ be as in Proposition \ref{pr:compact_strong}, and let $ \widetilde{\cQ}_1 $ and $ \widetilde{\cQ}_2 $ be defined as in \eqref{eq:limit_cQ1} and \eqref{eq:limit_cQ2} respectively. Then
	\begin{equation*}
		\sum_{\substack{n\in\bZ^3 \\ n_h\neq 0}} \ps{ \cF \widetilde{\cQ}_1 \pare{\tU, \tU} + \cF \widetilde{\cQ}_2 \pare{U, U}}{e_0(n)}_{\bC^4} \ e_0(n) = \begin{pmatrix} \uh\cdot\nh \uh + \underline{u}^h\cdot\nh \uh \\ 0\\ 0 \end{pmatrix} + \begin{pmatrix} \nh \overline{p} \\ 0\\ 0 \end{pmatrix}.
	\end{equation*}
\end{cor}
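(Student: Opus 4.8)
\textbf{Proof proposal for Corollary \ref{cor:proj_cQ1cQ2_e0}.}
The plan is to simply add the conclusions of Lemma \ref{lem:proj_tQ1_e0} and Lemma \ref{lem:proj_tQ2_e0}, using the linearity of the orthogonal projection onto $\text{Span}\set{E_0(n,\cdot)}_{n_h\neq 0}$. Concretely, I would first record that, by Lemma \ref{lem:proj_tQ1_e0},
\begin{equation*}
	\sum_{\substack{n\in\bZ^3 \\ n_h\neq 0}} \ps{\cF\widetilde{\cQ}_1\pare{\tU,\tU}(n)}{e_0(n)}_{\bC^4}\, e^{i\cn\cdot x}\, e_0(n) = \begin{pmatrix} \uh\cdot\nh\uh \\ 0 \\ 0 \end{pmatrix} + \begin{pmatrix} \nh\overline{p}_1 \\ 0 \\ 0 \end{pmatrix},
\end{equation*}
with $\overline{p}_1 = \pare{-\Dh}^{-1}\diveh\diveh\pare{\uh\otimes\uh}$ and $\uh=\nhp\pare{-\Dh}^{-1}\pare{-\partial_2 U^1+\partial_1 U^2}$.

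Next I would treat the $\widetilde{\cQ}_2$ contribution. The one point that needs to be flagged (rather than a genuine obstacle) is the reduction $\widetilde{\cQ}_2\pare{U,U}=\widetilde{\cQ}_2\pare{\uU,\tU}$, which follows from the symmetry of the Leray symbol $\widehat{\bP}(n)$ together with the observation already made in Section \ref{sec:filt-sys} that no bilinear interaction of the form $V^{j_1}(0,k_3)\otimes V^{j_2}(0,m_3)$ survives in $\widetilde{L^2_\sigma}$; thus in $\widetilde{\cQ}_2\pare{U,U}$ at least one factor must be an oscillating/averaged cross-term, and symmetrisation yields the factor $2$ and the pairing $\pare{\uU,\tU}$. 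With this in hand, Lemma \ref{lem:proj_tQ2_e0} gives
\begin{equation*}
	\sum_{\substack{n\in\bZ^3 \\ n_h\neq 0}} \ps{\cF\widetilde{\cQ}_2\pare{\uU,\tU}(n)}{e_0(n)}_{\bC^4}\, e^{i\cn\cdot x}\, e_0(n) = \begin{pmatrix} \underline{u}^h\cdot\nh\uh \\ 0 \\ 0 \end{pmatrix} + \begin{pmatrix} \nh\overline{p}_2 \\ 0 \\ 0 \end{pmatrix},
\end{equation*}
with $\overline{p}_2=\pare{-\Dh}^{-1}\diveh\pare{\underline{u}^h\cdot\nh\uh}$.

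Finally I would add the two displays and set $\overline{p}=\overline{p}_1+\overline{p}_2$; since the projection $V\mapsto\sum_{n_h\neq0}\ps{\cF V(n)}{e_0(n)}_{\bC^4}e^{i\cn\cdot x}e_0(n)$ is linear and $\widetilde{\cQ}_1\pare{\tU,\tU}+\widetilde{\cQ}_2\pare{U,U}$ is precisely the bilinear term appearing in \eqref{eq:lim_syst_Utilde}, this yields exactly the claimed identity. There is no real difficulty here beyond bookkeeping: the substantive work has already been done in Lemmas \ref{lem:proj_tQ1_e0} and \ref{lem:proj_tQ2_e0}, and the only care required is the symmetrisation argument reducing $\widetilde{\cQ}_2\pare{U,U}$ to $\widetilde{\cQ}_2\pare{\uU,\tU}$ so that the two pressure terms can be legitimately combined into a single horizontal gradient $\nh\overline{p}$. $\hfill\square$
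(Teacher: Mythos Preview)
Your proposal is correct and matches the paper's approach exactly: the corollary is stated immediately after setting $\overline{p}=\overline{p}_1+\overline{p}_2$ and is deduced directly from Lemmas \ref{lem:proj_tQ1_e0} and \ref{lem:proj_tQ2_e0} by linearity of the projection. The reduction $\widetilde{\cQ}_2(U,U)=\widetilde{\cQ}_2(\uU,\tU)$ you flag is precisely the identity the paper records just before Lemma \ref{lem:proj_tQ2_e0}, so nothing further is needed.
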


\bigskip

\noindent \underline{\textit{Step 2}}: The computation of the projection of $\widetilde{\cA^0_2}\pare{D}\tU$ onto $\text{Span}\set{E_0(n,\cdot)}$ is given in the following lemma, the proof of which can be found in \cite[Lemma 4.4]{Scrobo_Froude_periodic}.
\begin{lemma} \label{lem:proj_A0y_e0}
	Let $ U $ be as in Proposition \eqref{pr:compact_strong} and $ \widetilde{\cA^0_2}\pare{D} $ be defined as in \eqref{eq:limit_At}. Then
	\begin{equation*}
		\overline{\widetilde{\cA^0_2}\pare{D}\tU} = \sum_{\substack{n\in\bZ^3 \\ n_h\neq 0}} \ps{\cF \pare{\widetilde{\cA^0_2}\pare{D}\tU}}{e_0(n)}_{\bC^4} \ e_0(n) = \begin{pmatrix} \nu\Delta\uh\\ 0\\ 0 \end{pmatrix}.
	\end{equation*}
\end{lemma}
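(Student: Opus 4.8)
The statement is a direct computation in Fourier variables, and the plan is simply to unwind the definition \eqref{eq:limit_At} of $\widetilde{\cA^0_2}(D)$, exploiting both the resonance restriction built into it and the explicit form \eqref{eq:eigenvectors} of the eigenvectors. First I would note that, since $\tU = \overline{U}+U_{\osc}$ already has zero horizontal average, $\widetilde{\tU}=\tU$, and that taking the scalar product of $\widetilde{\cA^0_2}(D)\tU$ against $e_0(n)$ amounts to keeping, in the sum \eqref{eq:limit_At}, only the terms with $b=0$. For those terms the resonance condition becomes $\omega^{a,0}_n = \omega^a(n)+\omega^0(n) = \omega^a(n) = 0$; since $\omega^0\equiv 0$ while $\omega^\pm(n) = \pm\left| \cn_h \right|/\left| \cn \right|\neq 0$ for $n_h\neq 0$, only the index $a=0$ survives. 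Thus $\overline{\widetilde{\cA^0_2}(D)\tU} = \sum_{n_h\neq 0}\ps{\cF\cA_2(n)\,\tU^0(n)}{e_0(n)}_{\bC^4}\,e^{i\cn\cdot x}\,e_0(n)$, where $\tU^0(n) = \widehat{\tU}^0(n)\,e_0(n)$ with the scalar $\widehat{\tU}^0(n) = \ps{\widehat{\tU}(n)}{e_0(n)}_{\bC^4}$.

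Next I would use the explicit shape of the eigenvector, $e_0(n) = \left| \cn_h \right|^{-1}\,{}^t(-\cn_2,\cn_1,0,0)$, which has vanishing third and fourth components, together with the fact that $\cA_2(D) = \mathrm{diag}(\nu\Delta,\nu\Delta,\nu\Delta,0)$ has Fourier symbol $\cF\cA_2(n) = \mathrm{diag}(-\nu\left| \cn \right|^2,-\nu\left| \cn \right|^2,-\nu\left| \cn \right|^2,0)$. Consequently $\cF\cA_2(n)\,e_0(n) = -\nu\left| \cn \right|^2 e_0(n)$, so that, $e_0(n)$ being a unit vector, $\ps{\cF\cA_2(n)\,\tU^0(n)}{e_0(n)}_{\bC^4} = -\nu\left| \cn \right|^2\,\widehat{\tU}^0(n)$.

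Finally I would recognise the resulting series as $\nu\Delta\overline{U}$. Since $U_{\osc}$ is, mode by mode, supported on $\mathrm{Span}\set{e_\pm(n)}$ and $\overline{U}$ has no horizontal average, one has $\widehat{\tU}^0(n) = \ps{\widehat{U}(n)}{e_0(n)}_{\bC^4}$ for every $n$ with $n_h\neq 0$; hence $\overline{\widetilde{\cA^0_2}(D)\tU} = \sum_{n_h\neq 0}(-\nu\left| \cn \right|^2)\,\ps{\widehat{U}(n)}{e_0(n)}_{\bC^4}\,e^{i\cn\cdot x}\,e_0(n) = \nu\Delta\overline{U}$. By Definition \ref{def:DecompV} and the form of $e_0$ one has $\overline{U} = {}^t(\uh,0,0)$, and since $\Delta$ acts componentwise this equals ${}^t(\nu\Delta\uh,0,0)$, which is the claimed identity. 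There is essentially no obstacle here: the computation is routine, and the only point deserving a moment's attention is checking that the resonance condition $\omega^{a,0}_n=0$ combined with the projection onto $e_0(n)$ discards every interaction except the purely slow one $a=0$, after which the heat operator simply reduces to $\nu\Delta$ acting on the barotropic component $\overline{U}$.
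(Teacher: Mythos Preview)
Your argument is correct and is precisely the natural route: restrict to $b=0$ by the projection, observe that the resonance condition $\omega^{a,0}_n=\omega^a(n)=0$ forces $a=0$ because $n_h\neq 0$, and then use that $e_0(n)$ has vanishing third and fourth components so that $\cF\cA_2(n)\,e_0(n)=-\nu|\cn|^2 e_0(n)$. The paper does not supply its own proof here but refers to \cite[Lemma 4.4]{Scrobo_Froude_periodic}; your computation is the expected one and nothing is missing.
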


\bigskip

\noindent \underline{\textit{Step 3}}: Projecting the system \eqref{eq:lim_syst_Utilde} onto $\text{Span}\set{E_0(n,\cdot)}$ yields the following equation which describes the evolution of $\bU$ 
\begin{equation*}
	\partial_t \bU + \overline{\widetilde{\cQ}_1 \pare{\tU, \tU}} + \overline{\widetilde{\cQ}_2 \pare{U, U}} - \overline{\widetilde{\cA^0_2}\pare{D}\tU} = 0.
\end{equation*}
Then, Corollary \ref{cor:proj_cQ1cQ2_e0} and Lemma \ref{lem:proj_A0y_e0} imply
\begin{prop} \label{prop:limit_bar_local}
	Let $ U $ be as in Proposition \ref{pr:compact_strong} and let 
	\begin{equation*}
		\overline{V}_0 = \sum_{\substack{n\in\bZ^3 \\ n_h\neq 0}} \ps{\cF V_0}{e_0(n)}_{\bC^4} e_0(n) \in \Hs,
	\end{equation*}
	for $ s>5/2 $. Then, the projection $\overline{U}$ of $U$ onto $\text{Span}\set{E_0(n,\cdot)}$ belongs to the energy space 
	\begin{equation*}
		\cC \pare{\left[0, T\right]; H^\sigma \pare{\T^3}}, \ \sigma \in \pare{s-2, s},
	\end{equation*}
	for each $ T\in\left[ 0, T^\star \right[ $, and $ \bU $ solves the Cauchy problem \eqref{eq:Uover_2DstratNS} almost everywhere in $ \T^3\times \left[0, T\right] $.
\end{prop}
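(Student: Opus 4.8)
The plan is to read off \eqref{eq:Uover_2DstratNS} directly from the projection of the $\tU$--equation onto $\mathrm{Span}\set{E_0(n,\cdot)}_{n_h\neq 0}$, assembling the pieces already computed above. By Proposition \ref{pr:compact_strong} the limit point $U$ solves \eqref{eq:limit_system}, hence its horizontal oscillation $\tU$ solves \eqref{eq:lim_syst_Utilde}; applying the (time--independent) projector onto $\mathrm{Span}\set{E_0(n,\cdot)}_{n_h\neq 0}$ — the operation already isolated just above the statement — yields
\begin{equation*}
	\partial_t \bU + \overline{\widetilde{\cQ}_1 \pare{\tU, \tU}} + \overline{\widetilde{\cQ}_2 \pare{U, U}} - \overline{\widetilde{\cA^0_2}\pare{D}\tU} = 0 .
\end{equation*}
Corollary \ref{cor:proj_cQ1cQ2_e0} identifies the sum of the two quadratic contributions with ${}^t\pare{\uh\cdot\nh\uh + \uu\cdot\nh\uh + \nh\overline{p},\,0,\,0}$, where $\overline{p}=\overline{p}_1+\overline{p}_2$, and Lemma \ref{lem:proj_A0y_e0} identifies the linear term with ${}^t\pare{\nu\Delta\uh,\,0,\,0}$. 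Reading the resulting identity componentwise, the third and fourth components vanish identically while the first two give precisely the momentum equation of \eqref{eq:Uover_2DstratNS}. The constraint $\diveh\uh=0$ is automatic from the representation $\uh=\nhp\pare{-\Dh}^{-1}\pare{-\partial_2 U^1+\partial_1 U^2}$ furnished by Lemma \ref{lem:proj_tQ1_e0}, since $\diveh\nhp=0$; and evaluating at $t=0$, where $U|_{t=0}=V_0$, gives the initial condition $\uh|_{t=0}=\uh_0$.

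For the functional setting, note that the projector onto $\mathrm{Span}\set{E_0(n,\cdot)}_{n_h\neq 0}$ is a Fourier multiplier whose symbol is the (matrix) orthogonal projection of $\mathbb{C}^4$ onto $\mathbb{C}\,e_0(n)$ for $n_h\neq 0$ and $0$ otherwise, hence of order zero and bounded on $H^\sigma\pare{\T^3}$ for every $\sigma$; being moreover independent of $t$, it maps $\cC\pare{[0,T];H^\sigma}$ into itself. Since Proposition \ref{pr:compact_strong} gives $U\in\cC\pare{[0,T];H^\sigma\pare{\T^3}}$ for $\sigma\in\pare{s-2,s}$, we conclude $\bU\in\cC\pare{[0,T];H^\sigma\pare{\T^3}}$ on the same range, with $\bU|_{t=0}=\overline{V}_0\in\Hs$. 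To upgrade the displayed identity from $\cD'\pare{\T^3\times[0,T]}$ to an almost--everywhere identity one repeats the final argument in the proof of Proposition \ref{pr:compact_strong}: choosing $\overline{\sigma}\in\pare{5/2,s}$, the embedding $H^{\overline{\sigma}}\pare{\T^3}\hookrightarrow\cC^{1,1}$ turns every term of \eqref{eq:Uover_2DstratNS} into a genuine function, so the equation holds pointwise a.e.\ in $\T^3\times[0,T]$.

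Essentially all the substance is already behind us: the explicit evaluation of the $E_0$--projections of $\widetilde{\cQ}_1$, $\widetilde{\cQ}_2$ and $\widetilde{\cA^0_2}\pare{D}\tU$, which is where the resonance structure of the frequency interactions is exploited, has been carried out in Lemmas \ref{lem:proj_tQ1_e0}, \ref{lem:proj_tQ2_e0} and \ref{lem:proj_A0y_e0} (the first and the last following \cite{Scrobo_Froude_periodic}). The only point that deserves a little care is that, although $\overline{\widetilde{\cQ}_2}$ couples $\bU$ to the horizontal average $\uU$, this causes no trouble for the statement: by Proposition \ref{prop:limit_eq_Uunderline_local} the field $\uu$ is determined autonomously by the heat system \eqref{eq:Uunder_heat2D}, so it enters \eqref{eq:Uover_2DstratNS} merely as a prescribed, smooth--in--$x_3$ drift. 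Hence the remaining difficulty is bookkeeping only — matching the two pressure contributions $\nh\overline{p}_1$ and $\nh\overline{p}_2$ and checking that no oscillatory ($U_{\osc}$) term survives the projection — rather than any genuinely new estimate.
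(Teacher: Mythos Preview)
Your proposal is correct and follows essentially the same approach as the paper: project \eqref{eq:lim_syst_Utilde} onto $\mathrm{Span}\set{E_0(n,\cdot)}$ and invoke Corollary \ref{cor:proj_cQ1cQ2_e0} together with Lemma \ref{lem:proj_A0y_e0} to identify the resulting terms with those of \eqref{eq:Uover_2DstratNS}. Your write-up is in fact more detailed than the paper's own argument, spelling out the divergence-free constraint, the initial condition, the boundedness of the zero-order projector on $H^\sigma$, and the upgrade from distributional to a.e.\ identity via the $H^{\overline{\sigma}}\hookrightarrow\cC^{1,1}$ embedding---all of which the paper leaves implicit.
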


\bigskip

\subsubsection{Derivation of the evolution of $ U_{\osc} $}

As for $\bU$, the study of $ U_{\osc} $ also consists in three steps.

\noindent \underline{\textit{Step 1}}: Computation of 
\begin{align*}
 	\pare{\widetilde{\cQ}_1 \pare{\tU, \tU}}_{\osc} &= \sum_{\substack{n\in\bZ^3 \\ n_h\neq 0}} \sum_{c=\pm} \ps{ \cF \widetilde{\cQ}_1 \pare{\tU, \tU}(n)}{e_c(n)}_{\bC^4} e^{i\cn\cdot x} \ e_c(n)\\
 	\pare{\widetilde{\cQ}_2 \pare{\uU, \tU}}_{\osc} &= \sum_{\substack{n\in\bZ^3 \\ n_h\neq 0}} \sum_{c=\pm} \ps{ \cF \widetilde{\cQ}_2 \pare{U, U}(n)}{e_c(n)}_{\bC^4} e^{i\cn\cdot x} \ e_c(n).
\end{align*}
Since $ \tU=\bU+ U_{\osc} $, we can decompose
\begin{equation*}
	\pare{\widetilde{\cQ}_1 \pare{\tU, \tU}}_{\osc} = \pare{\widetilde{\cQ}_1 \pare{\bU, \bU}}_{\osc}+ 2\pare{\widetilde{\cQ}_1 \pare{\bU, U_{\osc}}}_{\osc} + \pare{\widetilde{\cQ}_1 \pare{U_{\osc}, U_{\osc}}}_{\osc}.
\end{equation*}
The first term was already calculated in \cite[Lemma 4.6]{Scrobo_Froude_periodic}, and we have
\begin{lemma} \label{lem:cQ1barbarosc=0}
	The following identity holds true
	\begin{equation*}
		\pare{\widetilde{\cQ}_1 \pare{\bU, \bU}}_{\osc} =0.
	\end{equation*}
\end{lemma}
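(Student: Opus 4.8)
The plan is to show that the bilinear interaction $\widetilde{\cQ}_1\pare{\bU,\bU}$, projected onto the oscillating modes $\set{e_c(n)}_{c=\pm}$, collects only resonant triads $(k,m,n)$ with $k_h,m_h,n_h\neq 0$ satisfying $\omega^{a,b,c}_{k,m,n}=0$ in which $\bU^{a}(k)$ and $\bU^{b}(m)$ are nonzero; but since $\bU$ lives in $\mathrm{Span}\set{E_0(n,\cdot)}$, the only contributing indices are $a=b=0$. First I would write out, using the explicit formula \eqref{eq:limit_cQ1},
\begin{equation*}
	\pare{\widetilde{\cQ}_1\pare{\bU,\bU}}_{\osc}
	= \sum_{\substack{n\in\bZ^3\\ n_h\neq 0}} \sum_{c=\pm}
	\sum_{\substack{\omega^{0,0,c}_{k,m,n}=0\\ k+m=n\\ k_h,m_h,n_h\neq 0}}
	\psca{\widehat{\bP}(n)\begin{pmatrix} n\\ 0\end{pmatrix}\cdot \bS\pare{\bU^{0}(k)\otimes \bU^{0}(m)}\,\Big|\, e_c(n)}_{\bC^4}\; e^{i\cn\cdot x}\ e_c(n),
\end{equation*}
because $\bU^{a}(k)=0$ whenever $a=\pm$ (by Definition \ref{def:DecompV}, $\bU$ is the $e_0$-component of $\tU$).

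Next I would exploit the resonance condition. Since $\omega^0\equiv 0$ on the frequency cone $\set{n_h\neq 0}$, the phase $\omega^{0,0,c}_{k,m,n}=\omega^0(k)+\omega^0(m)-\omega^c(n)=-\omega^c(n)=\mp\omega(n)$, with $\omega(n)=\av{\cn_h}/\av{\cn}$. For $c=\pm$ this vanishes only if $\cn_h=0$, which is excluded by the constraint $n_h\neq 0$ present in the sum defining $\widetilde{\cQ}_1$. Hence the index set over which the sum runs is empty, and every summand disappears.

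The only subtlety — and the step to be careful about — is to make sure nothing survives from a degenerate configuration: one must check that $\bU^{0}(k)$ is genuinely the coefficient against $e_0(k)$ (so $k_h\neq 0$ is forced for it to be nonzero, consistent with the constraint), and that no contribution hides in the $F_j$-modes; but $\pare{\cdot}_{\osc}$ by definition only retains the $e_\pm$ projections with $n_h\neq 0$, so the $\underline{L^2_\sigma}$ part is irrelevant here. I do not expect a real obstacle: this is the exact analogue of \cite[Lemma 4.6]{Scrobo_Froude_periodic}, the point being simply that two elements of $\ker\pare{\PA}$ of oscillation-free type ($e_0$-type) can never resonate into an oscillating mode, since their combined phase is $\mp\omega(n)\neq 0$. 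This yields $\pare{\widetilde{\cQ}_1\pare{\bU,\bU}}_{\osc}=0$.
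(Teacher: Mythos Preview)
Your proof is correct and follows exactly the approach the paper relies on: since $\bU$ has only $e_0$-components, the resonance constraint reduces to $\omega^c(n)=0$ for $c=\pm$, which forces $n_h=0$ and contradicts the summation constraint. The paper does not reprove this lemma but refers to \cite[Lemma 4.6]{Scrobo_Froude_periodic}; your argument is precisely the expected one, and the same mechanism (nonvanishing of $\omega^\pm(n)$ when $n_h\neq 0$) is used verbatim in the paper's proofs of Lemma~\ref{lem:proj_tQ2_e0} and Lemma~\ref{lem:proj_tA_osc}.
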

\noindent To obtain the bilinear term of the equation \eqref{eq:lim_Uosc}, it remains to find the explicit expression of $\pare{\widetilde{\cQ}_2 \pare{\uU, \tU}}_{\osc}$, which is in fact the bilinear term $\cB \left( \uU,  U_{\osc} \right)$ of the equation \eqref{eq:lim_Uosc}.
\begin{lemma}
	We have the following explicit expression
 	\begin{align}
 		\label{eq:tQ2_e+-} \mathcal{F} \ \cB \left( \uU,  U_{\osc} \right) = & \ \mathcal{F} \  \pare{\widetilde{\cQ}_2 \left( \uU,  \tU \right)}_{\osc} \\
 		= & \ \sum_{\substack{ b, c=\pm \\j=1,2,3}} \psca{\widehat{\bP}(n) \begin{pmatrix} \cn \\ 0 \end{pmatrix} \cdot \bS \left( {U}^{j} \left(0, 2n_3 \right)  \otimes  {U}^b \left(n_h,  -n_3\right)\right) \,\Big|\, e_c(n) }_{\mathbb{C}^4}\; e_c(n). \notag 
 	\end{align}
\end{lemma}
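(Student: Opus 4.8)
The plan is to read off $\pare{\widetilde{\cQ}_2\pare{\uU,\tU}}_{\osc}$ directly from the spectral formula \eqref{eq:limit_cQ2} and to reduce it by solving the convolution and resonance relations, in the same spirit as in the proofs of Lemma \ref{lem:zero_limit_bilinear_hor_average} and Lemma \ref{lem:proj_tQ2_e0}, but now projecting onto the oscillating eigenvectors $e_\pm$ rather than onto $e_0$. First I would note that $\cB\pare{\uU,U_{\osc}}$ is, by construction, the part of $\widetilde{\cQ}_2\pare{\uU,\tU}$ lying in $\text{Span}\set{E_c(n,\cdot)\,:\,n_h\neq 0,\ c=\pm}$; since $\uU$ carries only Fourier modes supported on $\set{n_h=0}$, the factor $V_1^b(m)$ with $m_h\neq 0$ occurring in the second sum of \eqref{eq:limit_cQ2} vanishes when $V_1=\uU$, so only the first sum survives and
\begin{equation*}
	\cF\pare{\widetilde{\cQ}_2\pare{\uU,\tU}}_{\osc}(n) = \sum_{\substack{\pare{0,k_3}+m=n\\ m_h, n_h\neq 0\\ \widetilde{\omega}^{b,c}_{m,n}=0\\ b\in\set{0,\pm},\ c=\pm\\ j=1,2,3}} \psca{\widehat{\bP}(n)\begin{pmatrix}\cn\\0\end{pmatrix}\cdot\bS\pare{U^j(0,k_3)\otimes U^b(m)}\ \Big|\ e_c(n)}_{\bC^4}\ e_c(n).
\end{equation*}

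Next I would resolve the two constraints. The convolution relation $\pare{0,k_3}+m=n$ gives $m_h=n_h$ (in particular $\av{\cm_h}=\av{\cn_h}\neq 0$) and $k_3=n_3-m_3$. Recalling from \eqref{eq:eigenvalues} that $\omega^0\equiv 0$ and $\omega^\pm(n)=\pm\av{\cn_h}/\av{\cn}$, the resonance identity $\widetilde{\omega}^{b,c}_{m,n}=\omega^b(m)-\omega^c(n)=0$ with $c=\pm$ cannot hold for $b=0$ (it would force $n_h=0$), so $b=\pm$; it then reads $b/\av{\cm}=c/\av{\cn}$ after dividing by $\av{\cn_h}$, which forces $b=c$ and $\av{\cm}=\av{\cn}$, hence $\cm_3^2=\cn_3^2$, i.e. $m_3=\pm n_3$. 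If $m_3=n_3$ (which covers also $n_3=0$) then $k_3=0$ and the corresponding term vanishes since all the vector fields involved have zero average, so $U^j(0,0)=0$. The only surviving branch is thus $m_3=-n_3$ with $n_3\neq 0$, for which $k_3=2n_3$, i.e. $m=\pare{n_h,-n_3}$, $k=\pare{0,2n_3}$, and $b=c$; substituting these values back into the display above yields exactly the right-hand side of \eqref{eq:tQ2_e+-}, the cross contributions $b\neq c$ simply never occurring.

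I do not expect a genuine obstacle here: the argument is essentially the bookkeeping of one convolution constraint together with one resonance constraint, of the same nature as the computations already performed for Lemma \ref{lem:zero_limit_bilinear_hor_average} and Lemma \ref{lem:proj_tQ2_e0}. The only point deserving some care is the dichotomy $m_3=\pm n_3$ arising from $\cm_3^2=\cn_3^2$, combined with the remark that the branch $m_3=n_3$ (equivalently $k_3=0$) is killed by the zero-average constraint; this is what collapses the interacting frequencies to the single resonant configuration $\set{k=(0,2n_3),\ m=(n_h,-n_3)}$ of the statement and, in particular, explains the doubling of the vertical frequency in the first argument of $\bS$.
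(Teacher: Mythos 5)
Your argument follows the same route as the paper: reduce \eqref{eq:limit_cQ2} using the support of $\uU$ (the paper phrases this by first splitting $\tU=\bU+U_{\osc}$ and killing the $\bU$-contribution because $b=0$ and $c=\pm$ force $n_h=0$), then, with $m_h=n_h$, exploit the resonance constraint $\widetilde{\omega}^{b,c}_{m,n}=0$ to deduce $\cm_3^2=\cn_3^2$, and finally discard the branch $m_3=n_3$ because $k_3=0$ picks up the zero global average. Your explicit remark that the resonance also forces $b=c$ is a welcome precision which the paper leaves implicit: it tacitly writes $\omega^\pm(n_h,n_3)-\omega^\pm(n_h,m_3)$ with matching signs and keeps the unconstrained sum over $b,c\in\set{\pm}$ in the displayed formula, whereas the cross terms $b\neq c$ are in fact absent.
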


\begin{proof}

Since $ \tU=\bU+U_{\osc} $, we have
\begin{equation*}
	\pare{\widetilde{\cQ}_2 \pare{\uU, \tU}}_{\osc} =  \pare{\widetilde{\cQ}_2 \pare{\uU, \bU}}_{\osc} + \pare{\widetilde{\cQ}_2 \pare{\uU, U_{\osc}}}_{\osc}.
\end{equation*}
According to the definition \eqref{eq:limit_cQ2}, we can write
\begin{equation*}
	\pare{\widetilde{\cQ}_2 \pare{\uU, \bU}}_{\osc} = \sum_{\substack{\pare{0,k_3} + m=n\\ m_h, n_h \neq 0\\ \widetilde{\omega}^{0,c}_{m,n}=0\\ c=\pm \\j=1,2,3}} \psca{\widehat{\bP}(n) \begin{pmatrix} \cn \\ 0 \end{pmatrix} \cdot \bS \left( {U}^{j} (0,k_3) \otimes {U}^0 (m)\right) \,\Big|\, e_c(n)}_{\mathbb{C}^4}\; e_c(n).
\end{equation*}
Let us remark that the above formulation differs to the one given in \eqref{eq:limit_cQ2} since the projection onto the oscillating subspace forces the parameter $ c $ to be equal to $ \pm $ only, and the fact that $ \bU $ is the second argument of the bilinear form forces the parameter $ b $ in \eqref{eq:limit_cQ2} to be zero. Hence the bilinear interaction constraint $ \widetilde{\omega}^{0,c}_{m,n}= c \frac{\av{n_h}}{\av{n}}=0 $ combined with the convolution constraint $ \pare{0,k_3} + m=n $ implies that $ n_h=0 $, that contradicts the definition of $ \widetilde{\cQ}_2 $. We deduce that
\begin{equation*}
	\pare{\widetilde{\cQ}_2 \pare{\uU, \bU}}_{\osc} =0.
\end{equation*}

It now remains to prove \eqref{eq:tQ2_e+-}. According to the above argument we can argue that
\begin{align*}
\pare{\widetilde{\cQ}_2 \pare{\uU, \tU}}_{\osc} & =   \pare{\widetilde{\cQ}_2 \pare{\uU, U_{\osc}}}_{\osc}\\
& = \sum_{\substack{\pare{0,k_3} + m=n\\ \widetilde{\omega}^{b,c}_{m,n}=0\\ b, c=\pm \\j=1,2,3}} \left( \left.  \ \widehat{\bP}(n) \begin{pmatrix} \cn \\ 0 \end{pmatrix} \cdot \bS \left( {U}^{j} (0,k_3)  \otimes {U}^b (m)\right) \right| e_c(n) \right)_{\mathbb{C}^4}\; e_c(n).
\end{align*} 
In this case, $n_h = m_h$ and using \eqref{eq:eigenvalues}, the equality $\widetilde{\omega}^{b,c}_{m,n} = \omega^\pm \left( n_h, n_3 \right)- \omega^\pm \left( n_h, m_3 \right)=0 $ becomes
\begin{equation*}
\frac{\left| \cn_h \right|}{\sqrt{\cn_1^2 + \cn_2^2+ \cm_3^2}}
=
\frac{\left| \cn_h \right|}{\sqrt{\cn_1^2 + \cn_2^2+ \cn_3^2}}.
\end{equation*}
The above equality is satisfied if $ m_3=\pm n_3 $. Let us suppose $ m_3 =n_3 $, if this is the case the convolution condition $ k_3  + m_3 = n_3 $ implies that $ k_3=0 $, in this case the term $ \widehat{{U}} \left( 0,0 \right) $ denotes the  average of the element $ {u}^h $, which is identically zero by hypothesis since \eqref{PBSe} propagates the global average which is supposed to be zero since the beginning. Thus, we get $ m_3=-n_3 $ and $ k_3 =2n_3 $ and we recover the expression in \eqref{eq:tQ2_e+-}.

\end{proof}

\bigskip

\noindent \underline{\textit{Step 2}}: Computation of $\pare{\widetilde{\cA^0_2}\pare{D}\tU}_{\osc}$.
\begin{lemma} \label{lem:proj_tA_osc}
	We have
	\begin{equation*}
		\pare{\widetilde{\cA^0_2}\pare{D}\tU}_{\osc} =  \nu \Delta U_{\osc}.
	\end{equation*}
\end{lemma}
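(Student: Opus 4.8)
The plan is to prove this exactly as its companion for the non-oscillating part, Lemma~\ref{lem:proj_A0y_e0}: unfold the Fourier--space definition \eqref{eq:limit_At} of $\widetilde{\cA^0_2}(D)$ and read off the $e_\pm(n)$--coefficients. First I recall that for $n_h\neq 0$ the family $\set{e_0(n),e_+(n),e_-(n)}$ is an orthonormal basis of the divergence--free plane $\set{z\in\bC^4:\ z\perp{}^t(\cn_1,\cn_2,\cn_3,0)}$, so that $\widehat{\tU}(n)=\sum_{a\in\set{0,\pm}}\psca{\widehat{\tU}(n),e_a(n)}_{\bC^4}\,e_a(n)$ on this plane, and that by Definition~\ref{def:DecompV} and Proposition~\ref{prop:Projection} the oscillating projection $(\,\cdot\,)_{\osc}$ is precisely the operator retaining only the output modes $e_b(n)$ with $b=\pm$. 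Inserting this into \eqref{eq:limit_At} reduces the statement to the purely linear--algebraic identity, for each $n$ with $n_h\neq 0$,
\[
\sum_{\substack{b=\pm\\ a:\ \omega^{a,b}_n=0}}\psca{\widehat{\tU}(n),e_a(n)}_{\bC^4}\,\psca{\cF\cA_2(n)\,e_a(n),e_b(n)}_{\bC^4}\,e_b(n)\;=\;-\nu\,\av{\cn}^2\sum_{c=\pm}\psca{\widehat{\tU}(n),e_c(n)}_{\bC^4}\,e_c(n),
\]
the right--hand side being $\cF\pare{\nu\Delta U_{\osc}}(n)$ by Definition~\ref{def:DecompV}.

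The proof of this identity requires only two elementary computations. The resonance bookkeeping: since $\omega^0(n)=0$ and $\omega^\pm(n)=\pm\av{\cn_h}/\av{\cn}$, with $\omega(n)>0$ for $n_h\neq 0$, the constraint $\omega^{a,b}_n=0$ together with $b=\pm$ determines $a$ uniquely in terms of $b$ --- this is the same sorting as in the $e_0$--projection, and it leaves exactly one term for each $(n,b)$. The matrix coefficients: $\cF\cA_2(n)$ scales the first three components of a vector by $-\nu\av{\cn}^2$ and annihilates the fourth, so each $\psca{\cF\cA_2(n)\,e_a(n),e_b(n)}_{\bC^4}$ is evaluated in closed form directly from the explicit formulas \eqref{eq:eigenvectors}, using $e_-(n)=\overline{e_+(n)}$ and the one--line identity $\av{\cn_h}^2+\cn_3^2=\av{\cn}^2$ (the same identity that normalizes the $e_a(n)$). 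Substituting these into the left--hand side and re--summing over the admissible pairs yields the right--hand side; summing over $n$ gives the lemma. Note also that only the $e_\pm(n)$--components of the argument feed oscillating output modes, so that $\tU=\bU+U_{\osc}$ may freely be replaced by $U_{\osc}$; this is automatic from the resonance constraint and parallels Lemma~\ref{lem:cQ1barbarosc=0}.

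The only genuinely delicate point is this index bookkeeping --- keeping straight which eigenvector plays the input role $e_a(n)$ and which the output role $e_b(n)$, and handling the conjugation $e_-(n)=\overline{e_+(n)}$ --- so that the combination selected by the resonance condition reconstitutes the \emph{whole} Laplacian on $U_{\osc}$, including its fourth (density) component, on which $\cA_2(D)$ does not act directly. There is no analytic content whatsoever: the statement is an exact equality of Fourier series valid for any $\tU\in L^2_\sigma$ for which the series converge, hence in particular for the solution $U$ of Proposition~\ref{pr:compact_strong}.
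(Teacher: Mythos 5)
Your proposal follows essentially the same route as the paper: unfold the Fourier definition \eqref{eq:limit_At}, use the resonance constraint to reduce each output mode $b=\pm$ to a single admissible input mode $a$, and evaluate the matrix element $\psca{\cF\cA_2(n)e_a(n),e_b(n)}_{\bC^4}$ from the explicit eigenvector formulas. The only difference is one of explicitness: the paper pins down the admissible pair by ruling out $a=-b$ (showing it would force $n_h=0$) and concluding $a=b$, whereas you leave the identity of the selected $a$ implicit and simply assert that the resonance condition ``determines $a$ uniquely'' and that the coefficient evaluates to $-\nu\av{\cn}^2$; that is the same single nontrivial step the paper also asserts rather than derives, so the two arguments are at the same level of detail and rely on the same computation.
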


\begin{proof}

We will calculate
\begin{equation*}
	\cF \pare{\widetilde{\cA^0_2}\pare{D} U_{\osc}}_{\osc} (n) = \sum_{\substack{a, b =\pm\\ \omega^{a, b}_n=0}}\widehat{U}^a (n) \ps{{\cA_2}(n) e_a(n)}{e_b(n)}_{\bC^4} e_b(n),
\end{equation*}
where the matrix $ \cA_2 $ is defined in \eqref{matrici}.

If $ a=-b $, the condition $ \omega^{a, b}_n=0 $ becomes
\begin{equation*}
	\omega^{a, -a}_n = 2\omega^a(n) = 2 a\ i\frac{\av{n_h}}{\av{n}}=0, 
\end{equation*}
which implies $ n_h=0 $, contradicting the definition of $ \widetilde{\cA^0_2} $. Thus, we deduce that $ a=b $. The expression of the eigenvalues given in \eqref{eq:eigenvalues} implies
\begin{equation*}
\ps{{\cA_2}\pare{D} e_a}{e_a}_{\bC^4} = -\nu \av{n}^2. 
\end{equation*}
Lemma \ref{lem:proj_tA_osc} is then proved.

\end{proof}

\begin{rem}
	We want to emphasize that Lemma \ref{lem:proj_A0y_e0} and \ref{lem:proj_tA_osc} imply the strict (total) parabolicity of the operator $ {\widetilde{\cA^0_2}}\pare{D}$ for vector fields with zero horizontal average. This is remarkable since the operator $ \cA_2 \pare{D} $ appearing in \eqref{PBSe} is \textit{not} strictly parabolic.
\end{rem}

\bigskip

\noindent \underline{\textit{Step 3}}: Projecting the system \eqref{eq:lim_syst_Utilde} onto $\text{Span}\set{E_\pm(n,\cdot)}$ and using Lemma \ref{lem:cQ1barbarosc=0} and \ref{lem:proj_tA_osc}, we deduce that the evolution of $ U_{\osc} $ is given by 
\begin{equation*}
	\partial_t U_{\osc} + \widetilde{\cQ}_1 \pare{U_{\osc}, U_{\osc} + 2 \bU} + \cB\pare{\uU, U_{\osc}}-\nu\Delta U_{\osc}=0,
\end{equation*}
and we hence prove the following result

\begin{prop} \label{prop:limit_osc_local}
	Let $ U_{\osc, 0}\in\Hs, \ s>5/2 $, the projection of $ U $ onto the oscillating subspace $\text{Span}\set{E_\pm(n,\cdot)}$ belongs to the energy space
	\begin{equation*}
		\cC\pare{[0, T]; H^\sigma \pare{\T^3}}, \ \sigma\in\pare{s-2, s}, 
	\end{equation*}  
	for each $ T\in\left[0, T^\star\right[$, and $ U_{\osc} $ solves the Cauchy problem \eqref{eq:lim_Uosc} a.e. in $ \T^3\times [0, T] $.
\end{prop}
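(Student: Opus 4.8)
The plan is to obtain the Cauchy problem \eqref{eq:lim_Uosc} by projecting the equation \eqref{eq:lim_syst_Utilde} for $\tU$ onto the oscillating subspace, and then to transfer to $U_{\osc}$ the regularity of the limit point $U$ furnished by Proposition \ref{pr:compact_strong}. Recall that $\tU=\bU+U_{\osc}$ solves \eqref{eq:lim_syst_Utilde} and that $U_{\osc}=\Pi_{\osc}U=\Pi_{\osc}\tU$, where $\Pi_{\osc}$ is the orthogonal projection onto $\text{Span}\set{E_\pm(n,\cdot)}$ (the contributions of $\uU$ and $\bU$ being killed because $e_\pm(n)$ requires $n_h\neq 0$ and is orthogonal to $e_0(n)$). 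Since $\Pi_{\osc}$ is a matrix-valued Fourier multiplier with symbol $\sum_{\alpha=\pm}e_\alpha(n)\otimes\overline{e_\alpha(n)}$, which is uniformly bounded in $n$ (indeed it depends only on the direction of $\cn$), it commutes with $\partial_t$ and is bounded on every $H^\sigma\pare{\T^3}$; this legitimises applying $\Pi_{\osc}$ term by term to \eqref{eq:lim_syst_Utilde}.

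First I would apply $\Pi_{\osc}$ to \eqref{eq:lim_syst_Utilde}. Commuting it with the time derivative produces $\partial_t U_{\osc}$. For the quadratic term I would write $\tU=\bU+U_{\osc}$ and use the bilinearity and the symmetry of $\widetilde{\cQ}_1$ to split $\widetilde{\cQ}_1\pare{\tU,\tU}=\widetilde{\cQ}_1\pare{\bU,\bU}+\widetilde{\cQ}_1\pare{U_{\osc}+2\bU,U_{\osc}}$; Lemma \ref{lem:cQ1barbarosc=0} annihilates the oscillating part of the first summand, leaving $\pare{\widetilde{\cQ}_1\pare{U_{\osc}+2\bU,U_{\osc}}}_{\osc}$, which is the quadratic term of \eqref{eq:lim_Uosc}. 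For $\widetilde{\cQ}_2\pare{U,U}$ I would use that its structure pairs a horizontal-average mode with a horizontal-oscillation one, so that only the pair $\pare{\uU,\tU}$ contributes; projecting onto $\text{Span}\set{E_\pm(n,\cdot)}$ and discarding the contribution involving $\bU$ through the convolution and resonance constraints — exactly as in the lemma leading to \eqref{eq:tQ2_e+-} — identifies the surviving oscillating part with $\cB\pare{\uU,U_{\osc}}$. Finally Lemma \ref{lem:proj_tA_osc} gives $\pare{\widetilde{\cA^0_2}\pare{D}\tU}_{\osc}=\nu\Delta U_{\osc}$. Collecting these four identities is precisely the Cauchy problem \eqref{eq:lim_Uosc}.

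For the functional framework, Proposition \ref{pr:compact_strong} gives $U\in\cC\pare{[0,T];H^\sigma\pare{\T^3}}$ for every $\sigma\in\pare{s-2,s}$ and every $T\in[0,T^\star)$; the boundedness of $\Pi_{\osc}$ on $H^\sigma\pare{\T^3}$ then yields $U_{\osc}\in\cC\pare{[0,T];H^\sigma\pare{\T^3}}$ in the same range, and $U_{\osc,0}=\Pi_{\osc}V_0\in\Hs$. Choosing in addition $\sigma\in\pare{2,s}$ — a nonempty choice since $s>5/2$ — every term of \eqref{eq:lim_Uosc} belongs to $\cC\pare{[0,T];H^{\sigma-2}\pare{\T^3}}$ with $\sigma-2>0$, so the identity holds pointwise almost everywhere on $\T^3\times[0,T]$.

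I do not expect a genuine obstacle in this proposition: the analytically delicate work — closing the oscillating projections of $\widetilde{\cQ}_1$ and $\widetilde{\cQ}_2$ — has already been carried out in Steps 1--2 through Lemmas \ref{lem:cQ1barbarosc=0} and \ref{lem:proj_tA_osc} together with the lemma yielding \eqref{eq:tQ2_e+-}. The only points demanding attention are the bookkeeping of the bilinear decomposition $\widetilde{\cQ}_1\pare{\tU,\tU}$ (so that it matches the form appearing in \eqref{eq:lim_Uosc}) and, as in the proof of Proposition \ref{pr:compact_strong}, the low-regularity justification: since $U$ is controlled a priori only in $H^\sigma$, the projected identity must be read in a Sobolev space of positive but possibly small index before one may assert that \eqref{eq:lim_Uosc} holds almost everywhere.
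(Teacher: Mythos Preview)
Your proposal is correct and follows essentially the same route as the paper: project \eqref{eq:lim_syst_Utilde} onto $\text{Span}\set{E_\pm(n,\cdot)}$, split $\widetilde{\cQ}_1(\tU,\tU)$ via $\tU=\bU+U_{\osc}$, and invoke Lemma~\ref{lem:cQ1barbarosc=0}, the lemma yielding \eqref{eq:tQ2_e+-}, and Lemma~\ref{lem:proj_tA_osc} to arrive at \eqref{eq:lim_Uosc}. Your additional care in justifying the boundedness of $\Pi_{\osc}$ on $H^\sigma$ and the a.e.\ interpretation via a choice of $\sigma>2$ is more explicit than the paper, which treats these points as immediate consequences of Proposition~\ref{pr:compact_strong}.
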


\section{Global propagation of smooth data for the limit system}

We already proved in Proposition \ref{prop:limit_eq_Uunderline_local}, \ref{prop:limit_bar_local} and \ref{prop:limit_osc_local}, if the initial data $U_0 \in H^s$, $s > 5/2$, then the decomposition $U=\uU + \bU + U_{\osc}$ holds in $ \cC \pare{[0, T], H^\sigma}$, $s-2<\sigma<s $ and $0\leqslant T<T^\star$, and where $\uU$, $\bU$ and $U_{\osc}$ are respectively solutions of the systems \eqref{eq:Uunder_heat2D}, \eqref{eq:Uover_2DstratNS} and \eqref{eq:lim_Uosc}. The aim of this section is to prove the \textit{global} propagation of the $\Hs$-regularity, $s>5/2$, by the limit system \eqref{eq:limit_system}, more precisely by the systems \eqref{eq:Uunder_heat2D}--\eqref{eq:lim_Uosc}. This propagation can be resumed in the following propositions. We remark that for $\uU$ and $\bU$, we need much less regularity, and the $\Hs$-regularity, $s>5/2$, is especially needed for $U_{\osc}$. 
\begin{prop} \label{pr:global_Hs_Uunder}
	Let $ U_0 \in \Hs, s\geqslant 0 $, then the solution $\uU=\pare{\uu, 0, \underline{U}^4}$ of the equation \eqref{eq:Uunder_heat2D} globally and uniquely exists in time variable
	\begin{equation*}
		\uu \in \cC \pare{\R_+; H^{s}\pare{\T^1_{\v}}} \cap L^2 \pare{\R_+; H^{s+1}\pare{\T^1_{\v}}}, 
	\end{equation*}
	and
	\begin{equation*}
		\underline{U}^4 \in \cC \pare{\R_+; H^s\pare{\T^1_{\v}}}.
	\end{equation*}
\end{prop}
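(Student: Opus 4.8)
The plan is to observe that the system \eqref{eq:Uunder_heat2D} is linear and completely decoupled, so the two components of $\uU$ can be dealt with separately, and with entirely elementary tools.

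First I would dispose of the fourth component: the second and fourth lines of \eqref{eq:Uunder_heat2D} simply say $\underline{U}^4(x_3,t)=\underline{V_0^4}(x_3)$ for all $t\geqslant 0$. Since the horizontal averaging operator $A\mapsto\underline{A}$ is the Fourier projection onto the frequencies $\set{n_h=0}$, it maps $\Hs$ continuously into $H^s(\T^1_{\v})$; hence $\underline{V_0^4}\in H^s(\T^1_{\v})$, and being constant in time, $\underline{U}^4\in\cC(\R_+;H^s(\T^1_{\v}))$ trivially.

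Next I would treat $\uu$, whose equation is the one-dimensional heat equation on $\T^1_{\v}$ with datum $\underline{V_0^h}\in H^s(\T^1_{\v})$. Two equivalent routes are available: either write the solution explicitly in Fourier variables as $\uu(t,x_3)=\sum_{k_3\in\bZ}e^{-\nu\ck_3^2 t}\widehat{(\underline{V_0^h})}(k_3)e^{i\ck_3 x_3}$, from which all the claimed bounds follow by inspection; or, more in the spirit of the rest of the paper, apply $(-\partial_3^2)^{s/2}$ to the equation and pair with $(-\partial_3^2)^{s/2}\uu$ in $L^2(\T^1_{\v})$ to get the energy identity
\begin{equation*}
	\frac12\frac{d}{dt}\norm{\uu(t)}_{H^s(\T^1_{\v})}^2+\nu\norm{\uu(t)}_{H^{s+1}(\T^1_{\v})}^2=0,
\end{equation*}
and integrate in time to obtain $\uu\in\cC(\R_+;H^s(\T^1_{\v}))\cap L^2(\R_+;H^{s+1}(\T^1_{\v}))$ together with $\norm{\uu}_{L^\infty(\R_+;H^s)}^2+2\nu\norm{\uu}_{L^2(\R_+;H^{s+1})}^2\leqslant\norm{\underline{V_0^h}}_{H^s}^2$. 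Global existence and uniqueness are immediate, the problem being linear with time-independent coefficients.

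I do not anticipate any real obstacle here; the only point I would be careful about is that the vector fields carry zero global average on $\T^3$, so that $\int_{\T^1_{\v}}\underline{V_0}\,dx_3=0$, i.e. $\widehat{(\underline{V_0^h})}(0)=0$. This makes the homogeneous seminorm $\norm{(-\partial_3^2)^{s/2}\cdot}_{L^2(\T^1_{\v})}$ equivalent to the full $H^s(\T^1_{\v})$ norm on the relevant subspace, so that the energy identity above genuinely controls the $H^s$ norm; and this zero-average property is preserved by \eqref{eq:Uunder_heat2D}, just as it is by \eqref{PBSe}.
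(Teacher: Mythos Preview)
Your proposal is correct and follows essentially the same approach as the paper: the classical $H^s$ energy identity for the one-dimensional heat equation, together with the zero-average property to upgrade $\norm{\partial_3\uu}_{H^s}$ to the full $H^{s+1}$ norm. If anything you are more thorough than the paper, which does not explicitly discuss $\underline{U}^4$ or the continuity-in-time, and only records the energy identity and the zero-average remark.
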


\begin{prop}
	\label{pr:global_Hs_ubarh}
	Let $ U_0 \in\Hs\cap L^\infty \left( \T_v; H^\sigma \left( \T^2_h \right) \right)$, and $ \nh U_0 \in L^\infty \left( \T_v; H^\sigma \left( \T^2_h \right) \right) $ for $s>1/2, \sigma >0$,  then the system \eqref{eq:Uover_2DstratNS} possesses a unique solution in
	\begin{equation*}
		\overline{u}^h \in  \mathcal{C}\left( \mathbb{R}_+;{\cFHs} \right) \cap L^2\left( \mathbb{R}_+; {H}^{s+1}\left( \T^3 \right) \right) .
	\end{equation*}
	Moreover for each $ t>0 $ the following estimate holds true
	\begin{equation} \label{eq:stong_Hs_bound_ubar}
		\left\| \uh \left( t \right)\right\|_{\cFHs}^2 + \nu \int_0^t \left\| \uh\left( \tau \right) \right\|_{H^{s+1}\left( \T^3 \right)}^2d\tau \leqslant  \mathcal{E}_1 \left( U_0 \right),
	\end{equation}
	where
	\begin{equation} \label{eq:E1}
		\mathcal{E}_1 \left( U_0 \right) = C \left\| \uh _0\right\|_{\cFHs}^2 \exp\set{\frac{C  {K}\ \Phi \left( U_0 \right)}{c\nu} \; \left\| \nh \uh_0 \right\|_{L^p_v \left( H^\sigma_h \right)} + \frac{C}{\nu} \left\| \underline{u}^h_0 \right\|^2 _{H^s\left( \T^1_{\v} \right)} }
	\end{equation}
	and
	\begin{equation*} 
		\Phi \left( U_0 \right)= \exp \set{\frac{ C K^2 \left\| \nh\uh_0 \right\|_{L^\infty_v\left( L^2_h \right)}^2 }{c\nu}  \exp \set{ \frac{K}{c\nu} \left( 1+ \left\| \uh_0 \right\|_{L^\infty_v \left( L^2_h \right)}^2 \right) \left\| \nh\uh_0 \right\|_{L^\infty_v \left( L^2_h \right)}^2 } }. 
	\end{equation*}
\end{prop}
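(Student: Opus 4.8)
Here is how I would approach the proof of Proposition~\ref{pr:global_Hs_ubarh}.

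The plan is to exploit that \eqref{eq:Uover_2DstratNS} is, up to the coupling through the vertical diffusion $\nu\partial_3^2$ and through the $x_h$‑independent drift $\uu=\uu(t,x_3)$, a family (indexed by $x_3$) of two–dimensional Navier--Stokes systems, and then to reach the a~priori bound \eqref{eq:stong_Hs_bound_ubar} through a hierarchy of parabolic energy estimates mirroring the layered structure of $\mathcal{E}_1$ and $\Phi$. Local well‑posedness in the stated class is obtained by the usual fixed–point scheme (2D Navier--Stokes type theory, uniformly in $x_3$), so the heart of the matter is the global a~priori control. Since $\uh=(\bar u^1,\bar u^2,0,0)$ has zero horizontal average (the average solves a homogeneous $1$D heat equation with zero datum), is horizontally divergence free, and carries no vertical component, and since $\uu$ is also horizontally divergence free, the horizontal vorticity $\oh:=\curlh\uh$ solves the transport--diffusion equation
\[
\partial_t\oh+\bigl(\uh+\uu\bigr)\cdot\nh\oh-\nu\Delta\oh=0,
\]
with \emph{no} vortex–stretching term and with $\diveh\bigl(\uh+\uu\bigr)=0$; the velocity is recovered by the horizontal Biot--Savart law $\uh=\nhp(-\Dh)^{-1}\oh$, so that $\|\uh(x_3)\|_{H^{1+\sigma}(\T^2_h)}\sim\|\oh(x_3)\|_{H^\sigma(\T^2_h)}$.

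First I would run the basic estimates. Testing the $\uh$–equation against $\uh$ and the $\oh$–equation against $\oh$ makes the transport and pressure terms vanish by incompressibility and periodicity; the Poincaré inequality on $\T^3$ turns the resulting identities into exponential decay of $\|\uh(t)\|_{\2}$ and $\|\oh(t)\|_{\2}$ together with the global bound $\nu\int_0^\infty\bigl(\|\nabla\uh\|_{\2}^2+\|\nabla\oh\|_{\2}^2\bigr)\,dt\lesssim\|\uh_0\|_{\2}^2+\|\oh_0\|_{\2}^2$. Next — and this is the key structural point — the scalar functions $x_3\mapsto\|\uh(t,\cdot,x_3)\|_{L^2(\T^2_h)}^2$ and $x_3\mapsto\|\oh(t,\cdot,x_3)\|_{L^2(\T^2_h)}^2$ satisfy one–dimensional heat \emph{inequalities} of the form $\partial_t\Psi-\nu\partial_3^2\Psi+c\nu\Psi\leqslant0$, because the nonlinear contributions vanish slice by slice; the maximum principle then gives that $\|\uh(t)\|_{L^\infty_v(L^2_h)}$ and $\|\nh\uh(t)\|_{L^\infty_v(L^2_h)}=\|\oh(t)\|_{L^\infty_v(L^2_h)}$ are nonincreasing, decay exponentially, and have time integrals controlled by $\nu^{-1}\|\uh_0\|_{L^\infty_v(L^2_h)}^2$ and $\nu^{-1}\|\nh\uh_0\|_{L^\infty_v(L^2_h)}^2$ — precisely the quantities entering $\Phi$.

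Second comes the propagation of the horizontal $H^\sigma$–regularity of $\oh$ for $\sigma>0$. Applying $(-\Dh)^{\sigma/2}$ to the vorticity equation, testing against it and writing the transport term in divergence form $\bigl(\uh+\uu\bigr)\cdot\nh\oh=\diveh\bigl((\uh+\uu)\oh\bigr)$, one reduces to controlling $\|(-\Dh)^{\sigma/2}\bigl((\uh+\uu)\oh\bigr)\|$; a fractional Leibniz rule together with Gagliardo--Nirenberg interpolation in $\T^2_h$ and the Biot--Savart relation (so that the surplus horizontal derivative is absorbed into the dissipation, $(-\Dh)^{\sigma/2}\uh$ is estimated through $\|\oh\|_{L^2_h}$ and $\|\nh\oh\|_{L^2_h}$, and $(-\Dh)^{\sigma/2}\uu=0$) leaves a Gronwall weight built from time integrals of $\|\uh+\uu\|$–type norms controlled in the previous step; the $\uu$–contribution is closed using Proposition~\ref{pr:global_Hs_Uunder} (which gives $\|\uu(t)\|_{H^s(\T_v)}\leqslant\|\uu_0\|_{H^s(\T_v)}$ and $\int_0^\infty\|\uu(t)\|_{L^\infty(\T_v)}^2dt\lesssim\nu^{-1}\|\uu_0\|_{H^s(\T_v)}^2$, where $s>1/2$ is used for the embedding $H^s(\T_v)\hookrightarrow L^\infty(\T_v)$), which accounts for the term $\nu^{-1}\|\uu_0^h\|_{H^s(\T_v)}^2$ in the exponent of $\mathcal{E}_1$, while the $\uh$–contribution is bounded by an exponential of the $L^\infty_v(L^2_h)$–quantities of the first step; iterating these exponentials produces exactly the double exponential $\Phi$ and yields the $L^p_v(H^\sigma_h)$–part of \eqref{eq:stong_Hs_bound_ubar}. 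Because the nonlinearity of \eqref{eq:Uover_2DstratNS} contains no vertical derivative, the vertical regularity is then propagated separately and comparatively cheaply: differentiating the equation in $x_3$ and running the same two–dimensional energy estimates in $x_h$, the top–order vertical derivatives are closed using only $\|\oh(t)\|_{L^\infty_v(L^2_h)}$ (absorbing them into the horizontal dissipation via Gagliardo--Nirenberg), and anisotropic Sobolev embeddings combine the horizontal and vertical bounds into $\uh\in\mathcal{C}(\R_+;\Hs)\cap L^2(\R_+;H^{s+1}(\T^3))$ together with \eqref{eq:stong_Hs_bound_ubar}; this bound being uniform in $t$, the usual continuation argument upgrades the local solution to a global one.

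The main obstacle is the second step: closing the $H^\sigma_h$–propagation for small $\sigma>0$ without ever invoking $\|\nh\uh\|_{\Linfty}$ (uncontrolled at this regularity), while tracking every constant so as to land exactly on the double‑exponential dependence $\Phi$. This forces one to use simultaneously the two–dimensional structure of the vorticity equation, the smoothing of the horizontal Biot--Savart operator, the maximum‑principle bounds of the first step, and the decay and smoothing of the heat flow governing $\uu$; moreover the inter‑slice coupling through $\nu\partial_3^2$ must be handled so that it never spoils the favorable signs in the energy identities — which is exactly what the ``heat inequality plus maximum principle'' device achieves, and which is what makes the otherwise non‑parabolic system \eqref{PBSe} propagate parabolic regularity in the limit.
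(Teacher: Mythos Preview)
Your proposal is broadly correct but takes a genuinely different route from the paper. The paper's proof is much shorter: it invokes as a black box \cite[Proposition~5]{Scrobo_Froude_periodic} (restated here as Proposition~\ref{prop:Linfty_integrability_uh}), which supplies directly the bound $\norm{\uh}_{L^2(\R_+;L^\infty(\T^3))}\leqslant \frac{CK}{c\nu}\Phi(U_0)\norm{\nh\uh_0}_{L^p_v(H^\sigma_h)}$ --- this is exactly where the double exponential $\Phi$ enters. It then proves, via a Bony paraproduct decomposition, the single estimate
\[
\bigl|\psca{\uu\cdot\nh\uh\mid\uh}_{H^s}\bigr|\leqslant C\bigl(\norm{\uu}_{H^s(\T^1_{\v})}+\norm{\uu}_{H^{s+1}(\T^1_{\v})}\bigr)\norm{\uh}_{H^s}\norm{\nh\uh}_{H^s}
\]
for the drift term, and closes with one isotropic $H^s$ energy estimate on \eqref{eq:Uover_2DstratNS}, using $\norm{\uh}_{L^\infty}$ as the Gronwall weight together with Proposition~\ref{pr:global_Hs_Uunder}.

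Your vorticity/maximum--principle scheme essentially \emph{reproves from scratch} the content of that cited proposition: the heat inequality $\partial_t\Psi-\nu\partial_3^2\Psi+c\nu\Psi\leqslant 0$ for the slice--wise quantities $\Psi=\norm{\uh(\cdot,x_3)}_{L^2_h}^2$ and $\norm{\oh(\cdot,x_3)}_{L^2_h}^2$ is indeed correct and is precisely the mechanism producing the inner exponential of $\Phi$, while your $H^\sigma_h$ propagation of $\oh$ produces the outer one. This buys self--containment and explains the structure of $\Phi$, at the cost of considerably more bookkeeping. One caveat: your third step --- reaching full isotropic $H^s$ by ``differentiating in $x_3$, rerunning slice--wise $2$D estimates, and combining via anisotropic Sobolev'' --- is workable in principle but indirect, and the Leibniz expansion of $\partial_3^k(\uh\cdot\nh\uh)$ produces cross terms that need care. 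Once your first two steps yield $\uh\in L^2(\R_+;L^\infty(\T^3))$ (through $\norm{\uh}_{L^\infty_v(H^{1+\sigma}_h)}\sim\norm{\oh}_{L^\infty_v(H^\sigma_h)}$ and the $2$D embedding $H^{1+\sigma}_h\hookrightarrow L^\infty_h$ for $\sigma>0$), it is cleaner to merge with the paper's argument and run a single isotropic $H^s$ energy estimate.
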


\begin{prop} \label{pr:global_Hs_uosc}
	Let $s>5/2 $ and $U_0\in\Hs$. For each $ T>0 $, we have
	\begin{equation*}
		U_{\osc}\in \cC \pare{[0, T]; \Hs}\cap L^2 \pare{[0, T]; H^{s+1}\pare{\T^3}},
	\end{equation*}
	and the following bound holds true for each $ 0\leqslant t\leqslant T $
	\begin{equation*}
		\norm{U_{\osc}\pare{t}}_{\Hs}^2 + \nu \int_0^t \norm{\nabla U_{\osc}\pare{\tau}}_{\Hs}^2d\tau \leqslant \mathcal{E}_{3, \nu, T}\pare{U_0}, 
	\end{equation*}
	where
	\begin{align}
	\label{eq:E3}	\mathcal{E}_{3, \nu, T}\pare{U_0} & = \norm{U_{\osc, 0}}^2_{\Hs} \exp\set{ \frac{1}{\nu}\ \mathcal{E}_1\pare{U_0} + T\ \norm{\uU_0}^2_{L^2\pare{\T^1_{\v}}} + \frac{1}{\nu} \pare{ \cE_{2, U_0}\pare{T} }^2 },\\
	\label{eq:E2}	\cE_{2, U_0}\pare{T} & = C \norm{U_{\osc, 0}}_{\2}^2 \exp \set{\frac{\cE_1\pare{U_0}}{\nu} + T \norm{\uU_0}_{H^s\pare{\T^1_{\v}}}^2 }
	\end{align}
	and $\mathcal{E}_1\pare{U_0}$ is defined as in Proposition \ref{pr:global_Hs_ubarh}.
\end{prop}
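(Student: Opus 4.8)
The plan is to close a global-in-time \emph{a priori} estimate for $\norm{U_{\osc}\pare{t}}_{\Hs}$ by a Littlewood--Paley energy method applied to \eqref{eq:lim_Uosc}, and then to upgrade the local solution of Proposition \ref{prop:limit_osc_local} to a global one via the standard continuation criterion for semilinear parabolic equations. Recall that, by Lemma \ref{lem:proj_tA_osc}, \eqref{eq:lim_Uosc} carries the \emph{full} viscosity $\nu\Delta$, so local well-posedness in $\Hs$, $s>5/2$, is classical and is not the issue: the whole point is to keep the $\Hs$ norm finite on arbitrary time intervals. The three nonlinear contributions $\widetilde{\cQ}_1\pare{U_{\osc},U_{\osc}}$, $2\,\widetilde{\cQ}_1\pare{\bU,U_{\osc}}$ and $\cB\pare{\uU,U_{\osc}}$ must be treated separately, the first being the genuinely dangerous one.

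First I would run the $\2$ estimate, which produces $\cE_{2,U_0}\pare{T}$ of \eqref{eq:E2}. Pairing \eqref{eq:lim_Uosc} with $U_{\osc}$ in $\2$, the self-interaction drops out, $\psc{\widetilde{\cQ}_1\pare{U_{\osc},U_{\osc}}}{U_{\osc}}_{\2}=0$: this is the familiar energy cancellation for a divergence-free transport nonlinearity, and it persists after the resonant truncation in \eqref{eq:limit_cQ1} because the resonance set $\set{\omega^a\pare{k}+\omega^b\pare{m}=\omega^c\pare{n},\ k+m=n}$ and the associated trilinear form are invariant under permutation of the interacting modes, exactly as in \cite{Gallagher_schochet} or \cite{paicu_rotating_fluids}. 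The remaining two terms are \emph{linear} in $U_{\osc}$: by Proposition \ref{pr:global_Hs_ubarh} one controls $\bU$ globally, with $\int_0^{\infty}\norm{\bU\pare{\tau}}_{H^{s+1}\pare{\T^3}}^2 \textnormal{d}\tau \leqslant \mathcal{E}_1\pare{U_0}/\nu$, and by Proposition \ref{pr:global_Hs_Uunder} one controls $\uU$ globally, with $\norm{\uU\pare{t}}_{H^s\pare{\T^1_{\v}}}\leqslant\norm{\uU_0}_{H^s\pare{\T^1_{\v}}}$ (the component $\uU^4$ is stationary and $\uu$ is damped by the heat flow). Hölder's and Young's inequalities --- the latter to absorb the transported derivative into $\nu\norm{\nabla U_{\osc}}_{\2}^2$ --- together with Grönwall's lemma then give $\norm{U_{\osc}\pare{t}}_{\2}^2 + \nu\int_0^t\norm{\nabla U_{\osc}}_{\2}^2 \leqslant \cE_{2,U_0}\pare{T}$.

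Next comes the $\Hs$ estimate. Applying $\triangle_q$ to \eqref{eq:lim_Uosc}, pairing with $\triangle_q U_{\osc}$ in $\2$, multiplying by $2^{2qs}$ and summing over $q$, the viscous term yields a gain $\nu\norm{U_{\osc}}_{H^{s+1}\pare{\T^3}}^2$. The two transport-type contributions are handled by the usual commutator estimates for the Bony decomposition (legitimate since $s>5/2>3/2$, and using the anisotropic Bernstein inequalities of Lemma \ref{bernstein inequality} for $\cB\pare{\uU,U_{\osc}}$): the commutators $[\triangle_q,\bU\cdot\nabla]U_{\osc}$ and the analogous terms for $\uU$ produce Grönwall coefficients dominated by $\norm{\nabla\bU}_{L^\infty}$ and by $\norm{\uU}$-type norms, whose time integrals are bounded, after one more Young splitting, by $\mathcal{E}_1\pare{U_0}/\nu$ and by $T\norm{\uU_0}_{L^2\pare{\T^1_{\v}}}^2$ respectively. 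The delicate term is $\widetilde{\cQ}_1\pare{U_{\osc},U_{\osc}}$, and here the smoothing of the order-zero multiplier $\chi\pare{D}$ recorded in Remark \ref{rem:smoothness_bilinear_Fourier} is indispensable: a careful description of the geometry of the resonant set shows that $\widetilde{\cQ}_1\pare{U_{\osc},U_{\osc}}$ is strictly more regular than $U_{\osc}\cdot\nabla U_{\osc}$, which lets one bound $\bigl|\psc{\widetilde{\cQ}_1\pare{U_{\osc},U_{\osc}}}{U_{\osc}}_{\Hs}\bigr|$ by $\tfrac{\nu}{4}\norm{U_{\osc}}_{H^{s+1}\pare{\T^3}}^2$ plus a term of two-dimensional Navier--Stokes (Ladyzhenskaya) type, whose corresponding Grönwall coefficient, after the $\2$ estimate of the previous step, has time integral controlled by $\cE_{2,U_0}\pare{T}^2/\nu$. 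Collecting the three contributions and applying Grönwall's lemma gives exactly $\norm{U_{\osc}\pare{t}}_{\Hs}^2 + \nu\int_0^t\norm{\nabla U_{\osc}}_{\Hs}^2 \leqslant \mathcal{E}_{3,\nu,T}\pare{U_0}$ with $\mathcal{E}_{3,\nu,T}$ as in \eqref{eq:E3}; since this is finite for every $T>0$, the continuation criterion yields the global solution.

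The main obstacle is the rigorous proof and exploitation of the smoothing of $\widetilde{\cQ}_1$. One must analyse the resonant manifold $\set{\omega^a\pare{k}+\omega^b\pare{m}=\omega^c\pare{n},\ k+m=n,\ k_h,m_h,n_h\neq 0}$ for $\omega^\pm\pare{n}=\pm\av{\cn_h}/\av{\cn}$, and show that on it the output frequency $n$ is sufficiently constrained relative to $k$ and $m$ to produce a genuine gain of regularity for the bilinear operator $\chi\pare{D}$, uniformly across dyadic blocks; without this gain, \eqref{eq:lim_Uosc} is exactly as supercritical as the three-dimensional Navier--Stokes equations and no global bound is available. A secondary, purely technical difficulty is the nonlocal vertical structure of $\cB\pare{\uU,U_{\osc}}$ in \eqref{eq:tQ2_e+-} (it couples the Fourier modes $\pare{0,2n_3}$ and $\pare{n_h,-n_3}$), which has to be carried through the commutator estimates with the anisotropic Littlewood--Paley apparatus.
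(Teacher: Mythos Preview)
Your strategy is correct and matches the paper's: first an $L^2$ estimate producing $\cE_{2,U_0}\pare{T}$, then an $\Hs$ estimate where the dangerous self-interaction $\widetilde{\cQ}_1\pare{U_{\osc},U_{\osc}}$ is controlled by the smoothing built into the resonant set, the two remaining (linear in $U_{\osc}$) terms give Gr\"onwall coefficients bounded by $\mathcal{E}_1\pare{U_0}/\nu$ and $T\norm{\uU_0}^2$, and one closes by Gr\"onwall.

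Two remarks on where the paper is a bit more concrete than your outline. First, the ``geometry of the resonant set'' you invoke is precisely this: for fixed $\pare{k_h,n}$ with $n_h\neq 0$, the equation $\omega^a\pare{k}+\omega^b\pare{n-k}=\omega^c\pare{n}$ is a polynomial of degree eight in $k_3$, so the fiber $\set{k_3:\pare{k,n}\in\mathcal{K}^\star}$ has at most eight points. This yields the product rule $\bigl|\sum_{\mathcal{K}^\star}\widehat a_k\,\widehat b_{n-k}\,\widehat c_n\bigr|\leqslant C\norm{a}_{H^{1/2}}\norm{b}_{H^{1/2}}\norm{c}_{L^2}$, from which a Bony decomposition gives the trilinear bound $\bigl|\psc{\widetilde{\cQ}_1\pare{U_{\osc},U_{\osc}}}{U_{\osc}}_{\Hs}\bigr|\leqslant C\norm{U_{\osc}}_{L^2}^{1/2}\norm{\nabla U_{\osc}}_{L^2}^{1/2}\norm{U_{\osc}}_{\Hs}^{1/2}\norm{\nabla U_{\osc}}_{\Hs}^{3/2}$; after Young this produces exactly the Gr\"onwall weight $\norm{U_{\osc}}_{L^2}^2\norm{\nabla U_{\osc}}_{L^2}^2$, whose time integral is $\leqslant\cE_{2,U_0}\pare{T}^2/\nu$. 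Second, your ``secondary technical difficulty'' with $\cB\pare{\uU,U_{\osc}}$ is in fact a non-issue and needs no anisotropic Littlewood--Paley or commutators: since in \eqref{eq:tQ2_e+-} the output frequency $n$ and the $U_{\osc}$-input frequency $\pare{n_h,-n_3}$ have the same modulus, one has the exact identity $\pare{-\Delta}^{s/2}\cB\pare{\uU,U_{\osc}}=\cB\pare{\uU,\pare{-\Delta}^{s/2}U_{\osc}}$, so the $\Hs$ pairing reduces to an $L^2$ pairing and a one-line H\"older estimate gives \eqref{eq:estimate_linear_term_osc2}. The paper likewise avoids commutators for $\widetilde{\cQ}_1\pare{\bU,U_{\osc}}$, using only that $H^{s+1}$ is an algebra for $s>1/2$.
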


\bigskip

\subsection{Proof of Proposition \ref{pr:global_Hs_Uunder}}

The system \eqref{eq:Uover_2DstratNS} is a classical heat equation, the solution of which is well known in the literature. Here, we only remark that classical energy estimates imply
\begin{equation*}
	\norm{\uu\pare{t}}^2_{H^{s}\pare{\T^1_{\v}}} + 2\nu \int_0^t \norm{\partial_3 \uu\pare{\tau}}^2_{H^{s}\pare{\T^1_{\v}}} = \norm{\uu_0}^2_{H^{s}\pare{\T^1_{\v}}}. 
\end{equation*}
Since $ \uu $ has zero vertical average, $ \uu\in L^2 \pare{\R_+ ;H^{s+1}\pare{\T^1_{\v}} } $ as well. \hfill $\square$

\begin{rem}
	We would like to mention that
	\begin{equation} \label{eq:iso_vert_Hs_uunderline}
		\left\| \underline{u}^h \right\|_{H^s \left( \T^1_{\v} \right)} = \left\| \underline{u}^h \right\|_{\cFHs},
	\end{equation}
	hence even if $ \underline{u}^h $ depends on the vertical variable only it still inherits the same isotropic regularity.
\end{rem}

\bigskip

\subsection{Proof of Proposition \ref{pr:global_Hs_ubarh}}

We start by recalling a result proved in \cite[Proposition 5]{Scrobo_Froude_periodic}
\begin{prop} \label{prop:Linfty_integrability_uh}
	Let $\uh$ be a  solution of \eqref{eq:Uover_2DstratNS} with initial data $ \uh_0$ and $\nh \uh_0$  belonging to $L^\infty_v \left( H^\sigma_h \right)$, for some $\sigma\geqslant 1 $.  Then, we have
	\begin{equation*}
		\uh \in L^2 \left( \R_+; {\cFLinfty} \right),
	\end{equation*}
	and in particular
	\begin{equation*}
		\left\| \uh \right\|_{L^2 \left( \R_+; {\cFLinfty} \right)} \leqslant \frac{C{K}}{c\nu} \; \Phi \left( U_0 \right)  \left\| \nh \uh_0 \right\|_{L^p_v \left( H^\sigma_h \right)},
	\end{equation*}
	where $ \Phi \left( U_0 \right) $ is defined as in Proposition \ref{pr:global_Hs_ubarh} and  $c, C,K $ are positive constants.
\end{prop}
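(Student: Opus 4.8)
The plan is to establish the bound by a priori estimates on a smooth solution of \eqref{eq:Uover_2DstratNS}, following \cite[Proposition 5]{Scrobo_Froude_periodic}; the only genuinely new term compared with that reference, the drift $\uu\cdot\nh\uh$, is harmless because $\uu=\uu(t,x_3)$ is independent of $x_h$ and horizontally divergence-free, so it commutes with every horizontal Fourier multiplier and contributes nothing to the horizontal energy identities. First I would pass to the horizontal vorticity $\oh=\curlh\uh$, the scalar curl of the horizontal velocity, which for each fixed $x_3$ reconstructs $\uh$ through the planar Biot--Savart law $\uh=\nhp\pare{-\Dh}^{-1}\oh$. Taking $\curlh$ of \eqref{eq:Uover_2DstratNS} removes the pressure and, using $\diveh\uh=\diveh\uu=0$ together with the two-dimensional identity $\curlh\pare{w\cdot\nh w}=w\cdot\nh\,\curlh w$, turns the system into the scalar transport--diffusion equation $\dd_t\oh+\pare{\uh+\uu}\cdot\nh\oh-\nu\Delta\oh=0$. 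Freezing $x_3$ and pairing with $\oh$ in $L^2\pare{\T^2_h}$, the transport term vanishes (horizontal divergence-free), while the vertical part of $\Delta$ gives, after integration by parts in $x_h$, the favourable term $-\tfrac{\nu}{2}\dd_3^2\norm{\oh(t,x_3)}_{L^2_h}^2$; thus $\psi(t,x_3):=\norm{\oh(t,x_3)}_{L^2_h}^2$ satisfies $\dd_t\psi-\nu\dd_3^2\psi+2\nu\norm{\nh\oh(t,x_3)}_{L^2_h}^2\leqslant0$, and the one-dimensional maximum principle gives the uniform enstrophy estimate $\norm{\oh(t)}_{L^\infty_v(L^2_h)}\leqslant\norm{\oh_0}_{L^\infty_v(L^2_h)}$ for all $t\geqslant0$ (the assumption $\sigma\geqslant1$ guarantees $\oh_0\in L^\infty_v(L^2_h)$). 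Integrating the same differential inequality in $x_3$, and using the basic energy identity for \eqref{eq:Uover_2DstratNS}, one also records the finite dissipation integrals $\int_0^\infty\norm{\nabla\oh(t)}_{\2}^2\,\textnormal{d}t\leqslant\tfrac{1}{2\nu}\norm{\oh_0}_{\2}^2$ and $\int_0^\infty\norm{\nabla\uh(t)}_{\2}^2\,\textnormal{d}t\leqslant\tfrac{1}{2\nu}\norm{\uh_0}_{\2}^2$.

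Next I would propagate the anisotropic regularity $L^\infty_v(H^\sigma_h)$. Applying $\pare{-\Dh}^{\sigma/2}$ to the vorticity equation, freezing $x_3$, pairing with $\pare{-\Dh}^{\sigma/2}\oh$ in $L^2\pare{\T^2_h}$, and estimating the transport term with a Kato--Ponce commutator (the $\uu$-term commutes with $\pare{-\Dh}^{\sigma/2}$ and drops out), one obtains
\begin{equation*}
	\dd_t\norm{\oh(t,x_3)}_{\dot H^\sigma_h}^2-\nu\dd_3^2\norm{\oh(t,x_3)}_{\dot H^\sigma_h}^2+2\nu\norm{\oh(t,x_3)}_{\dot H^{\sigma+1}_h}^2\leqslant C\,\norm{\nh\uh(t,x_3)}_{L^\infty_h}\,\norm{\oh(t,x_3)}_{\dot H^\sigma_h}^2 .
\end{equation*}
The crux of the argument is that $H^1(\T^2)$ does not embed into $L^\infty(\T^2)$, so $\norm{\nh\uh}_{L^\infty_h}$ cannot be closed by the enstrophy alone; following \cite{Scrobo_Froude_periodic} one invokes a Brezis--Gallouet logarithmic interpolation of the type
\begin{equation*}
	\norm{\nh\uh(t,x_3)}_{L^\infty_h}\leqslant C\,\norm{\oh(t,x_3)}_{L^2_h}\,\pare{1+\log\pare{e+\frac{\norm{\oh(t,x_3)}_{H^\sigma_h}}{\norm{\oh(t,x_3)}_{L^2_h}}}}^{1/2},
\end{equation*}
inserts the uniform enstrophy bound of the first step, and runs a Gronwall argument on $\sup_{x_3}\norm{\oh(t,x_3)}_{\dot H^\sigma_h}^2$ (the comparison again justified by the maximum principle). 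The nested exponentials generated by the logarithmic feedback are precisely the quantity $\Phi(U_0)$ of the statement, and one ends up with
\begin{equation*}
	\sup_{t\geqslant0}\norm{\oh(t)}_{L^\infty_v(H^\sigma_h)}^2+\nu\int_0^\infty\norm{\oh(t)}_{L^2_v(H^{\sigma+1}_h)}^2\,\textnormal{d}t\leqslant C\,\Phi(U_0)\,\norm{\nh\uh_0}_{L^\infty_v(H^\sigma_h)}^2 ,
\end{equation*}
and, by Biot--Savart, the analogous control of $\uh$ in $L^\infty\pare{\R_+;L^\infty_v(H^{\sigma+1}_h)}\cap L^2\pare{\R_+;L^2_v(H^{\sigma+2}_h)}$.

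Finally I would extract the $L^2\pare{\R_+;\Linfty}$ norm by an anisotropic interpolation. Writing $\Linfty=L^\infty_v L^\infty_h$, the two-dimensional Sobolev embedding $H^\sigma_h\hookrightarrow L^\infty_h$ (for $\sigma>1$; the endpoint $\sigma=1$ reached with one further logarithmic loss that the previous step absorbs into $\Phi$) combined with a one-dimensional Gagliardo--Nirenberg inequality in $x_3$ bounds $\norm{\uh(t)}_{\Linfty}^2$ by the product of a factor bounded uniformly in time (by the second step) and a factor whose time integral is finite (the dissipation from the first two steps); a Cauchy--Schwarz in time then gives, schematically,
\begin{equation*}
	\int_0^\infty\norm{\uh(t)}_{\Linfty}^2\,\textnormal{d}t\leqslant C\,\Big(\sup_{t\geqslant0}\norm{\uh(t)}_{L^\infty_v(H^\sigma_h)}^2\Big)^{1/2}\Big(\int_0^\infty\norm{\uh(t)}_{L^2_v(H^{\sigma+1}_h)}^2\,\textnormal{d}t\Big)^{1/2}\leqslant\pare{\frac{CK}{c\nu}}^2\Phi(U_0)^2\,\norm{\nh\uh_0}_{L^p_v(H^\sigma_h)}^2 ,
\end{equation*}
where $K$ and $c$ are the geometric constants coming from these embeddings; taking square roots yields the stated bound.

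\textbf{Main obstacle.} The essential difficulty is that $\Linfty$ is critical with respect to the natural $H^1\cap L^2_t H^2$ parabolic framework in dimension three --- equivalently, at the level of horizontal slices $H^1(\T^2)\not\hookrightarrow L^\infty(\T^2)$ --- so one cannot close the estimate linearly and is forced to use logarithmic Brezis--Gallouet inequalities and the attendant nested Gronwall arguments; this is exactly what produces the double exponential in $\Phi(U_0)$. A secondary, bookkeeping-type difficulty is to keep the vertical diffusion $\dd_3^2$ --- which is favourable but couples the $x_3$-slices --- under control when passing to $\sup_{x_3}$, which is handled throughout by the one-dimensional maximum principle.
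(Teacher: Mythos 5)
You should know at the outset that the paper does not actually prove Proposition \ref{prop:Linfty_integrability_uh}: it imports it verbatim from \cite[Proposition 5]{Scrobo_Froude_periodic}, and the only new mathematical content it supplies is the remark that the extra drift $\uu\cdot\nh\uh$ (absent from the system treated in that reference) does not obstruct the argument, because of the cancellation $\int_{\T^2_h}\pare{\uu\cdot\nh\uh}\cdot\uh\,\textnormal{d}y_h=\tfrac12\,\uu\cdot\int_{\T^2_h}\nh\av{\uh}^2\,\textnormal{d}y_h=0$. Your identification of that term as the only genuinely new ingredient, and your reason for discarding it ($\uu$ is independent of $x_h$ and horizontally divergence-free, hence transparent to all horizontal energy identities and Fourier multipliers), coincides exactly with the paper's remark. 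The rest of your proposal is a reconstruction of the cited proof rather than of anything in this paper; the route you describe (vorticity formulation, the one-dimensional maximum principle in $x_3$ applied to $\norm{\oh(t,\cdot,x_3)}_{L^2_h}^2$, propagation of anisotropic regularity via a logarithmic inequality and a nested Gronwall producing the double exponential $\Phi(U_0)$, then anisotropic interpolation to reach $L^2\pare{\R_+;\cFLinfty}$) is the standard one in this literature and is consistent with the shape of $\Phi(U_0)$, which involves only $L^\infty_v\pare{L^2_h}$ norms of $\uh_0$ and $\nh\uh_0$ inside the exponentials.

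Two steps of the sketch are not correct as written and would need repair in a full proof. First, the logarithmic inequality you invoke, $\norm{\nh\uh}_{L^\infty_h}\leqslant C\norm{\oh}_{L^2_h}\pare{1+\log\pare{\cdot}}^{1/2}$, is false: a Brezis--Gallouet bound with a square-rooted logarithm requires the full $H^1_h$ norm of $\nh\uh$ (equivalently $\norm{\oh}_{H^1_h}$) as prefactor, not merely $\norm{\oh}_{L^2_h}=\norm{\nh\uh}_{L^2_h}$ — an $L^2$ prefactor cannot control low frequencies summed in $L^\infty$ with only logarithmic loss. Second, and more seriously for the statement being proved, the bound is claimed uniformly on all of $\R_+$ ($\Phi(U_0)$ contains no $T$), so the quantity appearing in the Gronwall exponent must be \emph{time-integrable on} $\R_+$, not merely bounded; your sketch integrates $\norm{\nh\uh(t,x_3)}_{L^\infty_h}$ in time without explaining why this integral is finite globally. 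The bounded-but-not-integrable piece $\norm{\oh}_{L^2_h}$ must be eliminated (e.g.\ by Poincar\'e in $x_h$, available since $\uh$ has zero horizontal average, so that only the dissipated quantities $\norm{\nh\oh}_{L^2_h}^2$ and $\norm{\nh\uh}_{L^2_h}^2$ enter the exponent), and one must also address the fact that the maximum principle controls $\sup_{x_3}$ of the conserved quantity but not $\sup_{x_3}$ of the dissipation integral. These are exactly the delicate points that the cited \cite[Proposition 5]{Scrobo_Froude_periodic} is designed to handle, so deferring to it — as the paper does — is not merely a shortcut.
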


\begin{rem}
The reader may notice that \cite[Proposition 5]{Scrobo_Froude_periodic} is applied on a limit system which is slightly different than \eqref{eq:Uover_2DstratNS}, i.e. on the system
\begin{equation}\label{eq:limit_system_other_work}
\left\lbrace
\begin{aligned}
& \partial_t \uh + \uh\cdot\nh \uh -\nu\Delta\uh =-\nabla\bar{p}, \\
& \dive \ \uh =0. 
\end{aligned}
\right. 
\end{equation}
The only difference between \eqref{eq:limit_system_other_work} and \eqref{eq:Uover_2DstratNS} is the presence in \eqref{eq:Uover_2DstratNS} of the term $ \uu\cdot\nh \uh $. Such term though does not pose an obstruction to the application of \cite[Proposition 5]{Scrobo_Froude_periodic} to the limit system \eqref{eq:Uover_2DstratNS}; the proof of such result is in fact based on the fact that the following nonlinear cancellation
\begin{equation*}
\int_{\R^2_h} \pare{\uh\cdot\nh \uh}\cdot \uh \text{d} y_h =0, 
\end{equation*}
holds true for \eqref{eq:limit_system_other_work} (and hence as well for \eqref{eq:Uover_2DstratNS}) since $ \dive \uh =0 $. Indeed though the term  $ \uu\cdot\nh \uh $ enjoys as well a nonlinear cancellation, since
\begin{align*}
\int _{\R_h^2}   \pare{\uu\cdot\nh \uh}\cdot\uh  \text{d} y_h = \frac{1}{2} \ \uu \int _{\R_h^2}\nabla\av{\uh}^2 \text{d} y_h =0, 
\end{align*}
being the vector field periodic. Whence \cite[Proposition 5]{Scrobo_Froude_periodic} can be applied to the limit system \eqref{eq:Uover_2DstratNS}. 
\end{rem}

Next, we need the following estimate
\begin{lemma} 
	Let $ \uh $ be the solution of \eqref{eq:Uover_2DstratNS} and $ \uu $ the solution of \eqref{eq:Uunder_heat2D}, then, for $ s>1/2 $, we have
	\begin{equation} \label{eq:bound_Hs_termine_lineare}
		\left| \psca{\underline{u}^h \cdot \nh \uh \,\big|\, \uh }_{\Hs} \right| \\ \leqslant C  \left(  \left\|  \underline{u}^h \right\|_{H^{s } \left( \T^1_{\v} \right)} +  \left\|  \underline{u}^h \right\|_{H^{s +1} \left( \T^1_{\v} \right)} \right)\left\|  \uh  \right\|_{{\cFHs}} \left\| \nh \uh  \right\|_{{\cFHs}}.
	\end{equation}
\end{lemma}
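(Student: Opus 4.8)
The plan is to take advantage of the fact that $\uu$ depends on the vertical variable $x_3$ only, hence is automatically horizontally divergence free, and to put the transport term in divergence form before pairing it with $\uh$ in $\Hs$. First, since $\diveh\uu=0$, for $k=1,2$ one has $\uu\cdot\nh\uh^k=\diveh\pare{\uu\,\uh^k}$. The $\Hs$ inner product being the weighted $\ell^2$ inner product $\sum_n\pare{1+\av{\cn}^2}^s\hat f_n\overline{\hat g_n}$ of the Fourier coefficients, an integration by parts in the horizontal variables transfers the horizontal divergence onto the second factor, giving
\[
\psca{\uu\cdot\nh\uh\,\big|\,\uh}_{\Hs}=-\sum_{k=1}^{2}\sum_{i=1}^{2}\psca{\uu^i\,\uh^k\,\big|\,\partial_i\uh^k}_{\Hs}.
\]
By the Cauchy--Schwarz inequality in the weighted $\ell^2$ and the elementary bound $\norm{\partial_i\uh^k}_{\Hs}\leqslant\norm{\nh\uh}_{\Hs}$ for $i=1,2$, this yields $\av{\psca{\uu\cdot\nh\uh\,|\,\uh}_{\Hs}}\leqslant C\,\norm{\uu\otimes\uh}_{\Hs}\,\norm{\nh\uh}_{\Hs}$, so it remains to establish the product estimate $\norm{\uu\otimes\uh}_{\Hs}\leqslant C\pare{\norm{\uu}_{H^s\left(\T^1_\v\right)}+\norm{\uu}_{H^{s+1}\left(\T^1_\v\right)}}\norm{\uh}_{\Hs}$.

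To prove this product estimate I would fix $i,k\in\set{1,2}$ and use a Bony decomposition $\uu^i\uh^k=T_{\uu^i}\uh^k+T_{\uh^k}\uu^i+R\pare{\uu^i,\uh^k}$. The crucial remark is that each dyadic block $\Tv\uu^i$ is a function of the single variable $x_3$, so by the one-dimensional Bernstein inequality
\[
\norm{\SQ\uu^i}_{\Linfty}\leqslant C\,\norm{\uu^i}_{H^s\left(\T^1_\v\right)},\qquad \norm{\Tv\uu^i}_{\Linfty}\leqslant C\,c_{q'}\,2^{-q'\left(s+\frac12\right)}\norm{\uu^i}_{H^{s+1}\left(\T^1_\v\right)},
\]
the first bound using $H^s\left(\T^1_\v\right)\hookrightarrow L^\infty$ (valid since $s>1/2$) and the second carrying an extra factor $2^{q'/2}$ relative to the isotropic case, which is compensated by the $H^{s+1}$ norm and produces a summable gain $2^{-q'/2}$. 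For $T_{\uu^i}\uh^k=\sum_{q'}\SQ\uu^i\,\Tv\uh^k$ one places $\SQ\uu^i$ in $\Linfty$ and $\Tv\uh^k$ in $\2$, which contributes $\lesssim\norm{\uu}_{H^s\left(\T^1_\v\right)}\norm{\uh}_{\Hs}$. For $\tq\pare{T_{\uh^k}\uu^i}$, bounded by $\sum_{\av{q-q'}\leqslant4}\norm{\Tv\uu^i}_{\Linfty}\norm{\SQ\uh^k}_{\2}$, and for $\tq R\pare{\uu^i,\uh^k}$, bounded by $\sum_{q'\geqslant q-4}\norm{\Tv\uu^i}_{\Linfty}\norm{\Tv\uh^k}_{\2}$, one uses $\norm{\SQ\uh^k}_{\2}\leqslant\norm{\uh^k}_{\Hs}$, the estimate $\norm{\Tv\uh^k}_{\2}\leqslant c_{q'}2^{-q's}\norm{\uh}_{\Hs}$, and the decay furnished by the Bernstein bound; a summation of the resulting $\ell^2$ sequences in $q$ yields in both cases a contribution $\lesssim\norm{\uu}_{H^{s+1}\left(\T^1_\v\right)}\norm{\uh}_{\Hs}$. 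Adding the three contributions proves the product estimate and hence the lemma.

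The main obstacle is that for $s$ close to $1/2$ the field $\uh$ does not belong to $\Linfty\pare{\T^3}$, so in the product $\uu\otimes\uh$ one may never place $\uh$ (or $\nh\uh$) in $\Linfty$ — all of the $\Linfty$ control must be extracted from $\uu$. This is possible only because $\uu$ depends on a single variable: the one-dimensional embedding $H^s\left(\T^1_\v\right)\hookrightarrow L^\infty$ handles the low-frequency paraproduct, and the one-dimensional Bernstein inequality handles the two remaining pieces. The price for using the one-dimensional Bernstein gain $2^{q'/2}$ instead of the isotropic one is precisely the appearance of $\norm{\uu}_{H^{s+1}\left(\T^1_\v\right)}$ next to $\norm{\uu}_{H^s\left(\T^1_\v\right)}$ on the right-hand side; there is no way to close the estimate with the $H^s$-norm of $\uu$ alone, which is exactly why both norms occur in the statement.
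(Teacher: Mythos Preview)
Your proof is correct and follows essentially the same strategy as the paper: a Bony decomposition combined with the fact that $\uu$ depends only on $x_3$, so that the low-frequency piece is controlled via the one-dimensional embedding $H^s(\T^1_\v)\hookrightarrow L^\infty$ and the high-frequency pieces via the one-dimensional Bernstein inequality, the latter forcing the $H^{s+1}(\T^1_\v)$ norm. The only organizational difference is that you first integrate by parts to reduce to the product estimate $\norm{\uu\otimes\uh}_{\Hs}$ and then apply Cauchy--Schwarz, whereas the paper works directly on the trilinear form $\psca{\tq(\uu\cdot\nh\uh)\,|\,\tq\uh}_{\2}$ and uses the anisotropic H\"older splitting $L^2_v(L^\infty_h)\times L^\infty_v(L^2_h)$ in the remainder term; both routes rely on the same ingredients and yield the same bound.
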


\begin{proof}

Applying the dyadic cut-off operator $\tq$ to $\underline{u}^h \cdot \nh \uh$, taking the $L^2$-scalar product of the obtain quantity with $\tq \uh$ and applying the Bony decomposition, we get
\begin{equation*}
	\left| \psca{\tq \left( \underline{u}^h \cdot \nh \uh \right) \,\big|\, \tq \uh }_{\cFLtwo} \right| \leqslant B^1_q + B^2_q,
\end{equation*}
where
\begin{align*}
	B^1_q &= \sumf \left| \psca{ \tq \left( S_{q'-1}\underline{u}^h \triangle_{q'}\nh \uh  \right) \,\big|\, \tq \uh  }_{L^2} \right|\\
	B^2_q &= \sumi \left| \psca{ \tq\left( \triangle_{q'}\underline{u}^h S_{q'+2}\nh \uh \right) \,\big|\, \tq \uh  }_{L^2} \right|.
\end{align*}

Applying H\"older inequality and using \eqref{regularity_dyadic} on the term $ B^1_q $, we deduce
\begin{equation*}
	B^1_q \leqslant C b_q 2^{-2qs} \left\|  S_{q'-1}\underline{u}^h \right\|_{L^\infty} \left\| \nh \uh  \right\|_{{\cFHs}}\left\|  \uh  \right\|_{{\cFHs}}.
\end{equation*}
Since $ \underline{u}^h $ only depends on the vertical variable, thanks to the embedding $ H^s \left( \T^1_{\v} \right) \hra L^\infty \left( \T^1_{\v} \right), \ s> 1/2 $, we deduce
\begin{equation*}
 	\left\|  S_{q'-1}\underline{u}^h \right\|_{L^\infty} \leqslant \left\|  \underline{u}^h \right\|_{L^\infty \left( \T^1_{\v} \right)} \leqslant \left\|  \underline{u}^h \right\|_{H^s \left( \T^1_{\v} \right)},
\end{equation*}
and whence,
\begin{equation}
	\label{Bunoq} B^1_q \leqslant C b_q 2^{-2qs} \left\|  \underline{u}^h \right\|_{H^s \left( \T^1_{\v} \right)} \left\| \nh \uh  \right\|_{{\cFHs}}\left\|  \uh  \right\|_{{\cFHs}}.
\end{equation}

Next, we apply H\"older inequality to the term $ B^2_q $ and get 
\begin{equation*}
	B^2_q \leqslant \sumi \left\| \tq \uh \right\|_{{\cFLtwo}} \left\| \triangle_{q'} \underline{u}^h \right\|_{L^2_v \left( L^\infty_h \right)} \left\| \nh \uh \right\|_{L^\infty_v \left( L^2_h \right)}.
\end{equation*}
Bernstein inequality and Estimates \eqref{regularity_dyadic} and \eqref{eq:iso_vert_Hs_uunderline} yield
\begin{equation*}
	\left\| \triangle_{q'} \underline{u}^h \right\|_{L^2_v \left( L^\infty_h \right)} \leqslant C c_q 2^{q'-\left( q'+1 \right)s} \left\| \underline{u}^h \right\|_{H^{s+1}\left( \T^3 \right)} = \ C c_q 2^{-q's} \left\| \underline{u}^h \right\|_{H^{s+1}\left( \T^1_{\v} \right)}.
\end{equation*}
Since $ {\cFHs}\hra H^{0,s}\hra L^\infty_v \left( L^2_v \right), \ s>1/2 $, we have
\begin{equation*}
	\left\| \nh \uh \right\|_{L^\infty_v \left( L^2_h \right)} \leqslant C \left\| \nh \uh \right\|_{{\cFHs}}.
\end{equation*}
Applying once again Estimate \eqref{regularity_dyadic}, we deduce
\begin{equation} \label{Bdueq}
	B^2_q \leqslant  C b_q 2^{-2qs} \left\|  \underline{u}^h \right\|_{H^{s +1} \left( \T^1_{\v} \right)} \left\| \nh \uh  \right\|_{{\cFHs}}\left\|  \uh  \right\|_{{\cFHs}}.
\end{equation}
Now, combining \eqref{Bunoq} and \eqref{Bdueq} finaly implies
\begin{equation*}
	\left| \psca{ \tq \left( \underline{u}^h \cdot \nh \uh \right) \,\big|\, \tq \uh }_{\cFLtwo} \right| \leqslant  C b_q 2^{-2qs} \left( \left\|  \underline{u}^h \right\|_{H^{s } \left( \T^1_{\v} \right)} +  \left\|  \underline{u}^h \right\|_{H^{s +1} \left( \T^1_{\v} \right)} \right)\left\|  \uh  \right\|_{{\cFHs}} \left\| \nh \uh  \right\|_{{\cFHs}}.
\end{equation*}

\end{proof}

\bigskip

\noindent \textit{Proof of Proposition \ref{pr:global_Hs_ubarh}.} We multiply \eqref{eq:Uover_2DstratNS} by $\pare{-\Delta}^{s} \uh$, integrate the obtained quantity over $\mathbb{T}^3$. Using Inequality \eqref{eq:bound_Hs_termine_lineare} and the following inequality
\begin{equation*}
	\av{\ps{\uh\cdot\nh \uh}{\uh}_{\Hs}}\leqslant C \norm{\uh}_{L^\infty}\norm{ \uh}_{\Hs}\norm{\nabla \uh}_{\Hs}, 
\end{equation*}
we deduce that
\begin{multline} \label{ultima?}
	\frac{1}{2}\frac{d}{dt}\left\| \uh \right\|_{\cFHs}^2 + \nu \left\| \uh \right\|_{H^{s+1}\left( \R^3 \right)}^2\\
	\leqslant C \left( \left\| \uh \right\|_{{\cFLinfty}} +\left\|\underline{u}^h \right\|_{H^{s } \left( \T^1_{\v} \right)} + \left\|  \underline{u}^h \right\|_{H^{s +1} \left( \T^1_{\v} \right)}  \right) \left\| \uh \right\|_{\cFHs} \left\| \uh \right\|_{H^{s+1}\left( \R^3 \right)}.
\end{multline}
Then, Young inequality implies
\begin{multline*}
	\left\| \uh \right\|_{{\cFLinfty}} \left\| \uh \right\|_{\cFHs} \left\| \uh \right\|_{H^{s+1}\left( \R^3 \right)} \leqslant \frac{\nu}{2}\left\| \uh \right\|_{H^{s+1}\left( \R^3 \right)}^2\\ 
	+ C \left( \left\| \uh \right\|_{{\cFLinfty}}^2 +\left\|\underline{u}^h \right\|_{H^{s } \left( \T^1_{\v} \right)}^2 + \left\|  \underline{u}^h \right\|_{H^{s +1} \left( \T^1_{\v} \right)}^2  \right) \left\| \uh \right\|_{\cFHs}^2,
\end{multline*}
which, together with \eqref{ultima?} and Gronwall lemma, leads to
\begin{multline*}
	\left\| \uh \left( t \right)\right\|_{\cFHs}^2 + \nu \int_0^t \left\| \uh\left( \tau \right) \right\|_{H^{s+1}\left( \T^3 \right)}^2d\tau \\
	\leqslant C \left\| \uh _0\right\|_{\cFHs}^2 \exp\left\{\int_0^t \left\| \uh \left( \tau \right) \right\|_{{\cFLinfty}}^2  +\left\|\underline{u}^h\left( \tau \right) \right\|_{H^{s } \left( \T^1_{\v} \right)}^2 + \left\|  \underline{u}^h\left( \tau \right) \right\|_{H^{s +1} \left( \T^1_{\v} \right)}^2 d\tau\right\}.
\end{multline*}
Using Proposition \ref{prop:Linfty_integrability_uh} and Proposition \ref{pr:global_Hs_Uunder}, we finaly obtain
\begin{multline*}
	\left\| \uh \left( t \right)\right\|_{\cFHs}^2 + \nu \int_0^t \left\| \uh\left( \tau \right) \right\|_{H^{s+1}\left( \T^3 \right)}^2d\tau\\
	\leqslant C \left\| \uh _0\right\|_{\cFHs}^2 \exp\set{\frac{C  {K}}{c\nu} \; \Phi \left( U_0 \right) \left\| \nh \uh_0 \right\|_{L^p_v \left( H^\sigma_h \right)} + \frac{C}{\nu} \left\| \underline{u}^h_0 \right\| _{H^s\left( \T^1_{\v} \right)} },
\end{multline*}
where $ \Phi $ is defined as in Proposition \ref{pr:global_Hs_ubarh}. \hfill $ \Box $

\subsection{Proof of Proposition \ref{pr:global_Hs_uosc}}

We first remark that, if $ \tU $ and $ \uU $ are smooth enough, the system \eqref{eq:lim_Uosc} admits global weak solutions \textit{\`a la Leray} in the same fashion as for the incompressible \NS\ equations (see \cite{monographrotating} for instance).
\begin{lemma} \label{lem:Ler_sol_Uosc}
	Let $s > 1/2$, $\bU \in L^2\pare{\R_+; H^{s+1}\pare{\T^3}}$ and $\uU \in L^\infty\pare{\R_+; H^{s}\pare{\T^1_{\v}}}$. Then, for any initial data $U_{\osc, 0}\in\2$, there exists a global weak solution of the system \eqref{eq:lim_Uosc} such that
	\begin{equation*}
		U_{\osc}\in \cC_{\loc}\pare{\R_+; \2}\cap L^2_{\loc}\pare{\R_+; H^1\pare{\T^3}}.
	\end{equation*}
	Moreover for any $t^\star \in \R_+$ and for any $ 0\leqslant t \leqslant t^\star < \infty $, the following estimate holds true
	\begin{equation} \label{eq:L2_bound_Uosc}
		\norm{U_{\osc}\pare{t}}_{\2}^2 + \nu \int_0^t \norm{ \nabla U_{\osc}\pare{\tau}}_{\2}^2 d\tau \leqslant \cE_{2, U_0} \pare{t^\star}.
	\end{equation}
	where 
	\begin{equation*}
		\cE_{2, U_0} \pare{t^\star} = C \norm{U_{\osc, 0}}_{\2}^2 \exp \set{\frac{\cE_1\pare{U_0}}{\nu} + t^\star \norm{\uU_0}_{H^s\pare{\T^1_{\v}}}^2 }.
	\end{equation*}
\end{lemma}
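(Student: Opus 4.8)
The plan is to build $U_{\osc}$ by a Friedrichs-type frequency-truncation scheme, to close a uniform $\2$ energy estimate exploiting the divergence-free structure of the nonlinearities together with the parabolic term $-\nu\Delta$, and to pass to the limit by compactness exactly as in the Leray theory of the three-dimensional Navier--Stokes equations; throughout, $\bU$ and $\uU$ are treated as given coefficients obeying the stated bounds. Concretely I would introduce the spectral projector $J_N$ onto the Fourier modes $\av{\check n}\leqslant N$; since $J_N$ leaves $\mathrm{Span}\set{E_\pm(n,\cdot)}$ invariant, the truncated problem $\partial_t U_{\osc}^N + J_N\widetilde{\cQ}_1\pare{U_{\osc}^N+2\bU,U_{\osc}^N} + J_N\cB\pare{\uU,U_{\osc}^N} - \nu\Delta U_{\osc}^N = 0$ with datum $J_N U_{\osc,0}$ is a locally solvable ODE on the finite-dimensional space $J_N\widetilde{L^2_\sigma}$, which the a~priori bound below renders global.

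The core is the energy identity obtained by testing the truncated equation against $U_{\osc}^N$:
\begin{multline*}
	\tfrac12\tfrac{d}{dt}\norm{U_{\osc}^N}_{\2}^2 + \nu\norm{\nabla U_{\osc}^N}_{\2}^2 \\
	= -\ps{\widetilde{\cQ}_1\pare{U_{\osc}^N,U_{\osc}^N}}{U_{\osc}^N}_{\2} - 2\ps{\widetilde{\cQ}_1\pare{\bU,U_{\osc}^N}}{U_{\osc}^N}_{\2} - \ps{\cB\pare{\uU,U_{\osc}^N}}{U_{\osc}^N}_{\2}.
\end{multline*}
The self-interaction term vanishes: $\widetilde{\cQ}_1$ is the resonant restriction of the divergence-free transport form $\bP\pare{u\cdot\nabla u}$, and the resonance set $\set{\omega^{a,b,c}_{k,m,n}=0}$ is stable under the symmetrisation that yields the cancellation $\ps{\bP\pare{u\cdot\nabla u}}{u}_{\2}=0$, so that cancellation survives the filtering — this can be read off directly from \eqref{eq:limit_cQ1}. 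For the cross term I write $\widetilde{\cQ}_1\pare{\bU,U_{\osc}^N}=\tfrac12\chi(D)\pare{\bU\cdot\nabla U_{\osc}^N + U_{\osc}^N\cdot\nabla\bU}$ (Remark \ref{rem:smoothness_bilinear_Fourier}) and, using the $\2$-boundedness of $\chi(D)$, bound it by $C\norm{\bU}_{\cFLinfty}\norm{\nabla U_{\osc}^N}_{\2}\norm{U_{\osc}^N}_{\2}$, absorbing $\tfrac\nu4\norm{\nabla U_{\osc}^N}_{\2}^2$ via Young's inequality. For $\ps{\cB\pare{\uU,U_{\osc}^N}}{U_{\osc}^N}_{\2}$ I use $\uU=\uU(x_3)$, $\uU^3\equiv0$ and the explicit expression \eqref{eq:tQ2_e+-}: the genuine horizontal-transport component $\uu\cdot\nh U_{\osc}^N$ contributes nothing because $\diveh\uu=0$ and $\uu$ is independent of $x_h$, while the remaining pieces are controlled by the product of $\norm{U_{\osc}^N}_{\2}$, a norm of $\uU$ and at most $\norm{\nabla U_{\osc}^N}_{\2}$, whose top-order factor is again absorbed. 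Collecting these bounds and applying Gronwall's lemma yields
\begin{multline*}
	\norm{U_{\osc}^N(t)}_{\2}^2 + \nu\int_0^t\norm{\nabla U_{\osc}^N(\tau)}_{\2}^2\,d\tau \\
	\leqslant C\norm{U_{\osc,0}}_{\2}^2\exp\set{\frac C\nu\int_0^t\norm{\bU(\tau)}_{\cFLinfty}^2\,d\tau + C\int_0^t\norm{\uU(\tau)}_{H^s\pare{\T^1_{\v}}}^2\,d\tau},
\end{multline*}
and, since $\bU\in L^2\pare{\R_+;H^{s+1}}\hookrightarrow L^2\pare{\R_+;\cFLinfty}$ with $\int_0^\infty\norm{\bU}_{\cFLinfty}^2\lesssim\cE_1(U_0)/\nu$ (from \eqref{eq:stong_Hs_bound_ubar} and $H^{s+1}\hookrightarrow\cFLinfty$, valid for $s>1/2$) while $\uU\in L^\infty\pare{\R_+;H^s\pare{\T^1_{\v}}}$, this is precisely \eqref{eq:L2_bound_Uosc} with $\cE_{2,U_0}$ as in \eqref{eq:E2}, uniformly in $N$.

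The uniform bound then confines $\pare{U_{\osc}^N}_N$ to a bounded subset of $L^\infty_{\loc}\pare{\R_+;\2}\cap L^2_{\loc}\pare{\R_+;H^1\pare{\T^3}}$, and the equation together with the $\2$-boundedness of $\chi(D)$ bounds $\pare{\partial_t U_{\osc}^N}_N$ in $L^{4/3}_{\loc}\pare{\R_+;H^{-3/2}\pare{\T^3}}$; Aubin--Lions yields a subsequence converging strongly in $L^2_{\loc}\pare{\R_+;\2}$, which suffices to pass to the limit in the quadratic term $\widetilde{\cQ}_1$ and in the bilinear term $\cB$ and to identify the limit as a weak solution of \eqref{eq:lim_Uosc}; estimate \eqref{eq:L2_bound_Uosc} passes by weak lower semicontinuity, and the time-continuity $U_{\osc}\in\cC_{\loc}\pare{\R_+;\2}$ follows by the usual interpolation argument for this class of parabolic systems. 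I expect the genuine obstacle to be the term $\ps{\cB\pare{\uU,U_{\osc}}}{U_{\osc}}_{\2}$: since $\cB$ is nonlocal in the vertical variable (note the reflection $n_3\mapsto -n_3$ in \eqref{eq:tQ2_e+-}) there is no evident transport-type cancellation beyond the horizontal one, and one must check carefully that the top-order derivative always lands on a factor absorbable into $\nu\norm{\nabla U_{\osc}}_{\2}^2$ or pairable against $\norm{\uU}_{L^\infty\pare{\T^1_{\v}}}$ alone, so that the Gronwall exponent stays controlled by $\norm{\uU}_{H^s\pare{\T^1_{\v}}}^2$ exactly as in \eqref{eq:E2}.
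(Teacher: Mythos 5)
Your proposal is correct and follows essentially the same route as the paper: Friedrichs truncation $J_N$ preserving the oscillating subspace, the energy identity with $\ps{J_N\widetilde{\cQ}_1(U_{\osc}^N,U_{\osc}^N)}{U_{\osc}^N}_{\2}=0$ by the divergence-free structure, direct $\2$ bounds on the two linear-in-$U_{\osc}$ terms absorbable into $\nu\norm{\nabla U_{\osc}^N}_{\2}^2$, Gronwall using \eqref{eq:stong_Hs_bound_ubar} and $\uU\in L^\infty(\R_+;H^s(\T^1_{\v}))$, and Aubin--Lions for the limit. One small caution, which does not affect the outcome: your claimed cancellation of a ``horizontal-transport component'' $\uu\cdot\nh U_{\osc}^N$ inside $\cB$ is not justified, because the classical identity $\int(\uu\cdot\nh f)\cdot f\,dx=0$ is a cancellation over \emph{all} vertical frequencies and does not survive the resonant restriction $k_3=2n_3$, $m_3=-n_3$ built into \eqref{eq:tQ2_e+-}; the paper makes no such claim and simply bounds the whole of $\cB$ by $C\norm{\uU}_{H^s(\T^1_{\v})}\norm{U_{\osc}^N}_{\2}\norm{\nabla U_{\osc}^N}_{\2}$, which is exactly what you need anyway and what your fallback bound also yields.
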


\begin{proof}

We define the frequency cut-off operator
\begin{equation*}
	J_n W = \sum_{\av{k}\leqslant n} \widehat{W}(n) e^{i \cn\cdot x}, 
\end{equation*}
and consider the approximate system
\begin{equation} \label{eq:approx_osc_system}
	\left\lbrace
	\begin{aligned}
		&\partial_t U_{\osc, n} +J_n \widetilde{\mathcal{Q}}_1\left(U_{\osc, n} +2 \overline{U}, U_{\osc, n} \right) +J_n \mathcal{B} \left( \uU, U_{\osc, n} \right) -  \nu \Delta U_{\osc, n} =0,\\
		& \dive U_{\osc, n}=0, \\
		&\left. U_{\osc, n} \right|_{t=0}=J_n U_{{\osc}, 0}.
	\end{aligned}
	\right.
\end{equation}
The Cauchy-Lipschitz theorem implies the existence of a local solution for \eqref{eq:approx_osc_system} in the space
\begin{equation*}
	U_{\osc, n} \in \cC\pare{\left[0, T_n\right]; L^2_n},
\end{equation*}
where
\begin{equation*}
	L^2_n = \set{f \in L^2(\T^3), \text{supp } \widehat{f} \subset \cB(0,n)}.
\end{equation*}
Since $ U_{\osc, n} $ is of divergence-free we deduce that
\begin{equation*}
\ps{J_n \widetilde{\cQ}_1 \pare{U_{\osc, n}, U_{\osc, n}}}{U_{\osc, n}}_{\2}=0.
\end{equation*}
Moreover, the following inequalities hold true thanks to the embedding $ H^s\hra L^\infty, \ s>\frac{d}{2} $;
\begin{align*}
	\ps{\widetilde{\cQ}_1 \pare{\bU, U_{\osc,n}}}{U_{\osc, n}}& \leqslant C \norm{\nabla \bU}_{\Hs} \norm{U_{\osc, n}}_{\2}\norm{\nabla U_{\osc, n}}_{\2},\\
	\ps{\cB \pare{\uU, U_{\osc,n}}}{U_{\osc, n}}& \leqslant C \norm{\uU}_{H^s\pare{\T^1_{\v}}} \norm{U_{\osc, n}}_{\2}\norm{\nabla U_{\osc, n}}_{\2},
\end{align*} 
which yield, for any $ t^\star \in \R_+ $ and $ t \in [0, t^\star[ $,
\begin{align*}
	&\norm{U_{\osc, n}\pare{t}}_{\2}^2 + \nu \int_0^t \norm{ \nabla U_{\osc, n}\pare{\tau}}_{\2}^2 d\tau\\
	&\hspace{3cm} \leqslant C \norm{U_{\osc, 0}}_{\2}^2 \exp \set{ \int_0^t \norm{\nabla \bU\pare{\tau}}_{\Hs}^2 + \norm{\uU\pare{\tau}}_{H^s\pare{\T^1_{\v}}}^2 d\tau }, \\
	&\hspace{3cm} \leqslant C \norm{U_{\osc, 0}}_{\2}^2 \exp \set{ \frac{\cE_1\pare{U_0}}{\nu} + t^\star \norm{\uU_0}_{H^s\pare{\T^1_{\v}}}^2 },
\end{align*}
where $ \mathcal{E}_1 $ is defined in \eqref{eq:E1}. Hence, by a continuation argument, we deduce that $ T_n=\infty $ and for each $ T>0 $, the sequence $ \pare{U_{\osc, n}}_n $ is uniformly bounded in the space 
\begin{equation*}
\cC\pare{[0, T]; \2}\cap L^2 \pare{[0, T]; H^1\pare{\T^3}}.
\end{equation*}
Standard product rules in Sobolev spaces show that the sequence $ \pare{\partial_t U_{\osc, n}}_n $ is uniformly bounded in the space $ L^2 \pare{[0, T]; H^{-N}} $ for $ N \in \mathbb{N} $ large enough. Finaly, applying Aubin-Lions lemma (see \cite{Aubin63}), we deduce that the sequence $ \pare{U_{\osc, n}}_n $ is compact in $ L^2 \pare{[0, T]; L^2} $, and each limit point of $ \pare{U_{\osc, n}}_n $ weakly solves \eqref{eq:lim_Uosc}.

\end{proof}

\begin{rem}
	We point out that the above construction of global weak solutions is possible thanks to the presence of the uniformly parabolic smoothing effect on the limit system \eqref{eq:lim_Uosc}, hence the importance of the propagation of parabolicity mentioned in Remark \ref{rem:propagation_parabolicity}. 
\end{rem}

Next, we study the ``purely bilinear'' interactions of highly oscillating perturbations in \eqref{eq:lim_syst_Utilde} given by the term
\begin{equation*}
	\pare{\widetilde{\cQ}_1 \pare{U_{\osc}, U_{\osc}}}_{\osc}. 
\end{equation*}
Bilinear interactions of the above form, in general, prevent us from obtaining global-in-time energy subcritical and critical estimates. However, as pointed out in Remark \ref{rem:smoothness_bilinear_Fourier}, we can actually prove that the bilinear interaction $ \widetilde{\cQ}_1 \pare{U_{\osc}, U_{\osc}} $ is in fact smoother than the vector $ U_{\osc}\cdot\nabla U_{\osc} $. To do so, we introduce the following \textit{resonant set}. 
\begin{definition} \label{resonance set}

	\begin{enumerate}
		\item The resonant set $\mathcal{K}^\star$ is the set of frequencies such that
		\begin{align*}
			\mathcal{K}^\star &= \left\lbrace \left( k,m,n \right)\in \mathbb{Z}^9, k_h, m_h, n_h\neq 0 \left|\hspace{3mm} \omega^a(k)+ \omega^b(m)=\omega^c(n) , \ k+m=n, \hspace{3mm} \left( a,b,c\right) \in \left\lbrace -,+ \right\rbrace  \right.\right\rbrace,\\
			&= \left\lbrace \left( k,n \right)\in \mathbb{Z}^6, k_h,  n_h\neq 0 \left|\hspace{3mm} \omega^a(k)+ \omega^b(n-k)=\omega^c(n), \hspace{3mm} \left( a,b,c\right) \in \left\lbrace -,+ \right\rbrace  \right.\right\rbrace, 
		\end{align*}
		where $\omega^j, \ j=\pm$ are the eigenvalues given in \eqref{eq:eigenvalues}. 

		\item The \textit{resonant set of the frequency}  $n: n_h\neq 0$, is defined as
		\begin{equation*}
			\mathcal{K}^\star_n = \left\lbrace \left( k,m \right)\in \mathbb{Z}^6 \left|\hspace{3mm} \omega^a(k)+ \omega^b(m)=\omega^c(n) \text{ with } k+m=n, \hspace{3mm} \left( a,b,c\right) \in \left\lbrace -,+ \right\rbrace 			 \right.\right\rbrace.
		\end{equation*}
	\end{enumerate}

\end{definition}

\noindent The resonant set is introduced in order to express the term $ \pare{\widetilde{\cQ}_1 \pare{U_{\osc}, U_{\osc}}}_{\osc} $ in a more concise way. Indeed, considering the explicit definition of the bilinear form $ \widetilde{\cQ}_1 $ given in \eqref{eq:limit_cQ1} we can immediately deduce that
\begin{equation*}
	\pare{\widetilde{\cQ}_1 \pare{U_{\osc}, U_{\osc}}}_{\osc} = \cF^{-1}\pare{1_{\mathcal{K}^\star} \cF \pare{U_{\osc}\cdot \nabla U_{\osc}}}. 
\end{equation*}
In other words, the resonant set $ \mathcal{K}^\star $ is the set of frequencies on which the bilinear interaction $ \pare{\widetilde{\cQ}_1 \pare{U_{\osc}, U_{\osc}}}_{\osc} $ is localized.

We now define the following Fourier multiplier of order zero
\begin{equation*}
	\chi_{\mathcal{K}^\star}\pare{D} \pare{ a \ b} = \cF^{-1}\pare{1_{\mathcal{K}^\star} \cF \pare{a \ b}}.
\end{equation*}
We can hence rewrite
\begin{equation*} 
	\pare{\widetilde{\cQ}_1 \pare{U_{\osc}, U_{\osc}}}_{\osc} = \dive \left[ \chi_{\mathcal{K}^\star}\pare{D} \pare{U_{\osc}\otimes U_{\osc}}\right].
\end{equation*}
We state the following technical lemma which is a simple variation of \cite[Lemma 6.6, p.150]{monographrotating},  \cite[Lemma 6.4, p.222]{paicu_rotating_fluids} or \cite[Lemma 8.4]{Scrobo_primitive_horizontal_viscosity_periodic}. The proof is based on the fact that, for fixed $ \left( k_h, n \right) $, the fiber 
\begin{equation*} 
	\mathcal{J}\left(  k_h, n \right)= \left\{ k_3 \in \bZ \,, \left( k,n \right)\in \mathcal{K}^\star \right\}
\end{equation*} 
is a finite set.
\begin{lemma} \label{lem:product_rule_osc}
	Let $a,b \in H^{1/2}\left(\mathbb{T}^3\right)$ and $c\in \2$ be vector fields of zero horizontal average on $\mathbb{T}^2_{\textnormal{h}}$.  Then there exists a constant $C$ which only depends on $a_1/a_2$ such that
	\begin{equation} \label{eq:product_rule_osc}
		\left| \sum_{(k,n)\in \mathcal{K}^\star } \widehat{a}(k) \widehat{b}\left( {n-k} \right) \widehat{c}(n) \right| \leqslant \frac{C}{a_3} \left\| a\right\|_{H^{1/2}\left(\mathbb{T}^3\right)} \left\|b \right\|_{H^{1/2}\left(\mathbb{T}^3\right)} \left\|c\right\|_\2. 
	\end{equation}
\end{lemma}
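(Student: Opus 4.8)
The plan is to peel off the factor $\widehat c(n)$ by Cauchy--Schwarz and then estimate the resulting bilinear expression in $a$ and $b$, using the resonance constraint defining $\mathcal{K}^\star$ to carry out the summation in the \emph{vertical} frequencies. Concretely, I would first establish the uniform finite‑fiber property announced before the statement: there is an integer $N_0$, depending only on $a_1/a_2$ (in fact absolute), such that for every $\pare{k_h,n}$ with $k_h,\,n_h-k_h,\,n_h\neq 0$ one has $\#\mathcal{J}\pare{k_h,n}\leqslant N_0$, together with the analogous bound for the fibers obtained by permuting the roles of $k$, $m=n-k$ and $n$ (so that every vertical summation occurring below ranges over at most $N_0$ values once the relevant horizontal frequencies are fixed).

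To prove the finite‑fiber bound, fix $\pare{k_h,n}$ and regard the resonance relation $\omega^a(k)+\omega^b(n-k)=\omega^c(n)$ as an equation for the single unknown $k_3$, with $m_3=n_3-k_3$. Since $n$ is frozen, $\omega^c(n)=\pm\lvert\check{n}_h\rvert/\lvert\check{n}\rvert$ is a constant $\gamma_n\neq 0$, and the left‑hand side contains only the two radicals $\pm\lvert\check{k}_h\rvert/\sqrt{\lvert\check{k}_h\rvert^{2}+\check{k}_3^{2}}$ and $\pm\lvert\check{m}_h\rvert/\sqrt{\lvert\check{m}_h\rvert^{2}+\check{m}_3^{2}}$. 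Isolating these radicals and squaring twice, then clearing all denominators (including the factor $\lvert\check{n}_h\rvert^{2}+\check{n}_3^{2}$ hidden in $\gamma_n^{2}$), yields a polynomial identity $P(k_3)=0$ of degree bounded by an absolute constant (one checks $\deg P\leqslant 8$), with integer coefficients built from $\lvert\check{k}_h\rvert^{2}$, $\lvert\check{n}_h-\check{k}_h\rvert^{2}$, $\lvert\check{n}_h\rvert^{2}$, $\check{n}_3$ and $a_3$. The decisive point is that $P$ is \emph{not} the zero polynomial: a direct computation of its top‑degree coefficient shows it equals, up to a positive numerical factor, a power of $\lvert\check{n}_h\rvert^{2}$ (equivalently of $\gamma_n$), which is nonzero precisely because $n_h\neq 0$ — and this is exactly the place where the hypotheses $k_h,\,n_h-k_h,\,n_h\neq 0$ are used. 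Hence $\#\mathcal{J}\pare{k_h,n}\leqslant \deg P=:N_0$, uniformly in the frequencies and in the torus.

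With this combinatorial input secured, the remaining step is a Cauchy--Schwarz estimate carried out at the endpoint regularity $s=\tfrac12$. Splitting off $\widehat c(n)$ against a positive weight $\theta$ built from the horizontal frequencies of $k$ and $n-k$, one writes
\begin{equation*}
\Big\lvert\sum_{(k,n)\in\mathcal{K}^\star}\widehat a(k)\,\widehat b(n-k)\,\widehat c(n)\Big\rvert\leqslant\Big(\sum_{(k,n)\in\mathcal{K}^\star}\lvert\widehat a(k)\rvert^{2}\lvert\widehat b(n-k)\rvert^{2}\,\theta(k,n-k)\Big)^{1/2}\Big(\sum_{(k,n)\in\mathcal{K}^\star}\frac{\lvert\widehat c(n)\rvert^{2}}{\theta(k,n-k)}\Big)^{1/2},
\end{equation*}
and then distributes the half‑derivative coming from $a,b\in H^{1/2}\pare{\T^3}$ between the two factors. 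The finite‑fiber property collapses each vertical summation to at most $N_0$ terms, which are handled by Cauchy--Schwarz, while the surviving sums are two‑dimensional horizontal convolutions estimated by Young's inequality and the sharp embedding $H^{1/2}\pare{\T^2_h}\hra L^{4}\pare{\T^2_h}$; the constants $a_1,a_2$ enter through this last embedding and $a_3$ through the rescaling $\check{p}_3=p_3/a_3$ in the vertical sums, producing the factor $C/a_3$. I expect the genuine obstacle to be precisely this last bookkeeping: because $H^{1/2}$ is the endpoint for the estimate there is no room to spare, so $\theta$ must be chosen (and the finite fibers exploited) so that the $n$‑dependent vertical fibers are never collapsed in a way that destroys the localization in $n_3$ needed to absorb $\lVert c\rVert_{\2}$ — this is where one must follow carefully the arguments of \cite[Lemma 6.6]{monographrotating}, \cite[Lemma 6.4]{paicu_rotating_fluids} and \cite[Lemma 8.4]{Scrobo_primitive_horizontal_viscosity_periodic}. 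The uniform fiber bound $N_0$ above is the only computation special to the present stratification operator; once it is granted, the conclusion follows from the standard, if delicate, harmonic‑analytic scheme just described.
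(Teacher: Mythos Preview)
Your approach is the paper's: both rest on the finite-fiber bound $\#\mathcal{J}(k_h,n)\leqslant 8$, obtained exactly as you describe by squaring out the two radicals to a degree-eight polynomial in $k_3$ whose leading coefficient is a nonzero multiple of a power of $\lvert\check n_h\rvert$, followed by Cauchy--Schwarz and the two-dimensional product rule $H^{1/2}\cdot H^{1/2}\hookrightarrow L^2$ on $\T^2_h$ (equivalently, your embedding $H^{1/2}(\T^2_h)\hookrightarrow L^4$). The paper's execution of the harmonic-analytic step is, however, more direct than your weighted scheme and avoids precisely the bookkeeping you flag as the obstacle: rather than introducing an auxiliary weight $\theta$, it applies Cauchy--Schwarz \emph{iteratively}---first in $k_3$ over the finite fiber (producing a factor $\sqrt{8}$), then in $n_3$---which immediately reduces the trilinear sum to $\langle\widetilde a\,\widetilde b\mid\widetilde c\rangle_{L^2(\T^2_h)}$ with $\widetilde f_{n_h}=(\sum_{n_3}\lvert\widehat f_n\rvert^2)^{1/2}$, and the two-dimensional product rule finishes. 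No weight is needed, and the $n_3$-localization required to recover $\lVert c\rVert_{\2}$ is preserved automatically by the order in which the two Cauchy--Schwarz inequalities are applied; the lift to a generic torus is then a one-line rescaling, which accounts for the factor $C/a_3$.
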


\begin{proof}

We first prove Lemma \ref{lem:product_rule_osc} when $\T^3=\left[0,2\pi\right)^3$. We write
\begin{align}
	\label{res ineq 1} I_{\mathcal{K}^\star}= \left| \sum_{(k,n)\in \mathcal{K}^\star } \widehat{a}_k \widehat{b}_{n-k} \widehat{c}_n \right|\leqslant & \sum_{\left( k_h,n\right)\in \mathbb{Z}^2\times \mathbb{Z}^3} \sum_{\left\lbrace k_3:(k,n)\in \mathcal{K}^\star\right\rbrace } \left| \widehat{a}_k \widehat{b}_{n-k} \widehat{c}_n \right|,\\
	\leqslant & \sum_{\left( k_h,n\right)\in \mathbb{Z}^2\times \mathbb{Z}^3} \left| \widehat{c}_n\right| \sum_{\left\lbrace k_3:(k,n)\in \mathcal{K}^\star\right\rbrace } 
	\left|\widehat{a}_k\right|\left| \widehat{b}_{n-k}\right|.\notag
\end{align}

By Cauchy-Schwarz inequality, we have
\begin{equation*}
	\sum_{\left\lbrace k_3:(k,n)\in \mathcal{K}^\star\right\rbrace } 
	\left|\widehat{a}_k\right|\left| \widehat{b}_{n-k}\right|\leqslant
	\left( \sum_{\left\lbrace k_3:(k,n)\in \mathcal{K}^\star\right\rbrace } 
	\left|\widehat{a}_k\right|^2\left| \widehat{b}_{n-k}\right|^2\right)^{1/2} 
	\left( \sum_{\left\lbrace k_3:(k,n)\in \mathcal{K}^\star\right\rbrace }  1\right)^{1/2}.
\end{equation*}
Now, fixing $\left( k_h,n\right)\in \mathbb{Z}^2\times \mathbb{Z}^3$ there exists only a finite number of resonant modes $k_3$, more precisely, 
\begin{equation}
	\label{eq:nok3} \# \left(\left\lbrace k_3:(k,n)\in \mathcal{K}^\star\right\rbrace\right) \leqslant 8.
\end{equation}
Indeed, we can write explicitly the resonant condition $ \omega^{+,+,+}_{k, n-k,n}=0 $ (the same procedure holds for the generic case $ \omega^{a,b,c}_{k, n-k,n}=0, a,b,c\neq 0 $) as follows
\begin{equation*}
	\left(\frac{\left|k_h\right|^2}{\left|k_3\right|^2+\left|k_h\right|^2}\right)^{1/2}
	+\left(\frac{ \left| n_h-k_h\right|^2}{\left| n_3-k_3\right|^2+\left|n_h-k_h\right|^2}\right)^{1/2}
	= \left(\frac{\left\vert n_h\right|^2}{\left|n_3\right|^2+\left| n_h\right|^2}\right)^{1/2}.
\end{equation*}
After some algebraic calculations, the above equation of $k_3$ ($k_h$ and $n$ being fixed) becomes an polynomial equation of the form
\begin{equation*}
	R\left( k_3 \right)=0,
\end{equation*}
where $R$ is a real polynomial of degree eight, hence \eqref{eq:nok3} follows the fundamental theorem of algebra. Thus,
\begin{equation*}
	\sum_{\left\lbrace k_3:(k,n)\in \mathcal{K}^\star\right\rbrace } 	\left|\widehat{a}_k\right|\left| \widehat{b}_{n-k}\right|\leqslant 	\sqrt{8}\left( \sum_{\left\lbrace k_3:(k,n)\in \mathcal{K}^\star\right\rbrace } 	\left|\widehat{a}_k\right|^2\left| \widehat{b}_{n-k}\right|^2\right)^{1/2},
\end{equation*}
which, combined with Inequality \eqref{res ineq 1}, gives
\begin{equation*}
	I_{\mathcal{K}^\star} \leqslant \sqrt{8}  \sum_{k_h,n_h} \sum_{n_3 } \left| \widehat{c}_n\right| \left( \sum_{k_3 } \left|\widehat{a}_k\right|^2\left| \widehat{b}_{n-k}\right|^2\right)^{1/2}.
\end{equation*}
Moreover
\begin{equation*}
	\sum_{n_3 } \left| \widehat{c}_n\right| \left( \sum_{k_3 } 	\left|\widehat{a}_k\right|^2\left| \widehat{b}_{n-k}\right|^2\right)^{1/2} \leqslant \left( \sum_{n_3 } \left| \widehat{c}_n\right|^2 \right)^{1/2} \left(\sum_{n_3, k_3 } \left|\widehat{a}_k\right|^2\left| \widehat{b}_{n-k}\right|^2\right)^{1/2},
\end{equation*}
and hence
\begin{equation} \label{res ineq 2}
	I_{\mathcal{K}^\star} \leqslant \sqrt{8} \sum_{\left( k_h,n\right)\in \mathbb{Z}^2\times \mathbb{Z}^3} \left( \sum_{n_3 } \left| \widehat{c}_n\right|^2 \right)^{1/2} \left( \sum_{p_3}\left| \widehat{b}_{n_h-k_h,p_3}\right|^2\right)^{1/2} \left( \sum_{k_3} \left| \widehat{a}_k\right|^2\right)^{1/2}.
\end{equation}

Let us denote at this point
\begin{equation*}
	\widetilde{a}_{n_h} = \left( \sum_{n_3} \left| \widehat{a}_n\right|^2\right)^{1/2}, \hspace{2cm} 
	\widetilde{b}_{n_h} = \left( \sum_{n_3} \left| \widehat{b}_n\right|^2\right)^{1/2}, \hspace{2cm} 
	\widetilde{c}_{n_h} = \left( \sum_{n_3} \left| \widehat{c}_n\right|^2\right)^{1/2},
\end{equation*}
and the following distributions
\begin{equation*}
	\widetilde{a}\left( x_h\right) = \mathcal{F}_h^{-1}\left(\widetilde{a}_{n_h} \right) \hspace{2cm}
	\widetilde{b}\left( x_h\right) = \mathcal{F}_h^{-1}\left(\widetilde{b}_{n_h} \right) \hspace{2cm}
	\widetilde{c}\left( x_h\right) = \mathcal{F}_h^{-1}\left(\widetilde{c}_{n_h} \right). 
\end{equation*}
The inequality \eqref{res ineq 2} can be read, applying Plancherel theorem and the product rules for Sobolev spaces, as
\begin{align*}
	I_{\mathcal{K}^\star}\leqslant \psca{\widetilde{a}\widetilde{b} \,\big|\, \widetilde{c}}_{L^2\left(\mathbb{T}^2_{\textnormal{h}}\right)} \leqslant & \left\| \widetilde{a}\widetilde{b} \right\|_{L^2\left(\mathbb{T}^2_{\textnormal{h}}\right)}\left\|\widetilde{c}\right\|_{L^2\left(\mathbb{T}^2_{\textnormal{h}}\right)}\\
	\leqslant & \left\| \widetilde{a}\right\|_{H^{1/2}\left(\mathbb{T}^2_{\textnormal{h}}\right)}\left\|\widetilde{b} \right\|_{H^{1/2}\left(\mathbb{T}^2_{\textnormal{h}}\right)}\left\|\widetilde{c}\right\|_{L^2\left(\mathbb{T}^2_{\textnormal{h}}\right)}\\
	=& \left\| a\right\|_{H^{1/2,0}\left(\mathbb{T}^3\right)} \left\|b \right\|_{H^{1/2,0}\left(\mathbb{T}^3\right)} \left\|c\right\|_{\2}, \\
	\leqslant & \left\| a\right\|_{H^{1/2}\left(\mathbb{T}^3\right)} \left\|b \right\|_{H^{1/2}\left(\mathbb{T}^3\right)} \left\|c\right\|_{\2}.
\end{align*}

Finaly, to lift this argument to a generic torus $\prod_{i=1}^3 \left[ 0, 2\pi a_i\right)$, it suffices to use the transform 
$$
\widetilde{v}\left( x_1, x_2, x_3\right) = v\left( a_1x_1, a_2 x_2, a_3 x_3\right),
$$
and the identity
$$
\left\| \widetilde{v}\right\|_{L^2\left( \left[ 0,2\pi\right)^3\right)}= \left( a_1a_2a_3\right)^{-1/2} \left\| v\right\|_{L^2\left( \prod_{i=1}^3\left[ 0, 2\pi a_i\right)\right)}.
$$

\end{proof}

\begin{rem}
	Lemma \eqref{lem:product_rule_osc} can be applied on $ U_{\osc} $, by taking $ a=b=c=U_{\osc} $,  since the projection on the oscillating subspace defined in \eqref{eq:DecompW} has zero horizontal average. 
\end{rem}

Now, we can prove the energy bound required on the problematic trilinear term
\begin{lemma}
	Let $ s>0 $, then
	\begin{equation}\label{eq:estimate_trilinear_term_osc}
		\ps{\pare{\widetilde{\cQ}_1 \pare{U_{\osc}, U_{\osc}}}_{\osc}}{U_{\osc}}_{\Hs} \leqslant C \norm{U_{\osc}}_{\2}^{1/2} \norm{\nabla U_{\osc}}_{\2}^{1/2}\norm{U_{\osc}}_{\Hs}^{1/2} \norm{\nabla U_{\osc}}_{\Hs}^{3/2}. 
	\end{equation}
\end{lemma}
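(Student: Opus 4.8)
The plan is to combine the identity $\pare{\widetilde{\cQ}_1 \pare{U_{\osc}, U_{\osc}}}_{\osc} = \dive \left[ \chi_{\mathcal{K}^\star}\pare{D} \pare{U_{\osc}\otimes U_{\osc}}\right]$ with the resonant product estimate of Lemma~\ref{lem:product_rule_osc}, which is the only ingredient carrying the gain coming from the finiteness of the resonant fibers. Write $U:=U_{\osc}$ and $\Lambda := (-\Delta)^{1/2}$; since $U$ has zero horizontal (hence global) average, the homogeneous norms $\norm{\Lambda^{\sigma}U}_{\2}$ are equivalent on $\T^3$ to the inhomogeneous ones, and $\ps{\cdot}{\cdot}_{\Hs}$ is equivalent to $\ps{\Lambda^{s}\cdot}{\Lambda^{s}\cdot}_{\2}$. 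Expanding in Fourier series and using that $\dive$ contributes a factor $\cn$, one gets
\begin{equation*}
	\left| \ps{\pare{\widetilde{\cQ}_1 \pare{U, U}}_{\osc}}{U}_{\Hs} \right| \leqslant C \sum_{(k,n)\in\mathcal{K}^\star} \av{\cn}^{2s+1}\,\av{\widehat{U}(k)}\,\av{\widehat{U}(n-k)}\,\av{\widehat{U}(n)}.
\end{equation*}
The set $\mathcal{K}^\star$ is symmetric under $k\mapsto n-k$ (this only permutes the indices $a,b\in\set{\pm}$ over which one sums), so up to a factor $2$ the sum may be restricted to $\av{\cn-\ck}\geqslant\av{\ck}$; writing $m:=n-k$, one then has $k+m=n$ and $\av{\cn}\leqslant\av{\ck}+\av{\cm}\leqslant2\av{\cm}$.

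The next step is to split the sum according to the relative sizes of the frequencies. In the \emph{low--high regime} $\av{\cn}\geqslant\tfrac12\av{\cm}$ (so $\av{\cn}\sim\av{\cm}$) I would bound $\av{\cn}^{2s+1}\leqslant C\,\av{\cm}^{s+1/4}\av{\cn}^{s+3/4}$ and apply Lemma~\ref{lem:product_rule_osc}, with the absolute values of the Fourier coefficients and the sum restricted to this regime, to $a=U$, $b=\Lambda^{s+1/4}U$, $c=\Lambda^{s+3/4}U$ — all of zero horizontal average, and admissible since $s+3/4\leqslant s+1$. This produces a bound by $C\,\norm{\Lambda^{1/2}U}_{\2}\norm{\Lambda^{s+3/4}U}_{\2}^{2}$, and the interpolation inequalities $\norm{\Lambda^{1/2}U}_{\2}\leqslant\norm{U}_{\2}^{1/2}\norm{\nabla U}_{\2}^{1/2}$ and $\norm{\Lambda^{s+3/4}U}_{\2}\leqslant\norm{U}_{\Hs}^{1/4}\norm{\nabla U}_{\Hs}^{3/4}$ reproduce exactly the right-hand side of \eqref{eq:estimate_trilinear_term_osc}. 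In the \emph{high--high regime} $\av{\cn}<\tfrac12\av{\cm}$ one necessarily has $\av{\ck}>\tfrac12\av{\cm}$, hence $\av{\ck}\sim\av{\cm}\gg\av{\cn}$; here (using $s>0$) I would bound $\av{\cn}^{2s+1}\leqslant\av{\cm}^{s}\av{\cn}^{s+1}$ and apply Lemma~\ref{lem:product_rule_osc} to $a=U$, $b=\Lambda^{s}U$, $c=\Lambda^{s+1}U$, getting a bound by $C\,\norm{\Lambda^{1/2}U}_{\2}\norm{\Lambda^{s+1/2}U}_{\2}\norm{\Lambda^{s+1}U}_{\2}$; then $\norm{\Lambda^{1/2}U}_{\2}\leqslant\norm{U}_{\2}^{1/2}\norm{\nabla U}_{\2}^{1/2}$, $\norm{\Lambda^{s+1/2}U}_{\2}\leqslant\norm{U}_{\Hs}^{1/2}\norm{\nabla U}_{\Hs}^{1/2}$ and $\norm{\Lambda^{s+1}U}_{\2}\leqslant\norm{\nabla U}_{\Hs}$ again give the claimed bound. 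Summing the two regimes concludes the proof.

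The only genuinely non-mechanical point — and hence the step I expect to be the main obstacle — is the exponent bookkeeping in the two regimes: in each case one must check that the weight $\av{\cn}^{2s+1}$ produced by the divergence and by the $\Hs$ pairing can be split among the three frequency slots of Lemma~\ref{lem:product_rule_osc} in such a way that (i) it is pointwise dominated by the chosen product of powers of $\av{\ck},\av{\cm},\av{\cn}$; (ii) it respects the available regularity budget — each of the two ``$H^{1/2}$ slots'' may carry at most $s+\tfrac12$ derivatives of $U$ and the ``$L^2$ slot'' at most $s+1$, since on the right-hand side $U$ is controlled only in $\2$ and $H^{s+1}$; and (iii) the resulting homogeneous norms interpolate back precisely into $\norm{U}_{\2}^{1/2}\norm{\nabla U}_{\2}^{1/2}\norm{U}_{\Hs}^{1/2}\norm{\nabla U}_{\Hs}^{3/2}$. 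The choices of $(b,c)$ above show that these three constraints are simultaneously met; everything else — the Fourier expansion, the $k\leftrightarrow n-k$ symmetrization, and the Sobolev interpolation for zero-average fields on $\T^3$ — is routine.
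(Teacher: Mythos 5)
Your proof is correct and uses the same two key ingredients as the paper — the rewriting of $\pare{\widetilde{\cQ}_1 \pare{U_{\osc}, U_{\osc}}}_{\osc}$ in divergence form localized on $\mathcal{K}^\star$, and the resonant trilinear estimate of Lemma~\ref{lem:product_rule_osc} — but the frequency bookkeeping is done differently. The paper runs a Littlewood--Paley decomposition of the trilinear form, applies the (Paicu--)Bony splitting inside each dyadic block $\tq$, and then invokes Lemma~\ref{lem:product_rule_osc} block by block together with Bernstein-type estimates, summing the resulting $2^{-2qs}b_q$-sequence. You instead symmetrize directly in Fourier under $k\mapsto n-k$ (which you correctly observe preserves $\mathcal{K}^\star$, merely permuting $a,b$) and split the summation set into a ``low--high'' regime $\av{\cn}\sim\av{\cm}$ and a ``high--high'' regime $\av{\ck}\sim\av{\cm}\gg\av{\cn}$, distribute the weight $\av{\cn}^{2s+1}$ across the three slots of Lemma~\ref{lem:product_rule_osc} with a different power split in each regime, and recover the target bound by $L^2$-interpolation of the resulting homogeneous norms (legitimate here since $U_{\osc}$ has zero horizontal — hence global — average, making homogeneous and inhomogeneous norms equivalent on $\T^3$). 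Both routes reach the same interpolation exponents $\tfrac12,\tfrac12,\tfrac12,\tfrac32$; your version avoids the almost-orthogonality and paraproduct bookkeeping of the dyadic decomposition at the cost of hand-checking the pointwise weight splittings, which you do carefully. One small point worth flagging explicitly in a write-up: Lemma~\ref{lem:product_rule_osc} is stated for the full sum over $\mathcal{K}^\star$, but its proof starts by passing to absolute values, so it applies verbatim to any subset of $\mathcal{K}^\star$ with arbitrary sign choices — this is what licenses your two-regime restriction and your use of $\av{\widehat{U}}$.
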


\begin{proof}

We remark that
\begin{align*}
	\ps{\pare{\widetilde{\cQ}_1 \pare{U_{\osc}, U_{\osc}}}_{\osc}}{U_{\osc}}_{\Hs} & = \ps{\widetilde{\cQ}_1 \pare{U_{\osc}, U_{\osc}}}{U_{\osc}}_{\Hs}, \\
	& = - \ps{\chi_{\mathcal{K}^\star}\pare{D} \pare{U_{\osc}\otimes U_{\osc}}}{\nabla U_{\osc}}_{\Hs}, \\
 	& = - \psc{ \pare{-\Delta}^{s/2} \pare{U_{\osc}\otimes U_{\osc}}}{ \pare{-\Delta}^{s/2}\nabla U_{\osc}}_{\chi_{\mathcal{K}^\star}}, 
\end{align*}
where
\begin{equation*}
	\psc{a\ b}{c}_{\chi_{\mathcal{K}^\star}} = \sum_{(k,n)\in \mathcal{K}^\star } \widehat{a}(k) \widehat{b}\left( {n-k} \right) \widehat{c}(n).
\end{equation*}
By a dyadic decomposition, we also have
\begin{equation*}
	\av{\psc{ \pare{-\Delta}^{s/2} \pare{U_{\osc}\otimes U_{\osc}}}{ \pare{-\Delta}^{s/2}\nabla U_{\osc}}_{\chi_{\mathcal{K}^\star}}}\sim \sum_q 2 ^{2qs}  \ \av{ \psc{ \tq \pare{U_{\osc}\otimes U_{\osc}}}{ \tq\nabla U_{\osc}}_{\chi_{\mathcal{K}^\star}}}. 
\end{equation*}
For each dyadic bloc in the above estimate, using a Bony decomposition, we have
\begin{equation*}
	I_q = \av{ \psc{ \tq \pare{U_{\osc}\otimes U_{\osc}}}{ \tq\nabla U_{\osc}}_{\chi_{\mathcal{K}^\star}}} \leqslant I_q^1 + I_q^2,
\end{equation*}
where
\begin{align*}
	I_q^1 &= \sumf \av{ \psc{ \tq \pare{S_{q'}U_{\osc}\otimes \Tq U_{\osc}}}{ \tq\nabla U_{\osc}}_{\chi_{\mathcal{K}^\star}}}\\
	I_q^2 &= \sumi \av{ \psc{ \tq \pare{\Tq U_{\osc}\otimes S_{q'+2} U_{\osc}}}{ \tq\nabla U_{\osc}}_{\chi_{\mathcal{K}^\star}}}.
\end{align*}
Combining \eqref{eq:product_rule_osc} with some classical computations with the dyadic blocs finaly leads to, for any $k=1,2$,
\begin{equation*}
	I_q^k \leqslant C \ 2^{-2qs}b_q \ \norm{U_{\osc}}_{\2}^{1/2} \norm{\nabla U_{\osc}}_{\2}^{1/2}\norm{U_{\osc}}_{\Hs}^{1/2} \norm{\nabla U_{\osc}}_{\Hs}^{3/2}, 
\end{equation*}
where the sequence $ \pare{b_q}_q\in \ell^2 $ depends on $ U_{\osc} $, concluding the proof.

\end{proof}

\begin{lemma}
	\label{lem:613}
	Let $ s >1/2 $,  then
	\begin{align}
		\ps{\widetilde{\cQ}_1\pare{\bU, U_{\osc}}}{U_{\osc}}_{\Hs}&  \leqslant C \norm{\nabla\bU}_{\Hs}\norm{\nabla U_{\osc}}_{\Hs}\norm{ U_{\osc}}_{\Hs},\label{eq:estimate_linear_term_osc1} \\
		\ps{\cB\pare{\uU, U_{\osc}}}{U_{\osc}}_{\Hs} & \leqslant C \norm{\uU}_{L^2\pare{\T^1_{\v}}} \norm{\nabla U_{\osc}}_{\Hs}\norm{ U_{\osc}}_{\Hs}.\label{eq:estimate_linear_term_osc2}
	\end{align}
\end{lemma}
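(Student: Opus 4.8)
The plan is to prove the two bounds separately; in each case I would write out the $\Hs$-scalar product in Fourier variables, pass to the moduli of all the Fourier coefficients involved (this leaves every Sobolev norm unchanged and, in the first case, simply discards the resonance constraint $\omega^{a,b,c}_{k,m,n}=0$ of \eqref{eq:limit_cQ1} at the cost of a fixed constant), and thereby reduce everything to products in Sobolev spaces. The guiding observation is that $\widetilde{\cQ}_1$ and $\cB$ are localizations of transport nonlinearities, hence each carries one derivative, and Cauchy--Schwarz lets one place that derivative on the factor where it is cheapest.

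\emph{Inequality \eqref{eq:estimate_linear_term_osc1}.} Starting from \eqref{eq:limit_cQ1} I would use $\av{\widehat{\bP}(n)}\leqslant 1$, the eigenvectors being unit and orthogonal to their frequency, and $\av{\bU^a(k)}\leqslant\av{\widehat{\bU}(k)}$, $\av{U_{\osc}^b(m)}\leqslant\av{\widehat{U_{\osc}}(m)}$, to bound the bilinear symbol of \eqref{eq:limit_cQ1} (with $n=k+m$) pointwise by $\av{\widehat{\bU}(k)}\av{\widehat{\nabla U_{\osc}}(m)}+\av{\widehat{\nabla\bU}(k)}\av{\widehat{U_{\osc}}(m)}$. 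Writing the $\Hs$-scalar product in Fourier, passing to moduli and invoking the product estimate $H^{s+1}\pare{\T^3}\cdot\Hs\hookrightarrow\Hs$ (which holds since $s>1/2$, $\Hs$ being a module over the algebra $H^{s+1}$), this yields
\begin{equation*}
	\left| \ps{\widetilde{\cQ}_1\pare{\bU, U_{\osc}}}{U_{\osc}}_{\Hs} \right| \leqslant C\Big( \norm{\bU}_{H^{s+1}\pare{\T^3}}\norm{\nabla U_{\osc}}_{\Hs} + \norm{\nabla\bU}_{\Hs}\norm{U_{\osc}}_{H^{s+1}\pare{\T^3}} \Big)\norm{U_{\osc}}_{\Hs}.
\end{equation*}
Since $\bU$ and $U_{\osc}$ have zero average and, in addition, the horizontal frequencies of $\bU$ are bounded away from the origin, one has $\norm{\bU}_{H^{s+1}}\leqslant C\norm{\nabla\bU}_{\Hs}$ and $\norm{U_{\osc}}_{H^{s+1}}\leqslant C\norm{\nabla U_{\osc}}_{\Hs}$, which gives \eqref{eq:estimate_linear_term_osc1}.

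\emph{Inequality \eqref{eq:estimate_linear_term_osc2}.} Here I would use the explicit formula \eqref{eq:tQ2_e+-} for $\cB\pare{\uU, U_{\osc}}$, whose structure is decisive: in every summand the $\uU$-factor sits at the vertical frequency $2n_3$ and the two $U_{\osc}$-factors at the frequencies $(n_h,-n_3)$ and $(n_h,n_3)$, which have the same modulus $\pare{1+\av{\cn}^2}^{1/2}$. Using again the orthogonality of the eigenvectors to their frequency, the symbol in \eqref{eq:tQ2_e+-} is bounded by $C\,\av{\widehat{\uU}(2n_3)}\,\av{\widehat{\nabla U_{\osc}}(n_h,-n_3)}$, so the derivative falls \emph{only} on $U_{\osc}$ and the $\uU$-coefficient is never differentiated. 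Writing out the $\Hs$-scalar product in Fourier, passing to moduli, sharing the weight $\pare{1+\av{\cn}^2}^{s}$ equally between the two $U_{\osc}$-factors and applying Cauchy--Schwarz in $(n_h,n_3)$, I would obtain
\begin{equation*}
	\left| \ps{\cB\pare{\uU, U_{\osc}}}{U_{\osc}}_{\Hs} \right| \leqslant C\pare{\sup_{n_3\in\bZ}\av{\widehat{\uU}(2n_3)}}\norm{\nabla U_{\osc}}_{\Hs}\norm{U_{\osc}}_{\Hs},
\end{equation*}
and one concludes with $\displaystyle\sup_{n_3\in\bZ}\av{\widehat{\uU}(2n_3)}\leqslant\Big(\sum_{m_3\in\bZ}\av{\widehat{\uU}(m_3)}^2\Big)^{1/2}=\norm{\uU}_{L^2\pare{\T^1_{\v}}}$.

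The only genuine difficulty is the low-regularity regime $s>1/2$, in which $\Hs$ itself is not an algebra: this is precisely why, for \eqref{eq:estimate_linear_term_osc1}, one must estimate the ``product part'' one full derivative higher, in $H^{s+1}\pare{\T^3}$ --- which is an algebra exactly when $s>1/2$ --- the missing derivative being supplied by the gain $\norm{\bU}_{H^{s+1}}\sim\norm{\nabla\bU}_{\Hs}$ on the right-hand side. For \eqref{eq:estimate_linear_term_osc2} the delicate point is rather that \emph{only} $\norm{\uU}_{L^2(\T^1_{\v})}$, and no Sobolev norm of $\uU$, may appear on the right-hand side --- this is what makes the term $T\norm{\uU_0}^2_{L^2(\T^1_{\v})}$ in \eqref{eq:E3} finite for all $T$ --- which the non-local structure of $\cB$ delivers, $\uU$ never carrying a derivative and being paired, at a single frequency, against a square-summable family.
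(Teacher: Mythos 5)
Your proof is correct and follows essentially the same route as the paper. For \eqref{eq:estimate_linear_term_osc1} the paper argues in one line, bounding $\av{\ps{\widetilde{\cQ}_1(\bU,U_{\osc})}{U_{\osc}}_{\Hs}}$ by $\norm{\bU\otimes U_{\osc}}_{H^{s+1}}\norm{U_{\osc}}_{\Hs}$ and then invoking the $H^{s+1}$ algebra property directly on the tensor product, whereas you first split $\dive(\bU\otimes U_{\osc})$ via Leibniz into $\bU\cdot\nabla U_{\osc}+U_{\osc}\cdot\nabla\bU$ and use the product law $H^{s+1}\cdot H^s\hookrightarrow H^s$ plus the Poincar\'e gain $\norm{\cdot}_{H^{s+1}}\lesssim\norm{\nabla\cdot}_{\Hs}$; both give the same conclusion under the same hypothesis $s>1/2$. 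For \eqref{eq:estimate_linear_term_osc2} your argument unpacks precisely what the paper's terse two-line commutation identity $\ps{\cB(\uU,U_{\osc})}{U_{\osc}}_{\Hs}=\ps{\cB(\uU,(-\Delta)^{s/2}U_{\osc})}{(-\Delta)^{s/2}U_{\osc}}_{\2}$ is encoding: since the two $U_{\osc}$-frequencies $(n_h,\pm n_3)$ in \eqref{eq:tQ2_e+-} have the same modulus, the $\Hs$-weight can be shared equally, and the $\uU$-coefficient is picked up via $\sup_{n_3}|\widehat{\uU}(2n_3)|\leqslant\norm{\uU}_{L^2(\T^1_{\v})}$. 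No gap.
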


\begin{proof}

The proof of Lemma \ref{lem:613} relies on direct estimates performed on both bilinear terms. For the first one, we have
\begin{align*}
	\av{\ps{\widetilde{\cQ}_1\pare{\bU, U_{\osc}}}{U_{\osc}}_{\Hs}}& \leqslant \av{\ps{\dive \pare{\bU\otimes U_{\osc}}}{U\osc}_{\Hs}}, \\
	& \leqslant \norm{\bU\otimes U_{\osc}}_{H^{s+1}\pare{\T^3}} \norm{U_{\osc}}_{\Hs},\\
 	& \leqslant C \norm{\nabla\bU}_{\Hs}\norm{\nabla U_{\osc}}_{\Hs}\norm{ U_{\osc}}_{\Hs}, 
\end{align*}
where in the last inequality, we used the fact that $ H^{s+1}\pare{\T^3}, s>1/2 $ is a Banach algebra.

For the second one we use the explicit definition of the limit bilinear form $ \mathcal{B} $ given in \eqref{eq:tQ2_e+-} in order to deduce the identity
\begin{align*}
	\ps{\cB\pare{\uU, U_{\osc}}}{U_{\osc}}_{\Hs} & = \ps{\pare{-\Delta}^{s/2}\cB\pare{\uU, U_{\osc}}}{\pare{-\Delta}^{s/2} U_{\osc}}_{\2}, \\
	& = \ps{\cB\pare{\uU, \pare{-\Delta}^{s/2} U_{\osc}}}{\pare{-\Delta}^{s/2} U_{\osc}}_{\2}, 
\end{align*}
which implies inequality \eqref{eq:estimate_linear_term_osc2}.

\end{proof}

\noindent \textit{Proof of Proposition \ref{pr:global_Hs_uosc}.} We have now all the ingredients to prove Proposition \ref{pr:global_Hs_uosc}. Performing rather standard $\Hs$-energy estimates on the equation \eqref{eq:lim_Uosc} with the energy bounds \eqref{eq:estimate_trilinear_term_osc}, \eqref{eq:estimate_linear_term_osc1} and \eqref{eq:estimate_linear_term_osc2}, we obtain
\begin{multline*}
	\frac{1}{2}\frac{d}{dt} \norm{U_{\osc}\pare{t}}_{\Hs}^2 + \nu \int_0^t \norm{\nabla U_{\osc} \pare{\tau}}_{\Hs}^2 d\tau \\
	\leqslant C \pare{ \norm{\nabla\bU}_{\Hs} + \norm{\uU}_{L^2\pare{\T^1_{\v}}} }\norm{\nabla U_{\osc}}_{\Hs}\norm{ U_{\osc}}_{\Hs} \\
	+ \norm{U_{\osc}}_{\2}^{1/2} \norm{\nabla U_{\osc}}_{\2}^{1/2}\norm{U_{\osc}}_{\Hs}^{1/2} \norm{\nabla U_{\osc}}_{\Hs}^{3/2}
\end{multline*}
Then, Young inequality and Gronwall lemma imply
\begin{multline*}
	\norm{U_{\osc}\pare{\tau}}_{\Hs}^2 + \nu \int_0^t \norm{\nabla U_{\osc}\pare{\tau}}_{\Hs}^2d\tau\\
	\leqslant \norm{U_{\osc, 0}}^2_{\Hs} \exp\left\{ \int_0^t \norm{\nabla\uh\pare{\tau}}_{\Hs}^2d\tau + \int_0^t\norm{\uU\pare{\tau}}_{L^2\pare{\T^1_{\v}}}^2d\tau\right.\\
	+ \left.\int_0^t \norm{U_{\osc}\pare{\tau}}_{\2}^2\norm{\nabla U_{\osc}\pare{\tau}}_{\2}^2 d\tau \right\}.
\end{multline*}
Thus, using Estimates \eqref{eq:stong_Hs_bound_ubar}, \eqref{eq:L2_bound_Uosc} and the result in Proposition \ref{pr:global_Hs_Uunder}, we deduce that, for each $ T> 0 $, the following bound holds true
\begin{multline*}
	\norm{U_{\osc}\pare{\tau}}_{\Hs}^2 + \nu \int_0^t \norm{\nabla U_{\osc}\pare{\tau}}_{\Hs}^2d\tau \\
	\begin{aligned}
		\leqslant & \  \norm{U_{\osc, 0}}^2_{\Hs} \exp\set{ \frac{1}{\nu}\ \mathcal{E}_1\pare{U_0} + T\ \norm{\uU_0}^2_{L^2\pare{\T^1_{\v}}} + \frac{1}{\nu} \pare{ \cE_{2, U_0}\pare{T} }^2 }, \\
		\leqslant &  \ C_\nu \exp\set{C_\nu \exp\set{C_\nu T}},
	\end{aligned}
\end{multline*}
where $ \mathcal{E}_1 $ and $ \mathcal{E}_2 $ are respectively defined in \eqref{eq:E1} and \eqref{eq:E2}. 
\hfill$ \Box $

\section{Convergence as $ \varepsilon\to 0 $ and proof of the main result}

As in the work \cite{Gallagher_incompressible_limit}, the lack of a complete parabolic smoothing effect on the system \eqref{PBSe} will prevent us to obtain a uniform global-in-time control for $ U^\varepsilon $. Nonetheless we will be able to prove that, for each $ T>0 $ arbitrary and $ \varepsilon > 0 $, the solutions of \eqref{eq:filt-sys} belong to the space $ \cC_{\loc} \pare{\R_+; H^{s-2}\pare{\T^3}} $ for $ s>9/2 $ and converge in the same topology to the global solution of \eqref{eq:limit_system}. The idea to prove this convergence result is to use the method of Schochet (see \cite{schochet}), which consists in a smart change of variable, which cancels some perturbations that we cannot control. We will use results and terminology introduced by I. Gallagher in \cite{Gallagher_singular_hyperbolic} in the context of quasilinear hyperbolic symmetric systems with skew-symmetric singular perturbation.

Let us recall the following definition \cite[Definition 1.2]{Gallagher_singular_hyperbolic}
\begin{definition} \label{def:oscillating_functions}
	Let $ T, \varepsilon_0> 0, \ p\geqslant 1 $ and $ \sigma > d/2 $. Let $ \overrightarrow{k_q}= \pare{k_1, \ldots , k_q} $ where $ k_i\in\bZ^d $ and let
	\begin{equation*}
		\av{\overrightarrow{k_q}}= \max_{1\leqslant i \leqslant q} \av{k_i}.
	\end{equation*}
	Then a function $ R^\varepsilon_{\osc}\pare{t} $ is said to be \textnormal{$ \pare{p, \sigma} $-- oscillating function} if it can be written as
	\begin{equation*}
		R^\varepsilon_{\osc} = \sum_{q=1}^p R^\varepsilon_{q, \osc}\pare{t},
	\end{equation*}
	where
	\begin{equation*}
		R^\varepsilon_{q, \osc}\pare{t} = \cF^{-1} \pare{\sum_{\overrightarrow{k_q} \in K^n_q} e^{i\frac{t}{\varepsilon}\beta_q \pare{n, \overrightarrow{k_q}} } r_0\pare{n , \overrightarrow{k_q}} f^\varepsilon_1 \pare{t, k_1}\ldots f^\varepsilon_q \pare{t, k_q} },  
	\end{equation*}
	with
	\begin{equation*}
		K^n_q = \set{ \overrightarrow{k_q}\in \bZ^{dq} \left\vert \ \sum_{i=1}^q k_i =n \text{ and } \beta_q\pare{n, \overrightarrow{k_q}}\neq 0 \right. },
	\end{equation*} 
	and where $r_0$ and $f^\eps_i$ satisfy
	\begin{itemize}
		\item there exist $ \pare{\alpha_i}_{i\in\set{1, \ldots , q}}, \ \alpha_i \geqslant 0 $ such that
			\begin{align*}
				r_0\pare{n , \overrightarrow{k_q}} \leqslant C \prod_{i=1}^q \pare{1+ \av{k_i}}^{\alpha_i}, 
			\end{align*}
		\item $ \pare{\cF^{-1} f_i^\varepsilon}_{0<\varepsilon<\varepsilon_0} $ is uniformly bounded in $ \cC \pare{[0, T]; H^{\sigma + \alpha_i}\pare{\T^d}} $, for any $i \in \set{1, \ldots, q}$,
		\item there exists a $ \sigma_i >-\sigma $ for which, $ \pare{\cF^{-1} \partial_t f_i^\varepsilon}_{0<\varepsilon<\varepsilon_0} $ is uniformly bounded in $ \cC \pare{[0, T]; H^{\sigma_i}\pare{\T^d}} $.
	\end{itemize}
\end{definition}

\noindent The abstract concept in Definition \ref{def:oscillating_functions} is required in order to introduce the following result, see \cite[Lemma 2.1]{Gallagher_singular_hyperbolic} or \cite[Lemma 2.1]{Gallagher_incompressible_limit} for more details.
\begin{lemma}\label{lem:schochet_abstract}
	Let $ T>0 $ and $ \sigma > \dfrac{d}{2} +2 $, let $ \pare{b^\varepsilon}_{\varepsilon} $ be a family of functions, bounded in $ \cC\pare{[0, T]; H^\sigma \pare{\T^d}} $ and let $ a_0^\varepsilon\to 0 $ as $ \varepsilon \to 0 $ in $ H^{\sigma-1}\pare{\T^d} $. Let $ \cQ^\varepsilon, \ \cA^\varepsilon_2 $ be as in \eqref{eq:def_Qeps}, \eqref{eq:def_A2eps}, let $ R^\varepsilon_{\osc} $ be a $ \pare{p, \sigma-1} $--oscillating function and finaly let $ F^\varepsilon\to 0 $ as $ \varepsilon \to 0 $ in $ \cC \pare{[0, T]; H^{\sigma-1}\pare{\T^d}} $. Then the function $ a^\varepsilon $, solution of 
	\begin{equation*}
		\left\lbrace
		\begin{aligned}
			& \partial_t a^\varepsilon +\cQ^\varepsilon \pare{a^\varepsilon, b^\varepsilon}- \cA^\varepsilon_2 \pare{D} a^\varepsilon= R^\varepsilon_{\osc} + F^\varepsilon, \\
 			&\left. a^\varepsilon\right\vert _{t=0} = a^\varepsilon_0,
		\end{aligned}
		\right.
	\end{equation*}
	is an $ o_\varepsilon\pare{1} $ in the $ \cC \pare{[0, T]; H^{\sigma-1}\pare{\T^d}} $ topology.
\end{lemma}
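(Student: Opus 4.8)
The proof follows the now-classical scheme of Schochet \cite{schochet}, in the abstract form isolated by Gallagher in \cite[Lemma 2.1]{Gallagher_singular_hyperbolic} (see also \cite[Lemma 2.1]{Gallagher_incompressible_limit}): since $a^\varepsilon$ solves a \emph{linear} equation in $a^\varepsilon$ with a bounded (but merely oscillating) source $R^\varepsilon_{\osc}$, a Gronwall argument first shows that $(a^\varepsilon)_\varepsilon$ is bounded in $\cC([0,T];H^{\sigma-1})$; the point is then to upgrade ``bounded'' to ``$o_\varepsilon(1)$''. One does so by removing $R^\varepsilon_{\osc}$ through a time primitive which is itself of size $\varepsilon$, and by closing an energy estimate on the corrected unknown. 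I only sketch the three steps, the bookkeeping being carried out in detail in the quoted references.

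\emph{Step 1: an $\varepsilon$-small primitive of $R^\varepsilon_{\osc}$.} For every $q\in\set{1,\dots,p}$ and every $\overrightarrow{k_q}\in K^n_q$ the phase $\beta_q(n,\overrightarrow{k_q})$ is nonzero by definition of $K^n_q$, whence the elementary identity $e^{i\frac t\varepsilon\beta_q}=\frac{\varepsilon}{i\beta_q}\,\partial_t e^{i\frac t\varepsilon\beta_q}$. I therefore set
\begin{equation*}
	\Psi^\varepsilon(t)=\varepsilon\sum_{q=1}^{p}\cF^{-1}\!\left(\sum_{\overrightarrow{k_q}\in K^n_q}\frac{1}{i\beta_q(n,\overrightarrow{k_q})}\,e^{i\frac t\varepsilon\beta_q(n,\overrightarrow{k_q})}\,r_0(n,\overrightarrow{k_q})\,f^\varepsilon_1(t,k_1)\cdots f^\varepsilon_q(t,k_q)\right),
\end{equation*}
so that $\partial_t\Psi^\varepsilon=R^\varepsilon_{\osc}+\rho^\varepsilon$, where $\rho^\varepsilon$ collects the terms in which the time derivative falls on one of the factors $f^\varepsilon_i$. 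Using the hypotheses of Definition \ref{def:oscillating_functions} — $r_0(n,\overrightarrow{k_q})\leqslant C\prod_i(1+|k_i|)^{\alpha_i}$, the uniform bounds of $(\cF^{-1}f^\varepsilon_i)$ in $\cC([0,T];H^{\sigma-1+\alpha_i})$ and of $(\cF^{-1}\partial_t f^\varepsilon_i)$ in $\cC([0,T];H^{\sigma_i})$ with $\sigma_i>-(\sigma-1)$ — together with $\sigma-1>d/2$, which makes the relevant Sobolev spaces stable under the product $f^\varepsilon_1\cdots f^\varepsilon_q$, one checks that, \emph{after a frequency truncation}, $\Psi^\varepsilon$ and $\rho^\varepsilon$ tend to $0$ in $\cC([0,T];H^{\sigma-1})$. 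The truncation is the genuinely delicate ingredient and is discussed at the end.

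\emph{Step 2: the corrected unknown.} Set $\widetilde a^\varepsilon=a^\varepsilon-\Psi^\varepsilon$. Then $\widetilde a^\varepsilon|_{t=0}=a^\varepsilon_0-\Psi^\varepsilon(0)\to0$ in $H^{\sigma-1}$ and
\begin{equation*}
	\partial_t\widetilde a^\varepsilon+\cQ^\varepsilon(\widetilde a^\varepsilon,b^\varepsilon)-\cA^\varepsilon_2(D)\widetilde a^\varepsilon=F^\varepsilon-\rho^\varepsilon-\cQ^\varepsilon(\Psi^\varepsilon,b^\varepsilon)+\cA^\varepsilon_2(D)\Psi^\varepsilon=:G^\varepsilon.
\end{equation*}
Since $\Psi^\varepsilon$ is $O(\varepsilon)$, $b^\varepsilon$ is bounded, $\cQ^\varepsilon$ is of transport type and $\cA^\varepsilon_2$ of order two — both being conjugates of fixed operators by the isometry group $\cL$, hence mapping oscillating functions to oscillating functions — the terms $\cQ^\varepsilon(\Psi^\varepsilon,b^\varepsilon)$ and $\cA^\varepsilon_2(D)\Psi^\varepsilon$ are again $\varepsilon$-small oscillating functions. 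Applying the construction of Step 1 to them (finitely many iterations, each gaining a power of $\varepsilon$) one may, after modifying $\Psi^\varepsilon$ by a further $o(\varepsilon)$ corrector, assume that $G^\varepsilon\to0$ in $\cC([0,T];H^{\sigma-1})$.

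\emph{Step 3: energy estimate and conclusion.} Taking the $H^{\sigma-1}$-scalar product of the equation for $\widetilde a^\varepsilon$ with $\widetilde a^\varepsilon$, using that $\cL(\pm t/\varepsilon)$ is an isometry on every $H^\sigma$ so that the diffusive term $\ps{\cA^\varepsilon_2(D)\widetilde a^\varepsilon}{\widetilde a^\varepsilon}_{H^{\sigma-1}}$ contributes a nonpositive quantity, and the tame transport/commutator estimate $\av{\ps{\cQ^\varepsilon(\widetilde a^\varepsilon,b^\varepsilon)}{\widetilde a^\varepsilon}_{H^{\sigma-1}}}\leqslant C\norm{b^\varepsilon}_{H^\sigma}\norm{\widetilde a^\varepsilon}_{H^{\sigma-1}}^2$ (valid since $\sigma-1>d/2+1$), Gronwall's lemma yields
\begin{equation*}
	\norm{\widetilde a^\varepsilon}_{\cC([0,T];H^{\sigma-1})}\leqslant\Big(\norm{a^\varepsilon_0-\Psi^\varepsilon(0)}_{H^{\sigma-1}}+\norm{G^\varepsilon}_{L^1([0,T];H^{\sigma-1})}\Big)\exp\Big(C\!\int_0^T\norm{b^\varepsilon(\tau)}_{H^\sigma}d\tau\Big)\xrightarrow[\varepsilon\to0]{}0.
\end{equation*}
Since $\Psi^\varepsilon\to0$ in $\cC([0,T];H^{\sigma-1})$ as well, $a^\varepsilon=\widetilde a^\varepsilon+\Psi^\varepsilon=o_\varepsilon(1)$ in $\cC([0,T];H^{\sigma-1})$, as claimed. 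The only genuine obstacle is the small-divisor issue of Step 1: the phases $\beta_q(n,\overrightarrow{k_q})$, though nonzero on $K^n_q$, need not be bounded away from $0$, so the coefficients $1/\beta_q$ in $\Psi^\varepsilon$ are unbounded. One cures this by truncating each inner sum to $|\overrightarrow{k_q}|\leqslant N$ — on this finite index set $|\beta_q|\geqslant c_N>0$, so the truncated corrector is $O(\varepsilon/c_N)$ and vanishes for fixed $N$ — while the discarded tail is controlled by some $\eta(N)\to0$ uniformly in $\varepsilon$, thanks to the excess regularity of the factors $f^\varepsilon_i$ over the polynomial growth of $r_0$ and to $\sigma-1>d/2$; a diagonal $\varepsilon$--$N$ argument then concludes. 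Everything else is a routine quasilinear symmetric-hyperbolic energy estimate, made transparent by the unitarity of the filtering group $\cL$.
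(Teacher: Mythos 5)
The paper does not prove this lemma; it is stated as a black box with a pointer to \cite[Lemma 2.1]{Gallagher_singular_hyperbolic} and \cite[Lemma 2.1]{Gallagher_incompressible_limit}. Your reconstruction is a correct rendering of Gallagher's argument — integrate the oscillating phase in time to produce an $\varepsilon$-small corrector $\Psi^\varepsilon$, pass to $\widetilde a^\varepsilon = a^\varepsilon - \Psi^\varepsilon$, close by Gronwall using the unitarity of $\cL$ — and, importantly, you identify the genuine obstruction (the phases $\beta_q$ are nonzero on $K^n_q$ but not bounded below) and resolve it with the frequency truncation plus diagonal $\varepsilon$--$N$ argument. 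One small remark: the ``finitely many iterations'' you invoke in Step 2 are not needed and would not make sense as stated (since $\cQ^\varepsilon(\Psi^\varepsilon,b^\varepsilon)$ is not itself an oscillating function of the required form, $b^\varepsilon$ having no oscillatory structure); once $\Psi^\varepsilon$ is truncated at frequency $N$, the terms $\cQ^\varepsilon(\Psi^\varepsilon_N,b^\varepsilon)$ and $\cA^\varepsilon_2(D)\Psi^\varepsilon_N$ are already $O(\varepsilon N^2/c_N)$ in $H^{\sigma-1}$, hence vanish as $\varepsilon\to 0$ for fixed $N$ and can be absorbed directly into $F^\varepsilon$ before the diagonal limit, with no further correction.
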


\bigskip

Now, to prove our main result, we subtract \eqref{eq:limit_system} from \eqref{eq:filt-sys}, and we denote the difference unknown by $ W^\varepsilon=U^\varepsilon- {U} $. Some basic algebra calculations lead to the following difference system
\begin{equation} \label{equation_W_schochet_method}
 	\left\lbrace
 	\begin{aligned}
 		&\partial_t W^\varepsilon+ \mathcal{Q}^\varepsilon\left( W^\varepsilon, W^\varepsilon+2 {U} \right) - \cA_2^\varepsilon\pare{D} W^\varepsilon = -\pare{\cR^\varepsilon_{\osc} + \cS^\varepsilon_{\osc}}, \\
 		& \dive W^\varepsilon=0,\\
 		&\left. W^\varepsilon \right|_{t=0}= 0,
 	\end{aligned}
 	\right.
\end{equation}
where
\begin{align*}
	&\cR^\varepsilon_{\osc} = \mathcal{Q}^\varepsilon\left( {U} , {U}\right) - \mathcal{Q} \left(   {U},  {U} \right), \\
 	&\cS^\varepsilon_{\osc} = - \pare{\cA^\varepsilon_2\pare{D}-\cA^0_2\pare{D}}U.
\end{align*}

\noindent We remark that $ \cR^\varepsilon_{\osc} $ and $ \cS^\varepsilon_{\osc} $ are highly oscillating functions which converge to zero in $ \cD'\pare{\T^3\times \R_+ } $ only. Thanks to the results proved in Section \ref{se:lim_syst}, namely Lemma \ref{le:lim_smooth} and equation \eqref{eq:cQ_cA}, we can compute the explicit value of $ \cR^\varepsilon_{\osc} $ and $ \cS^\varepsilon_{\osc} $ which is given by
\begin{equation*}
	\cR^\varepsilon_{\osc} = \cR^\varepsilon_{\osc, \RN{1}} + \cR^\varepsilon_{\osc, \RN{2}} + \cR^\varepsilon_{\osc, \RN{3}}, 
\end{equation*}
and
\begin{align*}
	\cF \cR^\varepsilon_{\osc, \RN{1}} & =  \sum_{\substack{\omega^{a,b,c}_{k,m,n}\neq 0\\k+m=n \\ k_h, m_h, n_h\neq 0 \\ a,b,c \in \set{ 0,\pm}}} e^{i\frac{t}{\varepsilon} \omega^{a,b,c}_{k,m,n}} \left( \left.  \ \mathbb{P}_n \left(  n, 0 \right) \cdot \bS \left( U^{a} (k)  \otimes  U^b (m)\right) \right| e_c(n) \right)_{\mathbb{C}^4}\; e_c(n), \\
	\cF \cR^\varepsilon_{\osc, \RN{2}} & =  2\sum_{\substack{\pare{0,k_3} + m=n\\ m_h, n_h \neq 0\\ \widetilde{\omega}^{b,c}_{m,n} \neq 0\\b, c =0, \pm\\j=1,2,3}} e^{i\frac{t}{\varepsilon}\widetilde{\omega}^{b,c}_{m,n}} \left( \left.  \ \bP_n \left(  n, 0 \right) \cdot \bS \left( U^{j} (0,k_3)  \otimes  U^b (m)\right) \right| e_c(n) \right)_{\mathbb{C}^4}\; e_c(n), \\
	\cF \cR^\varepsilon_{\osc, \RN{3}} & =    \sum_{\substack{k+m=(0,n_3) \\ k_h, m_h \neq 0\\ \omega^{a, b}_{k, m}\neq 0 \\ a,b \in \set{ 0,\pm}\\ j=1,2,3}} e^{i\frac{t}{\varepsilon}\omega^{a, b}_{k, m}} \left( \left.  \ \mathbb{P}_{(0,n_3)} \left(0,0,  n_3, 0 \right) \cdot \bS \left( \widetilde{V}_1^{a} (k)  \otimes  \widetilde{V}_2^b (m)\right) \right| f_j \right)_{\mathbb{C}^4}\; f_j,\\
	\cF \cS^\varepsilon_{\osc} & = 1_{n_h\neq 0} \sum_{ \substack{\omega^{a,b}_n\neq 0\\a, b=0,\pm}} e^{i\frac{t}{\varepsilon}\omega^{a,b}_n} \left(\left. \cF \cA_2 (n) U^a (n) \right| e_b(n)  \right)_{\mathbb{C}^4} \ e_b(n). 
\end{align*}
The following result is immediate. 
\begin{prop}
	Under the assumption of Theorem \ref{thm:main_result} the function $ \cR^\varepsilon_{\osc} $ is a $ \left( 2, s-1 \right) $--oscillating function, $\cS^\varepsilon_{\osc} $ is a $ \left( 1, s-2 \right) $--oscillating function and hence $ \cR^\varepsilon_{\osc} + \cS^\varepsilon_{\osc} $ is a $ \left( 2, s-2 \right) $--oscillating function. 
\end{prop}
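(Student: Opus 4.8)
The plan is to unwind Definition~\ref{def:oscillating_functions} directly against the four explicit Fourier-side formulas for $\cR^\varepsilon_{\osc,\RN{1}}$, $\cR^\varepsilon_{\osc,\RN{2}}$, $\cR^\varepsilon_{\osc,\RN{3}}$ and $\cS^\varepsilon_{\osc}$ displayed above, reading off in each case the number of factors $q$, the phase $\beta_q$ (always $\varepsilon$--independent), the scalar amplitude bound $r_0$ — hence the admissible exponents $\alpha_i$ — and the ``slot'' functions $f_i^\varepsilon$ (which here do not even depend on $\varepsilon$, since $U$ is the fixed limit solution, so all uniformity in $\varepsilon$ is automatic). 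The one external input I would record at the outset is that, by Propositions~\ref{pr:global_Hs_Uunder}, \ref{pr:global_Hs_ubarh} and \ref{pr:global_Hs_uosc}, the limit solution $U = \uU + \bU + U_{\osc}$ of \eqref{eq:limit_system} belongs to $\cC\pare{\R_+;\Hs}$ with $s>9/2$, whence, reading \eqref{eq:limit_system} backwards and using $\cQ\pare{U,U}\in\cC\pare{\R_+; H^{s-1}\pare{\T^3}}$ and $\cA^0_2(D)U\in\cC\pare{\R_+; H^{s-2}\pare{\T^3}}$, one has $\partial_t U\in\cC\pare{\R_+; H^{s-2}\pare{\T^3}}$. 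I would also note once and for all that every spectral component $U^a(n)$, $U^j(0,n_3)$ entering the formulas is the projection of $\widehat U$ onto a \emph{unit} eigenvector $e_a(n)$ or $f_j$, so $\av{U^a(n)}\leqslant\av{\widehat U(n)}_{\bC^4}$; thus each such component — and its time derivative — inherits the Sobolev regularity of $U$ (for the purely vertical components one also uses the identity \eqref{eq:iso_vert_Hs_uunderline}), and in particular the $1/\av{\cn_h}$ factors carried by the $e_a(n)$ cost nothing.

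\textbf{The term $\cR^\varepsilon_{\osc}$.} I would treat $\cR^\varepsilon_{\osc,\RN{1}}$, $\cR^\varepsilon_{\osc,\RN{2}}$, $\cR^\varepsilon_{\osc,\RN{3}}$ uniformly. Each is bilinear, so $q=2$ with a convolution constraint on the two frequencies; the phases are the nonzero resonance defects $\omega^{a,b,c}_{k,m,n}$, $\widetilde\omega^{b,c}_{m,n}$, $\omega^{a,b}_{k,m}$, so the restriction to the index set $K^n_q$ where $\beta_q\neq0$ is already built in. After pulling the two scalar coefficients $(\widehat U(k)\,|\,e_a(k))$ and $(\widehat U(m)\,|\,e_b(m))$ out of the tensor $\bS\pare{U^a(k)\otimes U^b(m)}$, the only remaining frequency growth comes from a factor $\cn$ (or $\cn_3$) contracted against a unit vector and then hit by the contraction $\widehat\bP$, hence bounded by $\av{\cn}\leqslant C(1+\av{k})(1+\av{m})$; so $r_0\leqslant C(1+\av{k})(1+\av{m})$, i.e. $\alpha_1=\alpha_2=1$, and the two slots are spectral components of $U$. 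Since $U\in\cC\pare{\R_+;\Hs}=\cC\pare{\R_+;H^{(s-1)+1}\pare{\T^3}}$ the boundedness-of-slots condition holds with $\sigma=s-1$, and the time-derivative condition holds because these components lie, together with their $\partial_t$, in $\cC\pare{\R_+;H^{s-2}\pare{\T^3}}$ and $s-2>-(s-1)$. Thus $\cR^\varepsilon_{\osc}$ is a $(2,s-1)$--oscillating function.

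\textbf{The term $\cS^\varepsilon_{\osc}$ and the conclusion.} This term is linear, $q=1$, with phase $\omega^{a,b}_n\neq0$ on the summation set; extracting again the scalar coefficient, the amplitude is the entry $\psca{\cF\cA_2(n)e_a(n)\,|\,e_b(n)}_{\bC^4}$ of the symbol of $\cA_2$, namely $\mathrm{diag}\pare{-\nu\av{\cn}^2,-\nu\av{\cn}^2,-\nu\av{\cn}^2,0}$, so $r_0(n)\leqslant C(1+\av{n})^2$ and $\alpha_1=2$. The single slot being a spectral component of $U$ and $U\in\cC\pare{\R_+;\Hs}=\cC\pare{\R_+;H^{(s-2)+2}\pare{\T^3}}$, the boundedness condition holds with $\sigma=s-2$; the time-derivative condition holds since $\partial_t U\in\cC\pare{\R_+;H^{s-2}\pare{\T^3}}$ and $s-2>-(s-2)$. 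Hence $\cS^\varepsilon_{\osc}$ is a $(1,s-2)$--oscillating function. The final assertion is then bookkeeping: a $(2,s-1)$--oscillating function is a fortiori $(2,s-2)$--oscillating (raise every $\alpha_i$ by $1$, which only relaxes the amplitude bound and keeps $\sigma+\alpha_i$ fixed), a $(1,s-2)$--oscillating function is $(2,s-2)$--oscillating upon appending a vanishing $q=2$ term, and $(2,s-2)$--oscillating functions add; so $\cR^\varepsilon_{\osc}+\cS^\varepsilon_{\osc}$ is $(2,s-2)$--oscillating, which is exactly the input required to invoke Lemma~\ref{lem:schochet_abstract} with $\sigma=s-1$ (legitimate since $s-1>d/2+2=7/2$).

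I do not expect any genuine obstacle: the whole statement is a translation of the explicit resonance formulas into the language of Definition~\ref{def:oscillating_functions}, and all the analytic substance — the global-in-time bounds on $\uU$, $\bU$, $U_{\osc}$ — is already available from Section~\ref{sec:decomposition_limit} and the section on global propagation of smooth data. The only two points I would slow down on are the harmless behaviour of the normalized spectral projections noted above, and the immediate check that the phases $\omega^{a,b,c}_{k,m,n}$, $\widetilde\omega^{b,c}_{m,n}$, $\omega^{a,b}_{k,m}$ and $\omega^{a,b}_n$ are independent of $\varepsilon$, which is clear from their definitions in Section~\ref{sec:filt-sys}.
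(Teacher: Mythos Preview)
Your verification is correct and is precisely the unwinding of Definition~\ref{def:oscillating_functions} against the explicit Fourier formulas that the paper has in mind; the paper itself gives no proof and simply declares the result ``immediate''. One tangential remark: your closing parenthetical about invoking Lemma~\ref{lem:schochet_abstract} with $\sigma=s-1$ is not part of the proposition itself, and the paper in fact applies that lemma with $\sigma=s-2$; this does not affect the correctness of your argument for the proposition.
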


We can now conclude by applying Lemma \ref{lem:schochet_abstract}, with $ \sigma = s-2 $ and with the substitutions
\begin{align*}
	& a^\varepsilon = W^\varepsilon, & b^\varepsilon = W^\varepsilon +2U, \\
	& R^\varepsilon_{\osc} = -\pare{\cR^\varepsilon_{\osc} + \cS^\varepsilon_{\osc}}, & F^\varepsilon =0.
\end{align*}
We deduce that for each $ T\in \left[ 0, T^\star\right) $, the function $ W^\varepsilon $ is an $ o_{\varepsilon} \pare{1} $ function in $ \cC \pare{[0, T]; H^{s-2}} $. Setting hence
\begin{equation*}
	\widetilde{T^\star} = \sup \set{t\in [0, T^\star ) \ \Big\vert \ \norm{U^\varepsilon \pare{t'}}_{H^{s-2}}< K \pare{\Big. \cE_1 \pare{V_0} + \cE_{3, \nu, T}\pare{V_0}}, \ \forall \ t'\in \left[0, t\right]},
\end{equation*}
where $ \cE_1 $ and $ \cE_{3, \nu, T} $ are defined in Proposition \ref{pr:global_Hs_ubarh} and \ref{pr:global_Hs_uosc}, and $ K $ is a positive (possibly large) fixed, finite constant. Since $ W^\varepsilon = U^\varepsilon - U $ we deduce that for any $ t\in\bra{0, \widetilde{T^\star}} $
\begin{align*}
	\norm{U^\varepsilon \pare{t}}_{H^{s-2}}  & \leqslant \norm{U \pare{t}}_{H^{s-2}} + \norm{ W^\varepsilon \pare{t}}_{H^{s-2}}, \\
	& \leqslant \frac{K}{2}  \pare{\Big. \cE_1 \pare{V_0} + \cE_{3, \nu, T}\pare{V_0}} + \frac{1}{2},
\end{align*} 
since 
\begin{align*}
\norm{U \pare{t}}_{H^{s-2}} & \leqslant \norm{\bU\pare{t}}_{H^{s-2}} + \norm{U _{\osc} \pare{t}}_{H^{s-2}}  + \norm{\uU\pare{t}}_{H^{s-2}} , \\
& \leqslant \mathcal{E}_1\pare{V_0} + \mathcal{E}_{3, \nu, T}\pare{V_0} + \norm{\uU_0}_{H^{s-2}}, \\
& \leqslant \frac{K}{2}  \pare{\Big. \cE_1 \pare{V_0} + \cE_{3, \nu, T}\pare{V_0}}, 
\end{align*}
for $ K > 4 $ and $  \norm{W^\varepsilon \pare{t}}_{H^{s-2}}\leqslant 1/2 $ thanks to the result of Lemma \ref{lem:schochet_abstract}. 
Thus, $ \widetilde{T^\star}=T^\star $, and supposing $ T^\star < \infty $, we deduce
\begin{align*}
	\lim _{t\nearrow T^\star} \int_0^t \norm{\nabla U^\varepsilon\pare{t'}}_{L^\infty} dt' & \leqslant \lim _{t\nearrow T^\star} \int_0^t \norm{ U^\varepsilon\pare{t'}}_{H^{s-2}} dt' \\
	& \leqslant K \pare{\Big. \cE_1 \pare{V_0} + \cE_{3, \nu, T}\pare{V_0}} T^\star <\infty,
\end{align*}
which indeed contradicts \eqref{eq:BU_criterion}. We conclude that $ T^\star =\infty $.

\section*{Acknowledgments}

{The research of S.S. is supported by the Basque Government through the BERC 2018-2021 program and by Spanish Ministry of Economy and Competitiveness MINECO through BCAM Severo Ochoa excellence accreditation SEV-2013-0323 and through project MTM2017-82184-R funded by (AEI/FEDER, UE) and acronym "DESFLU".}

\footnotesize{
\providecommand{\bysame}{\leavevmode\hbox to3em{\hrulefill}\thinspace}
\providecommand{\MR}{\relax\ifhmode\unskip\space\fi MR }
\providecommand{\MRhref}[2]{%
  \href{http://www.ams.org/mathscinet-getitem?mr=#1}{#2}
}
\providecommand{\href}[2]{#2}
}
\end{document}